% Preamble <<<
\documentclass[a4paper]{amsart}

%%%%%%%%%%%     Packages
\usepackage{amsmath,amsfonts,amssymb,amsthm,mathrsfs}
\usepackage{color} % use for colouring comments, refs, citations
\usepackage[all]{xy} % sometimes use xymatrix
\usepackage[backref=page]{hyperref} % shows where references where used
\usepackage{enumitem} % allows to change enumerate labels

%%%%%%%%%%%  Page formatting
\setlength{\parindent}{0pt}

%%%%%%%%%%%  Options
 % makes enumerate labels lower case letters
\numberwithin{equation}{section} % numbers equations within sections

%%%%%%%%%%%    Theorems
\theoremstyle{plain}
\newtheorem{thm}{Theorem}[section]
\newtheorem{lem}[thm]{Lemma}
\newtheorem{prop}[thm]{Proposition}
\newtheorem{cor}[thm]{Corollary}
\newtheorem*{thm*}{Theorem}

\theoremstyle{definition}
\newtheorem{defi}[thm]{Definition}
\newtheorem{ex}[thm]{Example}

\theoremstyle{remark}
\newtheorem{remi}[thm]{Remark}

%%%%%%%%%%%%% Main theorems. These have a separate counter.
\theoremstyle{plain}

%%%%%%%%%%%%% Specific shortcuts for hecke.tex
\newcommand{\HGH}{\mathcal{H}_{R}(G,H)} % Hecke algebra
\newcommand{\HGHA}{\mathcal{H}_{R}(G,H,A,\alpha)} % skew Hecke algebra
\newcommand{\HGHSym}{\mathcal{H}_{R}(S_3,S_2)} % Hecke algebra for S_2 < S_3
\newcommand{\HGHASym}{\mathcal{H}_{R}(S_3,S_2,A,\alpha)} % skew Hecke algebra for S_2 < S_3
\newcommand{\eH}{\mathrm{e}_H} % trivial idempotent for H
\newcommand{\eeH}{\mathbf{e}_H} % eH inside skew group ring

%%%%%%%%%%%%%  Math Operators
\newcommand{\IndHG}{\operatorname{Ind}_{H}^{G}}
\newcommand{\CoindHG}{\operatorname{Coind}_{H}^{G}}
\newcommand{\ResHG}{\operatorname{Res}_{H}^{G}}
\DeclareMathOperator{\Ind}{Ind}
\DeclareMathOperator{\Coind}{Coind}
\DeclareMathOperator{\Res}{Res}
\DeclareMathOperator{\Maps}{Maps}
\DeclareMathOperator{\Hom}{Hom}
\DeclareMathOperator{\End}{End}
\DeclareMathOperator{\Aut}{Aut}
\DeclareMathOperator{\Gr}{Gr}

% \DeclareMathoperator* for operators with limits to be typeset underneath
% \operatorname* 

% removes MR numbers from bib entries
% \AtBeginDocument{%
%   \def\MR#1{}
% }

% End of Preamble
% >>>

\begin{document}

% sec: title and abstract and contents <<<
\title{Skew Hecke algebras}
\author{James Waldron}
\address{School of Mathematics and Statistics, Herschel Building, Newcastle University, Newcastle-upon-Tyne, NE1 7RU} 
\email{james.waldron@ncl.ac.uk}
\author{Leon Deryck Loveridge}
\address{Department of Science and Industry Systems, University of South-Eastern Norway, 3616 Kongsberg, Norway}
\address{Okinawa Institute of Science and Technology Graduate University,
1919-1 Tancha, Onna-son, Okinawa 904-0495, Japan}
\email{Leon.D.Loveridge@usn.no}
\date{\today}
\keywords{Skew group rings, Hecke algebras, finite groups.}
\maketitle

\begin{abstract}
Let $G$ be a finite group, $H \le G$ a subgroup, $R$ a commutative ring, $A$ an $R$-algebra, and $\alpha$ an action of $G$ on $A$ by $R$-algebra automorphisms.
We study the associated \emph{skew Hecke algebra} $\mathcal{H}_{R}(G,H,A,\alpha)$, which is the convolution algebra of $H$-invariant functions from $G/H$ to $A$.

We prove for skew Hecke algebras a number of common generalisations of results about skew group algebras and results about Hecke algebras of finite groups.
We show that skew Hecke algebras admit a certain double coset decomposition.
We construct an isomorphism from $\mathcal{H}_{R}(G,H,A,\alpha)$ to the algebra of $G$-invariants in the tensor product $A \otimes \End_{R} ( \Ind_{H}^{G} R )$.
We show that if $|H|$ is a unit in $A$, then $\mathcal{H}_{R}(G,H,A,\alpha)$ is isomorphic to a corner ring inside the skew group algebra $A \rtimes G$.

Alongside our main results, we show that the construction of skew Hecke algebras is compatible with certain group-theoretic operations, restriction and extension of scalars, certain cocycle perturbations of the action, gradings and filtrations, and the formation of opposite algebras.
The main results are illustrated in the case where $G = S_3$, $H = S_2$, and $\alpha$ is the natural permutation action of $S_3$ on the polynomial algebra $R[x_1,x_2,x_3]$.
\end{abstract}

% >>>

% sec: introduction <<<
\section{Introduction}

% subsec: intro skew hecke algebras <<<
\subsection{Skew Hecke algebras}
\label{sec: intro_skew_hecke}
Let $R$ be a commutative ring, $A$ an $R$-algebra, $G$ a finite group, $H \le G$ a subgroup, and $\alpha : G \to \Aut_{R\text{-alg}} (A)$ a group homomorphism from $G$ to the group of $R$-algebra automorphisms of $A$.
The \emph{skew Hecke algebra} $\HGHA$ is by definition the $R$-module
\[
	\Maps(G/H,A)^{H} := \{ \phi : G/H \to A \; | \; \phi(hgH) = \alpha_{h} \phi(gH) \text{ for all } h \in H , \, gH \in G/H \}
\]
equipped with the multiplication
\begin{equation}
\label{eqn: intro_convolution_product}
( \phi * \psi ) (gH) = \sum_{kH \in G/H} \phi(kH) \alpha_{k} \psi(k^{-1}gH) ,
\end{equation}
where the sum is over a set of representatives of the left cosets of $H$ in $G$.
These algebras were introduced by Baker in \cite{Baker1998}, where they are called \emph{twisted Hecke algebras}.
The definition of $\HGHA$ can be extended to the case of infinite $G$ with the assumption that $H$ is a `Hecke subgroup', meaning that each $H$-double coset is the union of finitely many left $H$-cosets.
As special cases, note that if $H = 1$ is the trivial subgroup then
\[
\mathcal{H}_{R} ( G , 1 , A , \alpha ) = A \rtimes G
\]
is the \emph{skew group algebra} \cite{PassmanBook1989} associated to the action $\alpha$ of $G$ on $A$; if $A = R$ and $\alpha = \mathrm{triv}$ is the trivial homomorphism then
\[
\mathcal{H}_{R}( G , H , R , \mathrm{triv} ) = \HGH 
\]
is the \emph{Hecke algebra} \cite{KriegBook1990} with coefficient ring $R$, for which $\HGH \cong R[H \backslash G / H]$ as $R$-modules.

\subsection{Motivation}
\subsubsection{Algebraic topology}
Skew hecke algebras first appeared in work of Baker on algebraic topology.
It is shown in \cite{Baker1998} that
\[
Ell_* \otimes \mathcal{H}_{\mathbb{Q}} ( \mathrm{GL}_2(\mathbb{Q}) , \mathrm{SL}_2(\mathbb{Z}) )
\cong
\mathcal{H}_{\mathbb{Q}} ( \mathrm{GL}_2(\mathbb{Q}) , \mathrm{SL}_2(\mathbb{Z}) , \widetilde{Ell}_* \otimes \mathbb{Q} , \alpha ) ,
\]
where $Ell_*$ and $\widetilde{Ell}_*$ rings of modular forms appearing in the theory of elliptic cohomology and $\alpha$ is an action of $\mathrm{GL}_{2}(\mathbb{Q})$; in \cite{Baker2024} skew Hecke algebras are related to the structure of certain modules related to Morava K-theory.

\subsubsection{C*-algebras}
In the setting of C*-algebras and Fell bundles, Palma has developed a theory of \emph{Hecke crossed products}, see \cite{Palma2012,Palma2013,PalmaBook2018}, extending the theory of Hecke C*-algebras.
The results in \emph{loc.\ cit}.\ are mostly distinct from those in our work, though there are some analogous statements which we will highlight when they arise, see e.g.\ Proposition \ref{mprop: stone} and Remark \ref{rem:main_results_generalisations} below.

\subsubsection{Quantum reference frames}
Our motivation comes from certain constructions in the theory of `quantum reference frames' \cite{BuschLM2018,FewsterJLRW2024,HametteGHLM2021}, where von Neumann algebras of observables of the form
\begin{equation}
\label{eqn:qrf_invariants}
( M \otimes B(L^{2}(G/H)) )^{G}
\end{equation}
arise, for $M$ a von Neumann algebra equipped with an action of a topological group $G$.
One aspect of the theory is the construction of certain `relativisation maps'
\begin{equation}
\label{eqn:qrf_yen}
M^{H} \to ( M \otimes B(L^{2}(G/H)) )^{G}
\end{equation}
as given concretely in \cite{GlowackiLW2024}.
For finite groups acting on general $R$-algebras, the algebra \eqref{eqn:qrf_invariants} is related to skew Hecke algebras by our Theorem \ref{mthm: fixed_points}, and the existence of an algebra homomorphism \eqref{eqn:qrf_yen} is given by Corollary \ref{cor: fixed_points_Yen}.

\subsubsection{Algebraic geometry}
Suppose that $X = \mathrm{Spec} \, S$ is an affine scheme equipped with an action $\alpha$ of a finite group $G$, and $N \le G$ is a normal subgroup.
Then $G/N$ acts naturally on the scheme $X/N = \mathrm{Spec} \, S^{N}$, and there is an associated quotient stack $(X/N) // (G/N)$ which can be understood at the level of quasi-coherent sheaves via the equivalence of categories
\[
\mathrm{QCoh}( (X/N) // (G/N) ) \cong S^{N} \rtimes G/N \mathrm{-Mod} .
\]
If $H \le G$ is not normal then the quotient stack cannot be defined, but we do still have the category
\[
\mathcal{H}_{\mathbb{Z}}(G,H,S,\alpha) \mathrm{-Mod}
\]
of modules over a skew Hecke algebra, and $\mathcal{H}_{\mathbb{Z}}(G,N,S,\alpha) \cong S^{N} \rtimes G/N$ so that this is consistent with the equivalence above.

% >>>

% subsec: intro: main results <<<
\subsection{Main results}
\label{sec: intro_main_results}

\begin{thm}[{Thm.\ \ref{thm: double_coset}}]
\label{mthm: double_coset}
There is an isomorphism of $A^{H}$-bimodules
\begin{align*}
\HGHA & \xrightarrow{\cong} \bigoplus_{HgH} A^{H \cap gHg^{-1}} \\
\phi & \mapsto \left( \phi(g_1H) , \dots , \phi(g_nH) \right)
\end{align*}
where the sum is over a set of representatives of the double cosets of $H$ in $G$.
See \S \ref{sec: R_module_structure} for the definitions of the bimodule structures appearing in the statement.
\end{thm}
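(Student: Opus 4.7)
The approach is to apply orbit--stabiliser to the left $H$-action on $G/H$. The $H$-orbits on $G/H$ are precisely the images of the double cosets $HgH$, and the stabiliser of $gH$ under this action is $H \cap gHg^{-1}$. Fixing representatives $g_1, \ldots, g_n$ of $H \backslash G / H$, an $H$-equivariant map $\phi : G/H \to A$ decomposes as a collection of its restrictions to the orbits $Hg_iH/H$, and each such restriction is determined by the single value $\phi(g_iH)$, subject only to the stabiliser constraint $\phi(g_iH) \in A^{H \cap g_iHg_i^{-1}}$. This gives the stated bijection on underlying sets, with inverse $(a_1, \ldots, a_n) \mapsto \phi$ where $\phi(hg_iH) := \alpha_h(a_i)$; this is well-defined precisely because $a_i$ is fixed by the stabiliser of $g_iH$, and it is $H$-equivariant by construction. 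The two assignments are visibly mutually inverse.

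To upgrade the bijection to an $A^H$-bimodule isomorphism, I would compute the convolution product (\ref{eqn: intro_convolution_product}) applied to $\phi_a \in \HGHA$ supported on $H$ with $\phi_a(H) = a$, for $a \in A^H$. A direct calculation gives $(\phi_a * \psi)(gH) = a\, \psi(gH)$ and $(\psi * \phi_a)(gH) = \psi(gH)\, \alpha_g(a)$. So left multiplication by $A^H$ transports to pointwise multiplication on each summand of $\bigoplus_i A^{H \cap g_iHg_i^{-1}}$, while right multiplication transports to multiplication by $\alpha_{g_i}(a)$ on the $i$-th summand. A short check shows that $\alpha_{g_i}(a) \in A^{H \cap g_iHg_i^{-1}}$ whenever $a \in A^H$, so this right action is well-defined, and it agrees with the bimodule structure to be introduced in \S\ref{sec: R_module_structure}.

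The main obstacle is tracking the twist by $\alpha_{g_i}$ in the right action. The underlying bijection and its $R$-linearity are immediate from orbit--stabiliser, and the compatibility with the left $A^H$-action is straightforward; the substantive bookkeeping lies in matching the $\alpha_{g_i}$-twist that emerges from the convolution formula against the conventions on the direct sum side. Once that correspondence is pinned down, the rest of the proof is routine.
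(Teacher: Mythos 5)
Your proposal is correct and follows essentially the same route as the paper: orbit--stabiliser for the left $H$-action on $G/H$ to get the $R$-module bijection, followed by computing the convolution of $\psi$ with $\delta_{H,a}$ on each side to identify the left action as multiplication by $a$ and the right action as multiplication by $\alpha_{g_i}(a)$ (the paper's single computation $(\delta_{H,a} * \phi * \delta_{H,a'})(gH) = a\,\phi(gH)\,(\alpha_{g}a')$). Your check that $\alpha_{g_i}(a) \in A^{H \cap g_iHg_i^{-1}}$ is exactly the verification the paper carries out just before the theorem when defining the bimodule structure on the summands.
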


\begin{prop}[{Prop.\ \ref{prop:skew_hecke_corner}, Cor.\ \ref{cor:corner_ring}}]
\label{mthm: skew_hecke_corner}
If $|H|$ is a unit in $A$ then
\[
\eeH := \frac{1}{|H|} \sum_{h \in H} 1_{A} \otimes h
\]
is an idempotent in the skew group algebra $A \rtimes G$, and there is an $R$-algebra isomorphism
\[
\HGHA \cong \eeH ( A \rtimes G ) \eeH .
\]
\end{prop}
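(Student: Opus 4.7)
The plan is to exhibit an explicit $R$-linear map $\Phi : \HGHA \to \eeH (A \rtimes G) \eeH$ given by
\[
\Phi(\phi) = \frac{1}{|H|} \sum_{g \in G} \phi(gH) \otimes g,
\]
and verify separately that it is well-defined, bijective, unital, and multiplicative. The idempotency of $\eeH$ itself is immediate: expanding $\eeH^{2}$ yields $\frac{1}{|H|^{2}} \sum_{h_1,h_2 \in H} 1 \otimes h_1 h_2$, which collapses to $\eeH$ upon reindexing by $h = h_1 h_2$.

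For well-definedness of $\Phi$, I would write $\Phi(\phi) = \sum_{g \in G} x_g \otimes g$ with $x_g := |H|^{-1} \phi(gH)$ and observe that the right-corner condition $\Phi(\phi) \eeH = \Phi(\phi)$ is equivalent, after expanding and reindexing, to $x_{gh} = x_g$ for all $h \in H$, which is automatic since $gH = ghH$; and the left-corner condition $\eeH \Phi(\phi) = \Phi(\phi)$ is equivalent to $x_{hg} = \alpha_h(x_g)$, which is exactly the $H$-invariance defining $\HGHA$. Running the same argument in reverse shows that every element of $\eeH(A \rtimes G) \eeH$ takes this form for a unique $\phi$, so $\Phi$ has a two-sided inverse $\Psi$ defined by reading off $|H|$ times the coefficient of $g$.

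The only non-formal point is multiplicativity. Expanding $\Phi(\phi) \Phi(\psi)$ using the product of $A \rtimes G$ and substituting $z = xy$ gives
\[
\Phi(\phi) \Phi(\psi) = \frac{1}{|H|^{2}} \sum_{x, z \in G} \phi(xH) \, \alpha_{x} \psi(x^{-1} z H) \otimes z.
\]
The summand in the convolution formula \eqref{eqn: intro_convolution_product} is constant on the left $H$-coset of its summation variable $k$ (by the $H$-invariance of $\psi$), so the coset sum $\sum_{kH}$ can be replaced by $|H|^{-1} \sum_{k \in G}$; this gives exactly the displayed expression for $\Phi(\phi * \psi)$. Unitality $\Phi(\eH) = \eeH$ is then immediate, since the identity of $\HGHA$ is the function equal to $1_A$ on $H$ and $0$ elsewhere. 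The main obstacle is the bookkeeping: tracking the factors of $|H|^{-1}$ and recognising that the coset-constancy of the convolution summand is precisely what the $H$-invariance condition in the definition of $\HGHA$ is designed to provide.
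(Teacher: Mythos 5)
Your proof is correct and is essentially the paper's own argument for Proposition \ref{prop:skew_hecke_corner}: the paper first identifies $\Maps(G/H,A)^{H}$ with $\Maps(G,A)^{H\times H}$ via $\phi \mapsto (g \mapsto |H|^{-1}\phi(gH))$ (the same $1/|H|$ normalisation, justified by the same coset-sum-to-group-sum conversion) and then identifies the $H\times H$-invariants in $A\rtimes G$ with the corner $\eeH(A\rtimes G)\eeH$ by observing that the two $H$-actions are implemented by left and right multiplication by $1_{A}\otimes h$ --- exactly your two corner conditions. The only genuinely different route in the paper is the alternative proof in Corollary \ref{cor:corner_ring}, which instead deduces the result from $\End_{A\rtimes G}((A\rtimes G)\eeH)^{\mathrm{op}}\cong\eeH(A\rtimes G)\eeH$ and Theorem \ref{thm: fixed_points}.
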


The first isomorphism in the following statement appears in recent work of Baker \cite[App.\ A]{Baker2024}.
Our proof is similar, except that we allow $A$ to be non-commutative and so must distinguish between left and right actions of $A$, and between $A$ and $A^{\mathrm{op}}$.

\begin{thm}[{Thm.\ \ref{thm: fixed_points}}]
\label{mthm: fixed_points}
There are $R$-algebra isomorphisms
\[
\HGHA \cong \End_{A \rtimes G}(A[G/H])^{\mathrm{op}} \cong \left( A \otimes \End_{R}(R[G/H])^{\mathrm{op}} \right)^{G} ,
\]
where $A \rtimes G$ acts on the free $A$-module $A[G/H]$ by $(ag) \cdot (bkH) = (a \alpha_{g}b)gkH$, and $G$ acts on the free $R$-module $R[G/H] \cong \IndHG R$ by $g \cdot kH = gkH$ and by conjugation on $\End_{R}(R[G/H])$.
An explicit isomorphism from $\HGHA$ to $(A \otimes \End_{R}(R[G/H]))^{G}$ is given by the map
\[
\phi \mapsto \sum_{gH,kH} \alpha_{k} (\phi(k^{-1}gH)) \otimes E_{gH,kH}
\]
where $E_{gH,kH}(kH) = gH$ and $E_{gH,kH}(lH) = 0$ for $lH \ne kH$.
\end{thm}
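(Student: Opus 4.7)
The plan is to set $M := A[G/H]$ and unpack $\End_{A\rtimes G}(M)$ in two complementary ways: first as an $A$-valued matrix algebra, to obtain $\HGHA \cong \End_{A\rtimes G}(M)^{\mathrm{op}}$; and then via the standard tensor-product decomposition of endomorphisms of a free $A$-module, to obtain the second isomorphism. The explicit formula is then read off by composition.

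For the first step, since $M$ is a free left $A$-module on $G/H$, any $A$-linear $T$ has a unique expansion $T(kH) = \sum_{gH}\phi(gH,kH) \cdot gH$ for a function $\phi : G/H \times G/H \to A$. Commutativity with the $G$-action translates to the $G$-equivariance $\phi(g_0 gH, g_0 kH) = \alpha_{g_0}(\phi(gH,kH))$. Setting $\psi(lH) := \phi(lH, H)$ and specialising $g_0 = k^{-1}$ gives $\phi(gH,kH) = \alpha_k\psi(k^{-1}gH)$; well-definedness under the replacement $k \mapsto kh$ with $h \in H$ becomes precisely the $H$-equivariance $\psi(hlH) = \alpha_h\psi(lH)$ defining $\HGHA$. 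Computing $(T \circ T')(kH)$ in terms of matrix coefficients and translating back yields the convolution $(\psi' * \psi)(gH)$, so this bijection sends $*$ to reversed composition, i.e.\ it is an $R$-algebra isomorphism $\HGHA \xrightarrow{\cong} \End_{A\rtimes G}(M)^{\mathrm{op}}$.

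For the second step I would use the natural $R$-algebra map
\[
A^{\mathrm{op}} \otimes_R \End_R(N) \to \End_A(A \otimes_R N), \qquad a \otimes E \mapsto \bigl( b \otimes n \mapsto ba \otimes E(n) \bigr),
\]
which is a bijection whenever $N$ is finitely generated free over $R$ (the opposite on $A$ arises because $b$ must commute past $a$ on the left). Taking opposites yields $\End_A(A \otimes_R N)^{\mathrm{op}} \cong A \otimes_R \End_R(N)^{\mathrm{op}}$. Applied with $N = R[G/H]$, the $G$-action induced by the $A\rtimes G$-structure on $A[G/H]$ corresponds to $g \cdot (a \otimes E) = \alpha_g(a) \otimes gEg^{-1}$; since $(-)^G$ commutes with $(-)^{\mathrm{op}}$ for an action by algebra automorphisms, passing to $G$-invariants delivers $\End_{A\rtimes G}(A[G/H])^{\mathrm{op}} \cong (A \otimes \End_R(R[G/H])^{\mathrm{op}})^G$. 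Composing the two isomorphisms and tracing $\psi \in \HGHA$ through gives $T(kH) = \sum_{gH}\alpha_k\psi(k^{-1}gH) \cdot gH$, which under the tensor decomposition is exactly the claimed $\sum_{gH,kH}\alpha_k(\psi(k^{-1}gH)) \otimes E_{gH,kH}$.

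The main obstacle is keeping track of the several "opposite"-flips that occur in tandem: from matrix composition versus Hecke convolution, from the reversal of scalar order in the tensor identification of $\End_A$, and from the interaction of $(-)^{\mathrm{op}}$ with $(-)^G$. Each verification is routine, but consistently aligning variances across the whole chain is where the argument is most likely to require care.
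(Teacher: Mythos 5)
Your proposal is correct and follows essentially the same route as the paper: the paper likewise identifies a $G$-invariant $A$-linear endomorphism with its matrix of coefficients (written there as $\Phi = \sum R_{\Phi_{gH,kH}} E_{gH,kH}$), uses $G$-equivariance to reduce to the first column and recover the $H$-invariance defining $\HGHA$, matches reversed composition with convolution, and obtains the second isomorphism from the identification $\End_A(A\otimes_R N) \cong A^{\mathrm{op}}\otimes_R\End_R(N)$ for $N$ finite free. The only cosmetic difference is that the paper first establishes the underlying $R$-module bijection $\Phi\mapsto\Phi(1_AH)$ via a chain of adjunction isomorphisms before verifying multiplicativity, whereas you read everything off the matrix expansion directly.
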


\begin{prop}[{Prop.\ \ref{prop: stone}}]
\label{mprop: stone}
Let $RG$ be the commutative algebra of $R$-valued functions on $G$, and $\alpha$ the action of $G$ on $RG$ given by $(\alpha_{g} f) (k) = f(g^{-1}k)$ for $g,k \in G$ and $f \in RG$.
There is an isomorphism of $R$-algebras
\[
\mathcal{H}_{R}(G,H,RG,\alpha) \cong \End_{R} (R[G/H]) .
\]
\end{prop}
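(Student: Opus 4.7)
The plan is to deduce this result from Theorem~\ref{mthm: fixed_points} applied with $A = RG$, which provides an $R$-algebra isomorphism
\[
\mathcal{H}_R(G,H,RG,\alpha) \cong \bigl(RG \otimes \End_R(R[G/H])^{\mathrm{op}}\bigr)^G.
\]
The task thus reduces to identifying the right-hand side with $\End_R(R[G/H])$.

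The key step is to establish a general fact: for any $G$-algebra $V$ over $R$, evaluation at the identity gives an $R$-algebra isomorphism
\[
\mathrm{ev}_e \colon (RG \otimes V)^G \xrightarrow{\cong} V, \quad F \mapsto F(e).
\]
Since $G$ is finite and $RG$ is the $R$-algebra of functions on $G$ with pointwise product (basis $\{\delta_g\}$), we identify $RG \otimes V$ with $\Maps(G,V)$, equipped with pointwise multiplication; the diagonal $G$-action on this space becomes $(g \cdot F)(k) = g \cdot F(g^{-1}k)$. Invariance then forces $F(k) = k \cdot F(e)$ for all $k \in G$, so $F$ is determined by $F(e)$, while any $v \in V$ extends to an invariant by setting $F(k) := k \cdot v$. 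Multiplicativity of $\mathrm{ev}_e$ is immediate from the pointwise product, and the hypothesis that $G$ acts on $V$ by algebra automorphisms ensures that its inverse $v \mapsto (k \mapsto k \cdot v)$ is also multiplicative.

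Applying this general fact with $V = \End_R(R[G/H])^{\mathrm{op}}$, equipped with the conjugation action of $G$, yields
\[
\mathcal{H}_R(G,H,RG,\alpha) \cong \End_R(R[G/H])^{\mathrm{op}}.
\]
Since $R[G/H]$ is free of finite rank over $R$, transposition with respect to a chosen ordered basis of $R[G/H]$ supplies an $R$-algebra isomorphism $\End_R(R[G/H])^{\mathrm{op}} \cong \End_R(R[G/H])$, completing the argument.

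I expect the main obstacle to be purely notational: keeping careful track of where opposites appear in Theorem~\ref{mthm: fixed_points} and verifying that each identification respects the algebra structures. The substantive content is the elementary computation $(RG \otimes V)^G \cong V$, which reflects the observation that $RG$, viewed as a $G$-module via $\alpha$, is the regular representation and so admits a unique invariant lift of any element of $V$.
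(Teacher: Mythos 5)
Your proof is correct and follows essentially the same route as the paper's: both reduce to Theorem \ref{thm: fixed_points} with $A = RG$ and then invoke the isomorphism $(RG \otimes B)^{G} \cong B$, your evaluation-at-the-identity map being exactly the inverse of the map $b \mapsto \sum_{g} \delta_{g} \otimes (g \cdot b)$ in the paper's Lemma \ref{lem: RG_B}. The only (minor) difference is that you explicitly dispose of the $(-)^{\mathrm{op}}$ by a transpose, which the paper handles implicitly.
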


\begin{remi}
\label{rem:main_results_generalisations}
Theorems \ref{mthm: double_coset}, \ref{mthm: skew_hecke_corner} and \ref{mthm: fixed_points} generalise the well-known isomorphisms for Hecke algebras $\HGH \cong R[H \backslash G / H]$, $\HGH \cong \eH R[G] \eH$ \cite[Chap.\ I, Thm.\ 6.6]{KriegBook1990}, and $\HGH \cong \End_{R[G]} (R[G/H])^{\mathrm{op}}$ \cite[Chap.\ I, Thm.\ 4.8]{KriegBook1990}.
Proposition \ref{mprop: stone} is a generalisation of the well-known isomorphism $RG \rtimes G \cong M_{n}(R)$. For $R = \mathbb{C}$ and $A$ a C*-algebra, Proposition \ref{mprop: stone} is a special case of the `Stone-von Neumann theorem' proven by Palma for Hecke crossed product C*-algebras \cite[Thm.\ 7.0.3]{PalmaBook2018}, see Remark \ref{remi: palma_stone} below.
\end{remi}

\subsection{Further results}
In \S \ref{sec: change_groups} we prove several results concerning the behaviour of skew Hecke algebras under certain group-theoretic operations:

\begin{enumerate}
\item If $N \le H \le G$ with $N$ normal in $G$ then there is an isomorphism
\[
\mathcal{H}_{R} (G,H,A,\alpha) \cong \mathcal{H}_{R} (G/N,H/N,A^{N},\alpha^{N}) .
\]

\item If $A_{1}$ and $A_{2}$ are free as $R$-modules, then a tensor product
\[
\mathcal{H}(G_1,H_1,A_1,\alpha_1) \otimes \mathcal{H}(G_2,H_2,A_2,\alpha_2) .
\]
is isomorphic to the skew Hecke algebra
\[
\mathcal{H}_{R}(G_1 \times G_2,H_1 \times H_2,A_1 \otimes A_2,\alpha_1 \otimes \alpha_2) .
\]

\item If $H \le K \le G$ then there is an injective $R$-algebra morphism
\[
\mathcal{H}_{R}(K,H,A,\alpha) \hookrightarrow \HGHA .
\]

\item If $H,H' \le G$ are conjugate subgroups of $G$ then there is an isomorphism
\[
\HGHA \cong \mathcal{H}_{R}(G,H',A,\alpha) .
\]

\item If $N \rtimes K$ is a semi-direct product group with $K$ finite, $H \le K$ is a subgroup, and $\alpha$ is the $R$-linear extension to $R[N]$ of the action of $K$ on $N$, then there is an $R$-algebra isomorphism
\[
\mathcal{H}_{R}(K,H,R[N],\alpha) \cong \mathcal{H}_{R}(N \rtimes K,H) .
\]
\end{enumerate}

\begin{remi}
Statements (a) and (b) in the preceding list generalise the well-known isomorphisms for Hecke algebras $\mathcal{H}_{R}(G,H) \cong \mathcal{H}_{R}(G/N,H/N)$ \cite[Chap.\ I, Cor.\ 6.2]{KriegBook1990} and $\mathcal{H}_{R}(G_1 \times G_2 , H_1 \times H_2) \cong \mathcal{H}_{R}(G_1,H_2) \otimes \mathcal{H}_{R}(G_2,H_2)$ \cite[Chap.\ I, Cor.\ 6.2, Thm.\ 6.3]{KriegBook1990}.
Statement (a) also generalises the fact that if $H \le G$ is normal then
\[
\HGHA \cong A^{H} \rtimes G/H ;
\]
see Proposition \ref{prop: special_cases}.
\end{remi}

In \S \ref{sec: further_properties} we show the following:

\begin{enumerate}
\item If $S$ is a commutative $R$-algebra free as an $R$-module, and $\alpha_{S}(g) = \mathrm{id}_{S} \otimes \alpha_{g}$, then
\[
S \otimes_{R} \HGHA \cong \mathcal{H}_{S}(G,H,S \otimes_{R} A,\alpha_S) .
\]

\item If $\beta_{g} = \chi(g)\alpha_{g}(-)\chi(g)^{-1}$ for $\chi : G \to A^{\times}$ an $\alpha$-cocycle satisfying $\chi(h) = 1_{A}$ for all $h \in H$, then
\[
\HGHA \cong \mathcal{H}_{R}(G,H,A,\beta) .
\]
In particular, if $\alpha_{g} a = \chi(g) a \chi(g)^{-1}$ for some group homomorphism $\chi : G \to A^{\times}$ satisfying $\chi(H) = \{1_{A}\}$, and $A$ is free as an $R$-module, then
\[
\HGHA \cong A \otimes \HGH .
\]

\item The construction of $\HGHA$ is compatible with gradings or filtrations on $A$, and if $A$ is filtered and $|G|$ is a unit in $A$ then
\[
\Gr \HGHA \cong \mathcal{H}_{R}(G,H,\Gr A,\alpha^{\mathrm{Gr}}) ,
\]
where `$\mathrm{Gr}$' denotes `associated graded algebra', see \S \ref{sec: grad_filt} for details.

\item If $\alpha^{\mathrm{op}}$ is the action of $G$ on the opposite algebra $A^{\mathrm{op}}$, defined by $\alpha_{g}^{\mathrm{op}} = \alpha_{g}$, then
\[
\HGHA^{\mathrm{op}} \cong \mathcal{H}_{R}(G,H,A^{\mathrm{op}},\alpha^{\mathrm{op}}) .
\]
\end{enumerate}

\begin{remi}
Statement (b) in the preceding list generalises the corresponding fact for skew group rings, and relates to the automorphisms of $\HGH$ determined by group homomorphisms $\chi : G \to R^{\times}$ with $H \le \ker \chi$ \cite[Chap.\ I, Lemm.\ 6.5]{KriegBook1990}.
Statement (d) is an analogue of the existence of the natural involution on $\HGH$ \cite[Chap.\ I, Ex.\ 7.3]{KriegBook1990}.
\end{remi}

\subsection{Examples}
\label{sec: main_results_examples}
In \S \ref{sec: example_S3} we apply our main results to skew Hecke algebras of the form
\[
\mathcal{H}_{R}(S_{3},S_{2},A,\alpha) ,
\]
$S_{2} \le S_{3}$ being the smallest example of a non-normal subgroup of a group.
We describe for general $A$ and $\alpha$ the $R$-module structure and product on $\HGHASym$ and then specialise to the case where $A = R[x_1,x_2,x_3]$ with $S_3$ acting by permuting the generators $x_1,x_2,x_3$.
We also consider the case where $A = RS_{3}$ is the commutative algebra of $R$-valued functions on $G$, and the case where $A = R[K]$ is the group algebra of a finite group.

% >>>

% subsec: terminology <<<
\subsection{Terminology and notation}
\label{sec: intro_existing_results_terminology}
In order to stay close to established nomenclature in the ring theory literature \cite{MontgomeryBook1980,PassmanBook1989} we use the term `skew Hecke algebra' rather than `twisted Hecke algebra' or `Hecke crossed product'.
We view skew Hecke algebras as generalisations of skew group algebras and of Hecke algebras.
We use the notation $\HGHA$ as it is similar to the standard notation $\HGH$ for Hecke algebras.
\\
\\
Note that in the ring theory literature, a `twisted group ring' $R^{\omega}[G]$ has a product twisted by a 2-cocycle $\omega : G \times G \to R^{\times}$, and a `crossed product' is a similarly-twisted version of a skew group algebra; see \cite{PassmanBook1989} for further details.
In the operator algebra literature, the term `crossed product' (resp.\ `twisted crossed product') is typically used to refer to analogues of what in the ring theory literature is called a `skew group algebra' (resp.\ `crossed product'), e.g.\ compare \cite{PassmanBook1989} with \cite{WilliamsBook2007}.
\\
\\
The skew Hecke algebras defined above are distinct from the `twisted graded Hecke algebras' introduced by Witherspooon in \cite{Witherspoon2007}, the latter being more closely related to the `degenerate affine Hecke algebras' introduced by Drinfeld \cite{Drinfeld1986} and to deformations of the group algebras of Coxeter groups.
% >>>

% subsec: acknowledgements <<<
\subsection{Acknowledgements}
\label{sec: acknowledgements}
The authors would like to thank Andrew Baker for conversations about skew Hecke algebras.
LL would like to thank the Theoretical Visiting Sciences Programme at the Okinawa Institute of Science and Technology (OIST) for enabling his visit, and for the generous hospitality and excellent working conditions during his time there, which significantly aided the development and completion of this work.
% >>>

% >>>

% sec: setup and notation <<<
\section{Background and notation}
\label{sec: setup_notation}

% subsec: rings and modules <<<
\subsection{Rings}
All rings have two-sided identities and unless otherwise stated all ring homomorphisms are identity preserving.
If $S$ is a ring and $X$ is a set then we use $S[X]$ to denote the free $S$-module on $X$.
\\
\\
Throughout, $R$ denotes a fixed commutative ground ring with unit element $1_R$.
We write $R^{\times}$ for the group of units in $R$.
Unless otherwise stated, $\Hom$, $\End$, $\otimes$ and $(-)^{*}$ denote $\Hom_{R}$, $\End_R$, $\otimes_R$ and $\Hom_{R}(-,R)$ respectively.
In \S \ref{sec: ext_scalars} we allow the ground ring to vary.
\\
\\
By an $R$-algebra, or just algebra when $R$ is understood, we mean a ring $A$ equipped with a ring homomorphism $R \to Z(A)$ from $R$ to the center $Z(A)$ of $A$;
equivalently, $A$ is a ring equipped with an $R$-module structure with respect to which the multiplication $A \times A \to A$ is $R$-bilinear.

% >>>

% subsec: group algebras <<<
\subsection{Groups}
\label{sec: setup_RG_modules}
Unless otherwise stated, $G$ will always denote a \emph{finite} group, and $H$ will denote a subgroup of $G$.
We denote by $R[G]$ the group algebra of $G$.
For $R$ the trivial $R[H]$-module, we identify the induced module $\IndHG R$ with the free $R$-module $R[G/H]$ with $G$-action $g \cdot kH = gkH$.
We use e.g.\ $\sum_{g}$ as a shorthand for $\sum_{g \in G}$, $\sum_{h}$ for $\sum_{h \in H}$, and $\sum_{gH}$ for a sum over a set of representatives of the left cosets of $H$ in $G$.
% >>>

% subsec: skew group algebras <<<
\subsection{Skew group algebras}
\label{sec: setup_skew_group}
For details about skew group algebras see \cite{PassmanBook1989}.
Let $G$ be a group, $A$ an $R$-algebra, and $\alpha: G \to \Aut_{R\text{-alg}}(A)$ a group homomorphism from $G$ to the group of $R$-algebra automorphisms of $A$.
The associated \emph{skew group algebra} $A \rtimes G$ or $A \rtimes_{\alpha} G$ is the free $A$-module $A[G]$ equipped with the $R$-bilinear multiplication determined by
\begin{equation}
\label{eqn: skew_group_mult}
(ag) (bk) = a (\alpha_g b) (gh) .
\end{equation}
Equivalently, $A \rtimes G = A \otimes R[G]$ with the product $(a \otimes g) (b \otimes k) = a (\alpha_g b) \otimes gh$.
There is an $R$-algebra isomorphism from $A \rtimes G$ to the $R$-module $\Maps(G,A)$ of functions from $G$ to $A$ equipped with the convolution product
\begin{equation}
\label{eqn: skew_group_convolution}
(\phi * \psi) (g) = \sum_{k \in G} \phi(k) \alpha_{k} \psi(k^{-1}g) .
\end{equation}
The isomorphism maps $ag$ to the function $\delta_{g,a}$ with value $a$ at $g$, and zero elsewhere.

% >>>

% subsec: hecke algebras <<<
\subsection{Hecke algebras}
\label{sec: setup_hecke}
For details about Hecke algebras see \cite{KriegBook1990}, and also \cite[\S 11 D]{CurtisRBook1990}, \cite{BumpBook1997}, \cite[Chap.\ 45-46]{BumpBook2013}.
Let $G$ be a finite group and $H \le G$ a subgroup.
The associated \emph{Hecke algebra} $\HGH$ is the $R$-module
\[
\Maps(G/H,R)^{H} := \{ \phi : G/H \to R \mid \phi(hgH) = \phi(gH) \text{ for all } h \in H , g \in G\}
\]
of $H$-invariant functions from $G/H$ to $R$, equipped with the product
\begin{equation}
\label{eqn: setup_hecke_product}
( \phi * \psi ) (gH) = \sum_{kH \in G/H} \phi(kH) \psi(k^{-1}gH)
\end{equation}
where the sum is over a set of representatives of the left cosets of $H$ in $G$.
There is an isomorphism of $R$-modules
\[
\HGH \cong R[H \backslash G / H]
\]
where $R[H \backslash G / H]$ is the free $R$-module on the set of $H$-double cosets in $G$. 
 There is an isomorphism of $R$-algebras 
\[
\HGH \cong \End_{R[G]} ( \IndHG R ) ^{\mathrm{op}}
\]
where $\IndHG R \cong R[G/H]$ is the induction of the trivial $R[H]$-module, see \cite[Chap.\ 1, Thm.\ 4.8]{KriegBook1990}.
If $|H|$ is a unit in $R$ then $\eH := \frac{1}{|H|} \sum_{h \in H} h$ is an idempotent in $R[G]$, and there is an $R$-algebra isomorphism
\[ 
\HGH \cong \eH R[G] \eH ,
\]
see \cite[Chap.\ 1, Thm.\ 6.6 \& Cor.\ 6.7]{KriegBook1990}.
% >>>

% >>>

% sec: skew Hecke algebras <<<
\section{Skew Hecke algebras}
\label{sec: skew_hecke}

% subsec: setup <<<
\subsection{Setup}
\label{sec: setup_tuples}
Throughout the remainder of the paper a tuple $(G,H,A,\alpha)$ will denote the data of a \emph{finite} group $G$ with identity element $1_G$ or just $1$ when $G$ is understood, a subgroup $H \le G$, an $R$-algebra $A$, and a group homomorphism $\alpha : G \to \Aut_{R\text{-alg}}(A)$ from $G$ to the group of $R$-algebra automorphisms of $A$.
We write $\alpha_g$ for $\alpha(g)$ and $\alpha_g a$ for $\alpha_g (a)$.
Associated to this data is the skew group algebra $A \rtimes G$ defined in \S \ref{sec: setup_skew_group} and the Hecke algebra $\HGH$ defined in \S \ref{sec: setup_hecke}.
% >>>

% subsec: product structure <<<
\subsection{The $R$-module structure and product}
\label{sec: the_product}
The following definition appears in work of Baker \cite{Baker1998} under the name `twisted Hecke algebra', and work of Palma \cite{PalmaBook2018} under the name `Hecke crossed product'.
See Remark \ref{sec: intro_existing_results_terminology} for an explanation of our choice of terminology.

\begin{defi}[{\cite[\S 2]{Baker1998},\cite[\S 3]{PalmaBook2018}}]
\label{def: skew_hecke}
The \emph{skew Hecke algebra} $\HGHA$ associated to the data $(G,H,A,\alpha)$ is the $R$-module
\begin{equation}
\label{eqn: def_skew_hecke}
\Maps(G/H,A)^{H} := \{ \phi : G/H \to A \; | \; \phi(hgH) = \alpha_{h} \phi(gH) \; \forall h \in H \; , \; gH \in G/H \}
\end{equation}
equipped with the product
\begin{equation}
\label{eqn: skew_hecke_product}
( \phi * \psi ) (gH) = \sum_{kH} \phi(kH) \alpha_{k} \psi(k^{-1}gH)
\end{equation}
where the sum in \eqref{eqn: skew_hecke_product} is over a set of representatives of the left cosets of $H$ in $G$.
\end{defi}

As explained in \cite[\S 3]{PalmaBook2018} and \cite[\S 2]{Baker1998}, it follows from straightforward computations using the fact that $\phi , \psi$ are $H$-invariant, that the right hand side of \eqref{eqn: skew_hecke_product} is well defined, $\phi * \psi$ is $H$-invariant, and $*$ is associative.
The algebra $\HGHA$ is unital, with the identity element equal to the function $\delta_{H,1_{A}}$ with value $1_{A}$ at the identity coset $H$, and zero elsewhere.
Note that as a ring, $\HGHA$ is independent of the ground ring $R$ or the $R$-algebra structure on $A$.

\begin{prop}
\label{prop: skew_hecke_R_module_isomorphisms}
The following $R$-modules are isomorphic.
\begin{enumerate}
\item \label{it: skew_hecke_R_module_HGHA} $\HGHA$
\item \label{it: skew_hecke_R_module_Maps_GH_A} $\Maps(G/H,A)^{H}$
\item \label{it: skew_hecke_R_module_A_RGH} $( A \otimes R[G/H] )^{H}$
\item \label{it: skew_hecke_R_module_A_GH} $A[G/H]^{H}$
\item \label{it: skew_hecke_R_module_Ind} $(\Ind_H^G \Res_H^G A)^{H}$
\item \label{it: skew_hecke_R_module_Coind} $(\Coind_H^G \Res_H^G A)^{H}$
\item \label{it: skew_hecke_R_module_Maps_G_A} $\Maps(G,A)^{H \times H}$
\item \label{it: skew_hecke_R_module_A_RG} $( A \otimes R[G] )^{H \times H}$
\item \label{it: skew_hecke_R_module_A_G} $A[G]^{H \times H}$
\end{enumerate}
where in \eqref{it: skew_hecke_R_module_Maps_GH_A}, \eqref{it: skew_hecke_R_module_A_RGH}, \eqref{it: skew_hecke_R_module_A_GH}, \eqref{it: skew_hecke_R_module_Maps_G_A}, \eqref{it: skew_hecke_R_module_A_RG}, \eqref{it: skew_hecke_R_module_A_G} the actions of $H$ or $H \times H$ are respectively
\begin{align}
\label{eqn: action_Maps_GH_A}
( h \cdot \phi ) (gH) & = \alpha_{h} \phi(h^{-1}gH) & \phi \in \Maps(G/H,A) \\
\label{eqn: action_A_RGH}
h \cdot (a \otimes gH) & = \alpha_{h} a \otimes hgH & a \otimes gH \in A \otimes R[G/H] \\
\label{eqn: action_A_GH}
h \cdot (a (gH)) & = \alpha_{h} a \otimes hgH & a (gH) \in A[G/H] \\
\label{eqn: action_Maps_G_A}
( (h,h') \cdot \Phi ) (g) & = \alpha_{h} \Phi( h^{-1} g h' ) & \Phi \in \Maps(G,A) \\
\label{eqn: action_A_RG}
(h,h') \cdot ( a \otimes g ) & = \alpha_{h} a \otimes hgh'^{-1} & a \otimes g \in A \otimes R[G] \\
\label{eqn: action_A_G}
(h,h') \cdot (ag) & = (\alpha_{h} a) hg(h')^{-1} & ag \in A[G] .
\end{align}
\end{prop}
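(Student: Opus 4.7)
The plan is to identify all nine modules with a single underlying object --- functions $G \to A$ satisfying a bi-$H$-equivariance condition --- and to check that each $H$- or $(H \times H)$-action in \eqref{eqn: action_Maps_GH_A}--\eqref{eqn: action_A_G} matches across the various encodings.

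First I would dispense with the definitional identifications. Items (a) and (b) are literally equal by Definition \ref{def: skew_hecke}, and items (c) and (d) coincide since $A[G/H] = A \otimes R[G/H]$ by convention. For (g)--(i) the standard isomorphisms
\[
\Maps(G,A) \;\cong\; A \otimes R[G] \;\cong\; A[G],
\]
sending $\Phi \leftrightarrow \sum_{g} \Phi(g) \otimes g \leftrightarrow \sum_{g} \Phi(g) g$, intertwine the actions \eqref{eqn: action_Maps_G_A}, \eqref{eqn: action_A_RG}, \eqref{eqn: action_A_G} by a one-line substitution, so taking $(H \times H)$-fixed points yields (g) $\cong$ (h) $\cong$ (i). The same recipe applied to $G/H$ in place of $G$ gives (b) $\cong$ (c).

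Next I would connect the $G$-description to the $G/H$-description. A function $\Phi : G \to A$ is fixed by the factor $\{1\} \times H$ of the action \eqref{eqn: action_Maps_G_A} precisely when $\Phi(gh') = \Phi(g)$, that is, when it descends to a function $\phi : G/H \to A$; the remaining left-$H$-invariance $\alpha_h \Phi(h^{-1}g) = \Phi(g)$ then reads exactly as the defining condition $\phi(hgH) = \alpha_h \phi(gH)$ of the skew Hecke algebra. This gives (g) $\cong$ (b). For (f) I would use the standard presentation
\[
\Coind_H^G \Res_H^G A \;=\; \{ f : G \to A \,:\, f(hg) = \alpha_h f(g) \},
\]
with $G$-action $(g \cdot f)(g') = f(g'g)$; the $H$-invariants then coincide with the same class of bi-equivariant functions, so (f) $\cong$ (b). Finally, (e) $\cong$ (f) is the canonical natural isomorphism $\Ind_H^G \cong \Coind_H^G$ between functors on $R[H]$-modules, which holds because $[G : H] < \infty$.

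The arithmetic is routine throughout; the only point demanding care is the $\Ind$ description. The module $\Ind_H^G \Res_H^G A = R[G] \otimes_{R[H]} A$ carries its $G$-action only on the left factor, while the $\alpha$-action on $A$ enters solely through the balancing over $R[H]$, and a direct comparison with $A[G/H]$ would require a choice of coset representatives together with a cocycle-style rewrite involving the maps $h, g_i \mapsto h_i$ determined by $hg_i = g_{\sigma(i)} h_i$. Routing (e) through (f) via $\Ind \cong \Coind$, rather than trying to match (e) directly to (d), sidesteps this bookkeeping and gives the cleanest way to close the chain.
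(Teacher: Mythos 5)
Your proof is correct and follows essentially the same route as the paper: definitional identifications among (a)--(d) and (g)--(i), the correspondence between right-$H$-invariant functions on $G$ and functions on $G/H$ to link the two blocks, the presentation of $\Coind_H^G$ as left-$H$-equivariant functions to obtain (f), and the finite-index isomorphism $\Ind_H^G \cong \Coind_H^G$ to reach (e). No gaps.
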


\begin{proof}
\eqref{it: skew_hecke_R_module_HGHA} = \eqref{it: skew_hecke_R_module_Maps_GH_A} by Definition \ref{def: skew_hecke};
\eqref{it: skew_hecke_R_module_Maps_GH_A} $\cong$ \eqref{it: skew_hecke_R_module_A_RGH} via the map $\phi \mapsto \sum_{gH} \phi(gH) \otimes gH$;
\eqref{it: skew_hecke_R_module_A_RGH} $\cong$ \eqref{it: skew_hecke_R_module_A_GH} is immediate;
 \eqref{it: skew_hecke_R_module_Ind} $\cong$ \eqref{it: skew_hecke_R_module_Coind} because $G$ is finite so that $\IndHG \cong \CoindHG$ \cite[\S 3.3]{BensonBook1998};
\eqref{it: skew_hecke_R_module_Coind} = \eqref{it: skew_hecke_R_module_Maps_G_A} by the definition of $\CoindHG$;
\eqref{it: skew_hecke_R_module_Maps_G_A} $\cong$ \eqref{it: skew_hecke_R_module_A_RG} via the map $\Phi \mapsto \sum_{g} \Phi(g) \otimes g$;
\eqref{it: skew_hecke_R_module_A_RG} $\cong$ \eqref{it: skew_hecke_R_module_A_G} is immediate.
Finally, \eqref{it: skew_hecke_R_module_Maps_GH_A} $\cong$ \eqref{it: skew_hecke_R_module_Maps_G_A} because functions $\phi : G/H \to A$ correspond to functions $\Phi : G \to A$ satisfying $\Phi(gh) = \Phi(g)$ for all $g \in G$, $h \in H$.
\end{proof}

Under the isomorphism $\phi \mapsto \sum_{gH} \phi(gH) gH$ from $\Maps(G/H,A)^{H}$ to $A[G/H]^{H}$, the convolution product \eqref{eqn: skew_hecke_product} corresponds to the product on $A[G/H]^{H}$ defined by
\begin{equation}
\label{eqn: skew_hecke_product_free}
\left( \sum_{gH} a_{gH} gH \right) \left( \sum_{kH} b_{kH} kH \right) = \sum_{gH,kH} a_{gH} ( \alpha_{gH} b_{kH} ) gkH ,
\end{equation}
and under the isomorphism $A[G/H]^{H} \cong (A \otimes R[G/H])^{H}$ this corresponds to
\begin{equation}
\label{eqn: skew_hecke_product_tensor}
\left( \sum_{gH} a_{gH} \otimes gH \right) \left( \sum_{kH} b_{kH} \otimes kH \right) = \sum_{gH,kH} a_{gH} ( \alpha_{gH} b_{kH} ) \otimes gkH .
\end{equation}
% >>>

% subsec: H a unit case <<<
\subsection{The $|H|$ a unit case}
If $|H|$ is a unit in $A$ then the map
\begin{equation}
\label{eqn:isom_MapsGH,MapsG}
\phi \mapsto \left( g \mapsto \frac{1}{|H|} \phi(gH) \right)
\end{equation}
is an $R$-algebra isomorphism from $\Maps(G/H,A)^{H}$ to
\[
\Maps(G,A)^{H \times H} := \left\{ \Phi : G \to A \; | \; \Phi(hgh') = \alpha_{h} \Phi(g) \text{ for all } h,h' \in H, \; g \in G \right\} ,
\]
where $\Maps(G,A)^{H \times H}$ is equipped with restriction of the convolution product \eqref{eqn: skew_group_convolution} on $\Maps(G,A) \cong A \rtimes G$, so that
\[
(\Phi * \Psi)(g) = \sum_{g \in G} \Phi(k) \alpha_{k} \Psi(k^{-1}g) .
\]
Note that the factor of $1/|H|$ is needed to make the map \eqref{eqn:isom_MapsGH,MapsG} multiplicative and unital.
If $A = R$ then this isomorphism reduces to the corresponding result for Hecke algebras is \cite[Chap.\ I, Cor.\ 6.7]{KriegBook1990}.
The following result is a generalisation of the isomorphism $\HGH \cong \eH R[G] \eH$, see \cite[Chap.\ I, Thm.\ 6.6]{KriegBook1990}.

\begin{prop}
\label{prop:skew_hecke_corner}
If $|H|$ is a unit in $A$ then $\eeH := \frac{1}{|H|} \sum_{h \in H} 1_{A} \otimes h$ is an idempotent in the skew group ring $A \rtimes G$ and there is an $R$-algebra isomorphism
\begin{equation}
\label{eqn: skew_hecke_corner_isom}
\HGHA \cong \eeH ( A \rtimes G ) \eeH .
\end{equation}
\end{prop}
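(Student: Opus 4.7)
The plan is to verify directly that $\eeH$ is an idempotent, and then to identify the corner ring $\eeH(A\rtimes G)\eeH$ with the image of $\HGHA$ under the $R$-algebra map \eqref{eqn:isom_MapsGH,MapsG} displayed in the paragraph preceding the proposition.

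For idempotency, I would expand, using the product \eqref{eqn: skew_group_mult} and the relation $\alpha_h 1_A = 1_A$,
\[
\eeH \cdot \eeH \;=\; \frac{1}{|H|^2}\sum_{h,h'\in H}(1_A\otimes h)(1_A\otimes h') \;=\; \frac{1}{|H|^2}\sum_{h,h'\in H} 1_A\otimes hh',
\]
and then reindex the inner sum by $k = hh'$ to see that the right-hand side equals $\frac{1}{|H|}\sum_{k\in H}1_A\otimes k = \eeH$.

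For the identification of the corner, I would pass through the isomorphism $A\rtimes G \cong \Maps(G,A)$ of \S\ref{sec: setup_skew_group}, under which $\eeH$ corresponds to the function taking value $\frac{1}{|H|}$ on $H$ and $0$ elsewhere. A short convolution calculation using \eqref{eqn: skew_group_convolution} shows that for an arbitrary $\Phi\in\Maps(G,A)$ the function $\eeH*\Phi$ is left $H$-invariant and $\Phi*\eeH$ is right $H$-invariant in the sense of \eqref{eqn: action_Maps_G_A}, so that $\eeH*\Phi*\eeH$ always lies in $\Maps(G,A)^{H\times H}$. Conversely, if $\Phi$ is already $(H\times H)$-invariant then the identities $\alpha_h\Phi(h^{-1}g) = \Phi(g)$ and $\Phi(gh) = \Phi(g)$ reduce each $H$-summation in $\eeH*\Phi*\eeH$ to a factor of $|H|$ that precisely cancels the normalisations coming from $\eeH$, giving $\eeH*\Phi*\eeH = \Phi$. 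Combining these two facts yields the set-theoretic equality $\eeH(A\rtimes G)\eeH = \Maps(G,A)^{H\times H}$ inside $A\rtimes G$, and composing with the already-established $R$-algebra isomorphism \eqref{eqn:isom_MapsGH,MapsG} produces the desired isomorphism \eqref{eqn: skew_hecke_corner_isom}.

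I expect the main obstacle to be purely bookkeeping: tracking the factors of $1/|H|$ through the several identifications, in particular checking that the unit $\delta_{H,1_A}\in\HGHA$ maps to $\eeH$, the identity of the corner ring. Once the normalisation in \eqref{eqn:isom_MapsGH,MapsG} is in place, everything follows from the convolution calculations above without further surprises.
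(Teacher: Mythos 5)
Your proposal is correct and follows essentially the same route as the paper: both pass through the normalised isomorphism \eqref{eqn:isom_MapsGH,MapsG} onto $\Maps(G,A)^{H\times H}$ and then identify the $H\times H$-invariants with the corner $\eeH(A\rtimes G)\eeH$, the paper doing so via the observation that the $H\times H$-action is implemented by left and right multiplication by $1_A\otimes h$, while you spell out the same fact with explicit convolution computations.
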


\begin{proof}
That $\eeH$ is an idempotent in $A \rtimes G$ follows from the fact that $(1_{A} h) (1_{A} h') = 1_{A} hh'$ for $h,h' \in H$.
Under the $R$-algebra isomorphism $\Maps(G,A) \cong A \rtimes G$ of \S \ref{sec: setup_skew_group}, the action \eqref{eqn: action_Maps_G_A} of $H \times H$ on $\Maps(G,A)$ corresponds to the action \eqref{eqn: action_A_RG} of $H \times H$ on $A \rtimes G$ given by
\begin{align*}
(h,h') \cdot (a \otimes g) & = \alpha_h a \otimes hgh'^{-1} = (1_A \otimes h) (a \otimes g) (1_A \otimes h'^{-1}) .
\end{align*}
It follows that the $H \times 1$-invariants in $A \rtimes G$ are $\eeH (A \rtimes G)$, the $1 \times H$ invariants are $(A \rtimes G) \eeH$, and as the actions of the two copies of $H$ commute, the $H \times H$ invariants are $(A \rtimes G)^{H \times H} = \eeH (A \rtimes G) \eeH$.
\end{proof}

\begin{remi}
We give an alternative proof of Proposition \ref{prop:skew_hecke_corner} in Corollary \ref{cor:corner_ring} using the isomorphism $\HGHA \cong \End_{A \rtimes G}(A[G/H])^\mathrm{op}$ of Theorem \ref{thm: fixed_points}.
\end{remi}

% >>>

% subsec: inclusions <<<
\subsection{Two subalgebras}
\label{sec: two_subalgebras}
There are natural $R$-algebra homomorphisms
\begin{align}
\label{eqn: inclusion_general_AH}
A^{H} \to \HGHA & \; , \; a \mapsto \delta_{H,a} \\
\label{eqn: inclusion_general_HGH}
\HGH \to \HGHA & \; , \; \phi \mapsto i_{A} \circ \phi
\end{align}
where $\delta_{H,a}$ is the function with value $a$ at $H \in G/H$ and zero elsewhere, and $i_{A} : R \to A$ is the $R$-module structure map.
Under the isomorphisms $\HGH \cong R[H \backslash G / H]^{H}$ and $\HGHA \cong A[G/H]^{H}$, these morphisms correspond to the maps
\[
a \mapsto aH \;,\; \sum_{gH} r_{gH} gH \mapsto \sum_{gH} i_{A}(r_{gH}) gH .
\]
The morphism \eqref{eqn: inclusion_general_AH} is injective, and corresponds to the inclusion of the summand $A^{H}$ in the decomposition \eqref{eqn: double_coset} of Theorem \ref{thm: double_coset} below, whereas \eqref{eqn: inclusion_general_HGH} is injective if $A$ is faithful as an $R$-module.
As an $R$-algebra, the image of \eqref{eqn: inclusion_general_HGH} is isomorphic to $\mathcal{H}_{i_{A} (R)} (G,H)$, where $i_{A} : R \to A$ is the $R$-module structure map of $A$.

% >>>

% subsec: special cases <<<
\subsection{Some special cases}
\label{sec: special_cases}
The following proposition shows that the skew-Hecke algebra $\HGHA$ is a common generalisation of the skew-group algebra $A \rtimes G$, the Hecke algebra $\HGH$, and the algebra of $G$-invariants $A^{G}$.
Most of the special cases in Proposition \ref{prop: special_cases} appear in \cite{Baker1998}, or in a different form in the work of Palma on Hecke crossed product C*-algebras \cite[\S 3.2]{PalmaBook2018},\cite[\S 3.2]{Palma2012}.

\begin{prop}[{\cite{Baker1998,PalmaBook2018}}]
\label{prop: special_cases}
The following $R$-algebra isomorphisms exist, where $1$ denotes a trivial group or the trivial subgroup of a given group, $\mathrm{triv}$ is the trivial homomorphism from $G$ to $\Aut_{k \text{-alg}}(A)$.
\begin{enumerate}
\item \label{it: spec_case.R}
$\mathcal{H}_{R}(G,H,R,\mathrm{triv}) \cong \HGH$.

\item \label{it: spec_case.11}
$\mathcal{H}_{R}(1,1,A,\mathrm{triv}) \cong A$.

\item \label{it: spec_case.a_triv}
$\mathcal{H}_{R}(G,H,A,\mathrm{triv}) \cong A \otimes \HGH$ if $A$ is a free $R$-module.

\item \label{it: spec_case.H_eq_G}
$\mathcal{H}_{R}(G,G,A,\alpha) \cong A^{G}$.

\item \label{it: spec_case.H_eq_1}
$\mathcal{H}_{R}(G,1,A,\alpha) \cong A \rtimes G$.

\item \label{it: spec_case.N}
$\mathcal{H}_{R}(G,H,A,\alpha) \cong A^{H} \rtimes G/H$ if $H \le G$ is normal.
\end{enumerate}
In statement \eqref{it: spec_case.N} the action of $G$ on $A$ preserves $A^{H}$ because $H$ is normal, and the action $\alpha^{H}$ of $G/H$ on $A^{H}$ is defined by $gH \cdot a = \alpha_{g} a$.
\end{prop}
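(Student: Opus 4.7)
The overall plan is to verify each of the six isomorphisms by directly unwinding Definition \ref{def: skew_hecke} in the relevant special case, using the alternative descriptions of $\HGHA$ assembled in Proposition \ref{prop: skew_hecke_R_module_isomorphisms} and the convolution description of $A \rtimes G$ from \S \ref{sec: setup_skew_group}. Most of the statements reduce to the observation that either the $H$-invariance condition, or the twist $\alpha_k$ appearing in the product \eqref{eqn: skew_hecke_product}, becomes trivial in the case at hand.

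For statement (a), the hypothesis $A = R$ with $\alpha = \mathrm{triv}$ turns the invariance condition $\phi(hgH) = \alpha_h \phi(gH)$ into plain $H$-invariance $\phi(hgH) = \phi(gH)$, and removes the $\alpha_k$ factor from \eqref{eqn: skew_hecke_product}, recovering the Hecke algebra product \eqref{eqn: setup_hecke_product} exactly. Statements (b), (d), and (e) are essentially tautological: (b) because $G/H$ is a single point and the $R$-algebra structure on $A$ is preserved; (d) because $G/H = \{H\}$ is a single point and the invariance forces the single value of $\phi$ to lie in $A^G$, whereupon the product is just $A^G$-multiplication; (e) because $H=1$ imposes no invariance and \eqref{eqn: skew_hecke_product} specialises to the convolution product \eqref{eqn: skew_group_convolution} on $\Maps(G,A) \cong A \rtimes G$.

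For statement (c) I would use description \eqref{it: skew_hecke_R_module_A_RGH} of Proposition \ref{prop: skew_hecke_R_module_isomorphisms}. Since $A$ is free as an $R$-module, the natural map $A \otimes \Maps(G/H,R)^H \to \Maps(G/H,A)^H$ is an isomorphism, because taking $H$-invariants commutes with tensoring by a free module on which $H$ acts trivially (which it does in the case $\alpha = \mathrm{triv}$). Under this identification, the twist $\alpha_k$ in \eqref{eqn: skew_hecke_product} disappears, and the skew Hecke product splits as the tensor product of the multiplication on $A$ with the Hecke algebra product on $\HGH$, giving $\HGHA \cong A \otimes \HGH$ as $R$-algebras.

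The main content lies in statement (f), where normality is used twice. First, if $H \trianglelefteq G$, then $hgH = ghH = gH$ for all $h \in H$, so the condition $\phi(hgH) = \alpha_h\phi(gH)$ reads $\phi(gH) = \alpha_h \phi(gH)$, forcing $\phi(gH) \in A^H$; thus $\HGHA = \Maps(G/H, A^H)$ as $R$-modules with no further invariance. Second, the assignment $\alpha^H_{gH}(a) := \alpha_g(a)$ is a well-defined action of $G/H$ on $A^H$: if $g' = gh$ with $h \in H$, then $\alpha_{gh}(a) = \alpha_g \alpha_h(a) = \alpha_g(a)$ for $a \in A^H$. I then need to check that, for $\psi(k^{-1}gH) \in A^H$, the element $\alpha_k \psi(k^{-1}gH)$ again lies in $A^H$ and depends only on the coset $kH$; both follow from the identity $\alpha_h \alpha_k = \alpha_k \alpha_{k^{-1}hk}$ together with $k^{-1}hk \in H$. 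With these preparations the product \eqref{eqn: skew_hecke_product} rewrites as $(\phi * \psi)(gH) = \sum_{kH} \phi(kH)\, \alpha^H_{kH} \psi\bigl((kH)^{-1} gH\bigr)$, which is exactly the convolution product \eqref{eqn: skew_group_convolution} on $A^H \rtimes G/H$. The bookkeeping with cosets in this final step is the principal obstacle, but it is handled cleanly once one records these well-definedness lemmas up front.
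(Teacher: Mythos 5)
Your proposal is correct and takes essentially the same approach as the paper: statements (a), (b), (e) by direct unwinding of the definition, (c) by using freeness of $A$ to commute $H$-invariants past the tensor factor with trivial action, and (f) by observing that normality makes the $H$-invariance condition force values into $A^{H}$ and then matching the convolution product with that of $A^{H} \rtimes G/H$. The only cosmetic differences are that you prove (d) directly as a one-point case while the paper deduces it as the $H = G$ instance of (f), and that you work throughout with $\Maps(G/H,A)^{H}$ where the paper phrases (c) and (f) in terms of $(A \otimes R[G/H])^{H}$.
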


\begin{proof}
Statements \eqref{it: spec_case.R},\eqref{it: spec_case.11} and \eqref{it: spec_case.H_eq_1} follow immediately from the definitions of the algebras $\HGHA$, $\HGH$ and $A \rtimes G$.
Statement \eqref{it: spec_case.H_eq_G} is the $H = G$ case of statement \eqref{it: spec_case.N} proven below.
\\
\\
\eqref{it: spec_case.a_triv}. 
If $G$ acts trivially on $A$ and $A$ is a free $R$-module then
\[
( A \otimes R[G/H] )^{H} = A \otimes R[G/H]^{H}
\]
and the product written in terms of tensors factorises as
\[
\left( \sum_{lH} a_{lH} \otimes lH \right) \cdot \left( \sum_{kH} b_{kH} \otimes kH \right) = \sum_{lH,kH} a_{lH} b_{kH} \otimes lkH .
\]
\\
\eqref{it: spec_case.N}.
If $H \le G$ is normal then $H$ acts trivially on $R[G/H]$, and because $R[G/H]$ is a free $R$-module
\[
( A \otimes R[G/H] )^{H} = A^{H} \otimes R[G/H]
\]
with product
\begin{align*}
\left( \sum_{lH} a_{lH} \otimes lH \right) \cdot \left( \sum_{kH} b_{kH} \otimes kH \right) & = \sum_{lH,kH} a_{lH} ( \alpha_{l} b_{kH} ) \otimes lkH \\
& = \sum_{lH,kH} a_{lH} ( \alpha^{H}_{lH} b_{kH} ) \otimes lkH ,
\end{align*}
where $\alpha^{H}_{lH} = \alpha_{l}$ as in the statement of the Proposition.
\end{proof}

\begin{remi}
\label{rem: reduce_to_cases}
It follows from Proposition \ref{prop: special_cases} that in order to construct a skew Hecke algebra $\HGHA$ that does \emph{not} have an immediate description in terms of fixed point algebras, skew group algebras, or Hecke algebras, one needs that $G$ is non-abelian, $H \le G$ is not normal, and the action $\alpha$ is non-trivial.
In \S \ref{sec: example_S3} below we consider the skew Hecke algebra
\[
\mathcal{H}_{R} (S_3,S_{2},R[x_1,x_2,x_3],\alpha),
\] 
where $\alpha$ is the action of the symmetric group $S_3$ on the polynomial ring $R[x_1,x_2,x_3]$ given by permuting the variables $x_1,x_2,x_3$.
\end{remi}
% >>>

% >>>

% sec: double coset decomposition <<<
\section{Double coset decomposition}
\label{sec: R_module_structure}
We prove a common generalisation of the facts that $A \rtimes G \cong A^{\oplus |G|}$ as left $A$-modules and $\HGH \cong R[H \backslash G / H]$ as $R$-modules.
\\
\\
Consider $\HGHA$ as an $A^{H}$-bimodule via the $R$-algebra morphism \eqref{eqn: inclusion_general_AH}, so that $a \cdot \phi \cdot a' = \delta_{H,a} * \phi * \delta_{H,a'}$ for $a,a' \in A^{H}$ and $\phi \in \HGHA$.
For each $g \in G$, the $R$-submodule $A^{H \cap gHg^{-1}} \subseteq A$ is an $A^{H}$-bimodule with actions
\begin{equation}
\label{eqn: AH_bimodule_actions}
a \cdot b \cdot a' = a b ( \alpha_{g} a')
\end{equation}
for $a,a' \in A^{H}$ and $b \in A^{H \cap gHg^{-1}}$.
Note that if $a,b$ are as in \eqref{eqn: AH_bimodule_actions} and $h \in H \cap gHg^{-1}$, then $h = gh'g^{-1}$ for some $h' \in H$ and
\[
h \cdot \left( a b (\alpha_{g}a') \right) = a b ( \alpha_{hg} a' ) = a b ( \alpha_{gh'} a' ) = a b ( \alpha_{g} a' )
\]
so that $ab(\alpha_{g}a')$ is indeed contained in $A^{H \cap gHg^{-1}}$.

\begin{thm}
\label{thm: double_coset}
There is an isomorphism of $A^{H}$-bimodules
\begin{align}
\label{eqn: double_coset}
\HGHA & \xrightarrow{\cong} \bigoplus_{HgH} A^{H \cap gHg^{-1}} \\
\nonumber
\phi & \mapsto \left( \phi(g_1H) , \dots , \phi(g_nH) \right)
\end{align}
where the sum is over a set of representatives of the double cosets of $H$ in $G$.
\end{thm}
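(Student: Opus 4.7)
The plan is to parametrise an $H$-invariant function $\phi:G/H\to A$ by its values on a set of representatives of the $H$-orbits in $G/H$, and then to recognise that those orbits are exactly the images of the $H$-double cosets in $G$.

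First I would record the orbit-stabiliser data for the left $H$-action on $G/H$ defined by $h\cdot gH=hgH$. The orbit through $gH$ is the image of $HgH$ under the quotient $G\to G/H$, so the orbits are in canonical bijection with $H\backslash G/H$. The stabiliser of $gH$ is $\{h\in H\mid hgH=gH\}=\{h\in H\mid g^{-1}hg\in H\}=H\cap gHg^{-1}$. Combined with the defining invariance $\phi(hgH)=\alpha_{h}\phi(gH)$, applying $h$ in this stabiliser forces $\phi(gH)=\alpha_{h}\phi(gH)$, i.e.\ $\phi(gH)\in A^{H\cap gHg^{-1}}$. Hence the evaluation map $\phi\mapsto(\phi(g_1H),\dots,\phi(g_nH))$ is a well-defined $R$-linear map from $\HGHA$ to $\bigoplus_{HgH}A^{H\cap gHg^{-1}}$.

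Next I would construct the inverse. Given a tuple $(b_1,\dots,b_n)$ with $b_i\in A^{H\cap g_iHg_i^{-1}}$, define $\phi$ on the orbit of $g_iH$ by $\phi(hg_iH):=\alpha_{h}b_i$ for $h\in H$, and extend by requiring $\phi$ to vanish outside the union of these orbits (which is all of $G/H$). Well-definedness is the key check: if $hg_iH=h'g_iH$ then $h^{-1}h'\in H\cap g_iHg_i^{-1}$, and since $b_i$ lies in the corresponding fixed subalgebra, $\alpha_{h^{-1}h'}b_i=b_i$, hence $\alpha_{h'}b_i=\alpha_{h}b_i$. The two constructions are manifestly mutually inverse, and the resulting $\phi$ satisfies the $H$-invariance condition by construction. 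This gives the isomorphism of $R$-modules.

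Finally, I would verify that evaluation is a map of $A^H$-bimodules. Recall that the bimodule structure on $\HGHA$ comes from the inclusion \eqref{eqn: inclusion_general_AH}, so $a\cdot\phi\cdot a'=\delta_{H,a}*\phi*\delta_{H,a'}$. Expanding the convolution \eqref{eqn: skew_hecke_product} and using that $\delta_{H,a}$ is supported on the coset $H$, only the representative $k=1$ contributes on the left, giving $(\delta_{H,a}*\phi)(gH)=a\,\phi(gH)$; for the right multiplication, $\delta_{H,a'}(k^{-1}gH)$ is nonzero exactly when $kH=gH$, so picking $k=g$ yields $(\phi*\delta_{H,a'})(gH)=\phi(gH)\,\alpha_{g}a'$. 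Thus at the component indexed by $g$ the action is $b\mapsto ab\,\alpha_{g}a'$, matching \eqref{eqn: AH_bimodule_actions}. The only step requiring genuine care is the well-definedness of the inverse, which is exactly where the value $b_i$ being fixed by $H\cap g_iHg_i^{-1}$ is used; everything else is bookkeeping that flows from the orbit-stabiliser correspondence.
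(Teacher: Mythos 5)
Your argument is correct and follows essentially the same route as the paper's proof: identify the $H$-orbits on $G/H$ with the double cosets and the stabiliser of $gH$ with $H\cap gHg^{-1}$, deduce that $\phi(gH)$ lands in $A^{H\cap gHg^{-1}}$, conclude bijectivity from the orbit decomposition, and verify the bimodule property by the convolution computation $(\delta_{H,a}*\phi*\delta_{H,a'})(gH)=a\,\phi(gH)\,\alpha_{g}a'$. The only difference is that you spell out the inverse map and its well-definedness explicitly, which the paper leaves implicit.
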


\begin{proof}
Under the left action of $H$ on $G/H$, the orbits are the double cosets of $H$ in $G$, and the stabiliser of a left coset $gH$ is the subgroup $H \cap gHg^{-1}$.
If $\phi \in \HGHA$ then by Definition \ref{def: skew_hecke}
\[
\alpha_{h} \phi(gH) = \phi(hgH) = \phi(gH)
\]
for all $h \in H \cap gHg^{-1}$, so that $\phi(gH) \in A^{H \cap gHg^{-1}}$, and an $H$-invariant function $G/H \to A$ is determined by its restriction to a set of representatives of the $H$-orbits.
This shows that \eqref{eqn: double_coset} is a bijection, and it is clear that this map is an $R$-module homomorphism.
To show that the map \eqref{eqn: double_coset} is a morphism of $A^{H}$-bimodules we calculate
\begin{align*}
( \delta_{H,a} * \phi * \delta_{H,a'} ) (gH) & = \sum_{kH} \delta_{H,a}(kH) \alpha_{k} \left( \phi * \delta_{H,a'} (k^{-1}gH) \right) \\
& = a \left( \phi * \delta_{H,a'} (gH) \right) \\
& = a \sum_{lH} \phi(lH) \alpha_{l} \delta_{H,a'} (l^{-1}gH) \\
& = a \phi(gH) (\alpha_{g} a') .
\end{align*}
\end{proof}

\begin{remi}
\label{rem: double_coset}
At the level of $R$-modules, the isomorphism of Theorem \ref{thm: double_coset} is essentially Mackey's subgroup theorem \cite[Thm.\ 10.13]{CurtisRBook1990} applied to the $R[H]$-module $\ResHG A$.
\end{remi}

\begin{remi}
It follows from Theorem \ref{thm: double_coset} that the map
\[
\HGHA \to A^{H} \; , \; \phi \mapsto \phi(H)
\]
is a morphism of $A^{H}$-bimodules, this map corresponding to the projection onto the summand $A^{H}$ on the right hand side of \eqref{eqn: double_coset}.
If $A = R$ and $\alpha$ is trivial then this map is equal to the `trace map'
\[
\mathrm{tr} : \HGH \to R \; , \; \phi \mapsto \phi(H)
\]
defined in \cite[Chap.\ 1, \S 8]{KriegBook1990}, whereas if $H = 1$ this map is equal to the $A$-bimodule morphism
\[
A \rtimes G \to A \; , \; \sum_{g} a_{g} \otimes g \mapsto a_{1} ,
\]
which in the operator algebra literature is often referred to as the `canonical conditional expectation' on $A \rtimes G$, see e.g.\ \cite{Itoh1981}.
\end{remi}
% >>>

% sec: fixed point description <<<
\section{Skew Hecke algebras as fixed point algebras}
\label{sec: fixed_point_algebra}
Let $A[G/H] \cong A \otimes \IndHG R$ be the free left $A$-module on the set $G/H$.
This is an $A \rtimes G$-module with action
\[
(ag) \cdot (bkH) = a (\alpha_{g}b) (gkH) ,
\]
and by restriction $A[G/H]$ is an $A$-module and an $R[G]$-module.
The group $G$ acts $R[G/H] \cong \IndHG R$ by $g \cdot kH = gkH$, and on the $R$-algebra $\End_{R}(R[G/H]) = \End_{R}(\IndHG R)$ by conjugation.
\\
\\
The first isomorphism in the following statement also appears in recent work of Baker \cite[App.\ A]{Baker2024}.
The proof we give is similar, except that we allow $A$ to be non-commutative and so must distinguish between left and right actions of $A$, and between $A$ and $A^{\mathrm{op}}$.

\begin{thm}[{\cite[App.\ A]{Baker2024}}]
\label{thm: fixed_points}
There are $R$-algebra isomorphisms
\[
\HGHA \cong \End_{A \rtimes G}(A[G/H])^{\mathrm{op}} \cong \left( A \otimes \End_{R}(R[G/H])^{\mathrm{op}} \right)^{G} .
\]
\end{thm}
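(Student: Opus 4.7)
The plan is to derive both isomorphisms from the identification
\[
A[G/H] \;\cong\; (A \rtimes G) \otimes_{A \rtimes H} A
\]
of left $A \rtimes G$-modules, where $A$ on the right is viewed as an $A \rtimes H$-module via left multiplication and the restriction of $\alpha$ to $H$. Both sides are cyclic $A \rtimes G$-modules on the distinguished element ($1_A \otimes H$ on one side, $1 \otimes_{A\rtimes H} 1_A$ on the other), and the relation $(1_A \otimes h) \cdot 1_A = 1_A$ in $A$ is matched by $hH = H$ in $G/H$.

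For the first isomorphism $\HGHA \cong \End_{A \rtimes G}(A[G/H])^{\mathrm{op}}$, I would apply the tensor-hom adjunction:
\[
\End_{A \rtimes G}\bigl( (A \rtimes G) \otimes_{A \rtimes H} A \bigr) \;\cong\; \Hom_{A \rtimes H}\bigl( A,\ \Res_{A \rtimes H}^{A \rtimes G} A[G/H] \bigr) .
\]
Since $A$ is a cyclic $A \rtimes H$-module generated by $1_A$ with $(1_A \otimes h) \cdot 1_A = 1_A$ for every $h \in H$, an $A \rtimes H$-linear map from $A$ into an $A \rtimes H$-module $M$ is determined by the image of $1_A$, which must lie in $M^{H}$. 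Applied to $M = A[G/H]$ with the $H$-action \eqref{eqn: action_A_GH}, this yields
\[
\End_{A \rtimes G}(A[G/H]) \;\cong\; A[G/H]^{H} \;\cong\; \HGHA ,
\]
the last step being Proposition \ref{prop: skew_hecke_R_module_isomorphisms}. A direct verification using \eqref{eqn: skew_hecke_product_free} shows that if $\phi_f,\phi_g$ correspond to $f,g$, then $(f \circ g)(1 \otimes H)$ unwinds to $\sum_{g'H} (\phi_g * \phi_f)(g'H) \otimes g'H$, so composition corresponds to \emph{opposite} convolution, accounting for the $\mathrm{op}$.

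For the second isomorphism, since $R[G/H]$ is free of finite rank over $R$, the standard identification of endomorphisms of a free left $A$-module with matrices gives a canonical $R$-algebra isomorphism
\[
A \otimes \End_R(R[G/H])^{\mathrm{op}} \;\xrightarrow{\ \cong\ }\; \End_A(A[G/H])^{\mathrm{op}},
\qquad a \otimes f \,\longmapsto\, \bigl( x \otimes v \mapsto xa \otimes f(v) \bigr) ,
\]
where the opposite on the right absorbs the reversal of composition order introduced by turning the right $\End_R(R[G/H])$-action on $R[G/H]$ into a left action of $\End_R(R[G/H])^{\mathrm{op}}$. One then checks that this map is $G$-equivariant, with $G$ acting diagonally on the left (via $\alpha$ on $A$, and by conjugation on $\End_R(R[G/H])$ through the permutation action $g \cdot kH = gkH$) and on the right by conjugation through the $G$-action on $A[G/H]$; passing to $G$-invariants gives
\[
\bigl( A \otimes \End_R(R[G/H])^{\mathrm{op}} \bigr)^{G} \;\cong\; \End_{A \rtimes G}(A[G/H])^{\mathrm{op}} .
\]

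The principal difficulty is bookkeeping rather than conceptual: one must carefully distinguish left and right actions, $A$ from $A^{\mathrm{op}}$, and track how the various $G$-actions transform under the identifications, all the while keeping the non-commutativity of $A$ under control. Once the two isomorphisms are in hand, the explicit formula is obtained by tracing $\phi \in \HGHA$ through both steps: $\phi$ corresponds to $\tilde\phi \in \End_{A \rtimes G}(A[G/H])$ determined by $\tilde\phi(1 \otimes H) = \sum_{gH} \phi(gH) \otimes gH$, so
\[
\tilde\phi(1 \otimes kH) \;=\; k \cdot \tilde\phi(1 \otimes H) \;=\; \sum_{gH} \alpha_k\!\bigl(\phi(k^{-1}gH)\bigr) \otimes gH
\]
after re-indexing the sum, whose matrix in the basis $\{ 1 \otimes kH \}_{kH \in G/H}$ is precisely $\sum_{gH,kH} \alpha_k(\phi(k^{-1}gH)) \otimes E_{gH,kH}$, as claimed.
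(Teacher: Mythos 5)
Your argument is correct and follows essentially the same route as the paper: both reduce to the evaluation map $\Phi \mapsto \Phi(1_{A}H)$ via Frobenius reciprocity (you package it as the tensor--hom adjunction for $(A \rtimes G) \otimes_{A \rtimes H} A$, the paper as the $\IndHG \dashv \ResHG$ adjunction after first splitting off the free $A$-module structure), both verify that composition corresponds to opposite convolution by the same direct computation, and both obtain the second isomorphism from the finite-rank matrix identification $\End_{A}(A[G/H]) \cong A^{\mathrm{op}} \otimes \End_{R}(R[G/H])$ followed by passage to $G$-invariants. The explicit formula you extract agrees with \eqref{eqn: fixed_points_isom_tensor}.
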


\begin{proof}
We have
\begin{align*}
\End_{A \rtimes G}(A[G/H]) & = \End_{A}(A[G/H])^{G} ,
\end{align*}
where $G$ acts by conjugation on $\End_{A}(A[G/H])$, and there are natural $R$-module isomorphisms
\begin{align*}
\End_{A}(A[G/H])^{G} & = \Hom_{A}(A[G/H],A[G/H])^{G} \\
& \cong \Hom_{R}(R[G/H],A[G/H])^{G} & \text{($A[G/H]$ is free over $A$)} \\
& \cong \Hom_{R}(R,A[G/H])^{H} & \text{($\IndHG$ is left adjoint to $\ResHG$)} \\
& \cong A[G/H]^{H} & \text{($R$ is a trivial $R[H]$-module)}
\end{align*}
the composition of which is the $R$-module isomorphism
\begin{equation}
\label{eqn:isom_End_Hecke}
\End_{A}(A[G/H])^{G} \to A[G/H]^{H} \;,\; \Phi \mapsto \Phi(1_{A}H) .
\end{equation}
We show that the map \eqref{eqn:isom_End_Hecke} is a ring homomorphism from $\End_{A}(A[G/H])^{G}$ to $A[G/H]^{H}$, where $A[G/H]^{H}$ is equipped with the product \eqref{eqn: skew_hecke_product_free}.
Denote by $\circ$ the multiplication in $\End_{A}(A[G/H])$, and $\circ^{\mathrm{op}}$ its opposite.
An endomorphism $\Phi \in \End_{A}(A[G/H])$ can be decomposed uniquely in the form
\[
\Phi = \sum_{gH,kH} R_{\Phi_{gH,kH}} E_{gH,kH}
\]
where $E_{gH,kH}$ is the permutation mapping $kH$ to $gH$ and all other cosets to zero, $\Phi_{gH,kH} \in A$, and for $a \in A$ the map $R_{a}$ is the \emph{right} action of $a$, i.e.\ $R_{a} (b(lH)) = (ba) (lH)$.
In terms of this decomposition, $\Phi$ is $G$-invariant iff
\[
\alpha_{s} \Phi_{s^{-1}gH,s^{-1}kH} = \Phi_{gH,kH}
\]
for all $s \in G$ and $gH,kH \in G/H$.
If two elements $\Phi,\Psi \in \End_{A}(A[G/H])^{G}$ are decomposed in this way then
\[
\Phi(1_{A}H) = \sum_{gH} \Phi_{gH,H} gH \; , \; \Psi(1_{A}H) = \sum_{gH} \Psi_{gH,H} sH 
\]
and
\begin{align*}
(\Phi \circ^{\mathrm{op}} \Psi)(1_{A}H) & = (\Psi \circ \Phi)(1_{A}H) \\
& = \left( \sum_{sH,tH} \sum_{gH,kH} R_{\Psi_{sH,tH}} R_{\Phi_{gH,kH}} E_{sH,tH} E_{gH,kH} \right) (1_{A}H) \\
& = \sum_{sH,gH} \Phi_{gH,H} \, \Psi_{sH,gH} sH \\
& = \sum_{sH,gH} \Phi_{gH,H} \, (\alpha_{g} \Psi_{g^{-1}sH,H}) sH \\
& = \sum_{sH,gH} \Phi_{gH,H} \, (\alpha_{g} \Psi_{sH,H}) (gsH) \\
& = ( \sum_{gH} \Phi_{gH,H} gH ) ( \sum_{sH} \Psi_{sH} sH ) \\
& = \Phi(1_{A}H) * \Psi(1_{A}H)
\end{align*}
where the fourth equality follows from the fact that $\Psi$ is $G$-invariant, and the last equality follows from the formula \eqref{eqn: skew_hecke_product_free} for the product on $A[G/H]^{H}$ corresponding to the convolution product on $\HGHA$.
\\
\\
We also have an $R$-algebra isomorphism
\[
\Hom_{A}(A[G/H],A[G/H])^{G} \cong \left( A^{\mathrm{op}} \otimes \End_{R}(R[G/H]) \right)^{G}
\]
because $R[G/H]$ is a finite rank free $R$-module, and therefore
\[
\left( \Hom_{A}(A[G/H],A[G/H])^{G} \right)^{\mathrm{op}} \cong \left( A \otimes \End_{R}(R[G/H])^{\mathrm{op}} \right)^{G} .
\]
Inverting the isomorphism \eqref{eqn:isom_End_Hecke} then gives an explicit $R$-algebra isomorphism
\begin{equation}
\label{eqn: fixed_points_isom_tensor}
\sum_{gH} a_{gH} gH \mapsto \sum_{gH,kH} \alpha_{k} a_{k^{-1}gH} \otimes E_{gH,kH}
\end{equation}
from $A[G/H]^{H}$ to $(A \otimes \End_{R}(R[G/H])^{\mathrm{op}})^{G}$.
\end{proof}

\begin{remi}
Theorem \ref{thm: fixed_points} reduces to the isomorphism
\[
\HGH \cong \End_{R[G]} (\IndHG R)^{\mathrm{op}} = ( \End_{R} ( \IndHG R)^{\mathrm{op}} )^{G}
\]
if $A = R$ (this is \cite[Chap.\ 1, Thm.\ 4.8]{KriegBook1990}), and to the isomorphism
\[
A \rtimes G \cong \left( A \otimes \End_{R} (R[G])^{\mathrm{op}} \right)^{G}
\]
if $H = 1$ (this is e.g.\ \cite[Prop.\ 2.1]{AriasL2010}).
\end{remi}

Theorem \ref{thm: fixed_points} also gives the following short proof of Proposition \ref{prop:skew_hecke_corner}.

\begin{cor}
\label{cor:corner_ring}
If $|H|$ is a unit in $A$ then $\eeH := \frac{1}{|H|} \sum_{h \in H} 1_{A} h$ is an idempotent in $A \rtimes G$, and there is an $R$-algebra isomorphism
\[
\HGHA \cong \eeH (A \rtimes G) \eeH .
\]
\end{cor}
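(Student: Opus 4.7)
The plan is to reduce Corollary \ref{cor:corner_ring} to Theorem \ref{thm: fixed_points} via the standard dictionary between cyclic projective modules and corner rings. First I would verify that $\eeH$ is an idempotent in $A\rtimes G$: since $\alpha_h(1_A) = 1_A$ for every $h\in H$, the product rule \eqref{eqn: skew_group_mult} gives $(1_A\otimes h)(1_A\otimes h') = 1_A\otimes hh'$, so a one-line computation yields
\[
\eeH^2 \;=\; \frac{1}{|H|^2}\sum_{h,h'\in H} 1_A\otimes hh' \;=\; \frac{1}{|H|}\sum_{k\in H} 1_A\otimes k \;=\; \eeH.
\]

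Next, the crucial step: I would construct a left $A\rtimes G$-module isomorphism $\Phi\colon (A\rtimes G)\eeH \xrightarrow{\cong} A[G/H]$ defined by $x\eeH \mapsto x\cdot(1_A H)$. This map is well defined and $A\rtimes G$-linear because $\eeH$ stabilises $1_A H$, which is the short computation $\eeH\cdot(1_A H) = \frac{1}{|H|}\sum_{h\in H}(hH) = 1_A H$ (using $hH = H$). To show $\Phi$ is bijective, I would observe that $g\eeH$ depends only on the coset $gH$ (since $g\eeH = \frac{1}{|H|}\sum_h 1_A\otimes gh$), so that $\{g\eeH : gH\in G/H\}$ is a free left $A$-basis of the summand $(A\rtimes G)\eeH$, and it is mapped bijectively by $\Phi$ onto the free left $A$-basis $\{gH : gH\in G/H\}$ of $A[G/H]$.

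Finally, I would invoke the classical identification $\End_B(Be)^{\mathrm{op}}\cong eBe$, valid for any idempotent $e$ in any unital ring $B$ (the isomorphism sends $\phi\mapsto\phi(e)$, with inverse sending $y\in eBe$ to right multiplication by $y$; the opposite appears because composition of right multiplications reverses order). Combining this with Theorem \ref{thm: fixed_points} yields the chain
\[
\HGHA \;\cong\; \End_{A\rtimes G}(A[G/H])^{\mathrm{op}} \;\cong\; \End_{A\rtimes G}\bigl((A\rtimes G)\eeH\bigr)^{\mathrm{op}} \;\cong\; \eeH(A\rtimes G)\eeH,
\]
as required. The only genuinely non-routine step is the module identification $(A\rtimes G)\eeH\cong A[G/H]$; once this is in hand, the remainder is bookkeeping around the standard corner-ring formalism.
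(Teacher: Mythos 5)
Your proposal is correct and follows essentially the same route as the paper's own proof: verify the idempotency of $\eeH$, identify $A[G/H] \cong (A \rtimes G)\eeH$ as left $A \rtimes G$-modules, and conclude via $\End_{B}(Be)^{\mathrm{op}} \cong eBe$ together with Theorem \ref{thm: fixed_points}. In fact you supply more detail than the paper does for the module identification (which the paper simply asserts), and that detail is accurate.
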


\begin{proof}
That $\eeH$ is an idempotent in $A \rtimes G$ follows from the fact that $(1_{A} h) (1_{A} h') = 1_{A} hh'$ for $h,h' \in H$.
As a left $A \rtimes G$-module, $A[G/H] \cong (A \rtimes G) \eeH$.
Then
\[
\End_{A \rtimes G}(A[G/H])^{\mathrm{op}} \cong \End_{A \rtimes G}( (A \rtimes G) \eeH )^{\mathrm{op}} \cong \eeH (A \rtimes G) \eeH
\]
and the statement follows from Theorem \ref{thm: fixed_points}.
\end{proof}

\begin{remi}
As $\IndHG R$ is a free $R$-module of rank $n := |G/H|$, Theorem \ref{thm: fixed_points} implies that
\[
\HGHA \cong \left( A \otimes M_{n}(R) \right)^{G} \cong M_{n}(A)^{G}
\]
where $G$ acts on $M_{n}(R)$ via the isomorphism $M_{n}(R) \cong \End_{R}(\IndHG R)$ determined by choosing an ordering of the cosets of $H$.
In particular, $\HGHA$ is isomorphic to a $G$-fixed point subring of a ring Morita equivalent to $A$.
\end{remi}

\begin{cor}
\label{cor: fixed_points_Yen}
With notation as in the proof of Theorem \ref{thm: fixed_points}, there is an injective $R$-algebra morphism
\begin{align}
\label{eqn: fixed_points_Yen}
A^{H} & \to \left( A \otimes \End_{R} (R[G/H]) \right) ^{G} \\
\nonumber
a & \mapsto \sum_{gH} \alpha_{g} a \otimes E_{gH,gH} .
\end{align}
\end{cor}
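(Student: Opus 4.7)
The plan is to obtain the map as the composition of the $R$-algebra inclusion $A^{H} \hookrightarrow \HGHA$ of \eqref{eqn: inclusion_general_AH} with the explicit isomorphism $\HGHA \xrightarrow{\cong} (A \otimes \End_{R}(R[G/H])^{\mathrm{op}})^{G}$ from Theorem \ref{thm: fixed_points}, and then to verify that the image actually lies in $(A \otimes \End_{R}(R[G/H]))^{G}$, not merely in the opposite-algebra version, so that the resulting map is a genuine $R$-algebra homomorphism with the stated target.

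First I would take $a \in A^{H}$ and evaluate the isomorphism \eqref{eqn: fixed_points_isom_tensor} on $\delta_{H,a} \in \HGHA$. Since $\delta_{H,a}(k^{-1}gH) = a$ when $gH = kH$ and vanishes otherwise, the only surviving terms in $\sum_{gH,kH} \alpha_{k} \delta_{H,a}(k^{-1}gH) \otimes E_{gH,kH}$ are the diagonal ones, yielding exactly $\sum_{gH} \alpha_{g} a \otimes E_{gH,gH}$, matching the formula in \eqref{eqn: fixed_points_Yen}. $G$-invariance is immediate from $\alpha_{s}\alpha_{g}a \otimes s E_{gH,gH} s^{-1} = \alpha_{sg}a \otimes E_{sgH,sgH}$ and reindexing $g' = sg$.

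The key observation is that the image of $A^{H}$ lands in the $R$-subalgebra of $A \otimes \End_{R}(R[G/H])$ generated by $A \otimes 1$ and the commuting idempotents $\{1_{A} \otimes E_{gH,gH}\}_{gH \in G/H}$, and on this subalgebra the product and the opposite product coincide, because $E_{gH,gH} E_{kH,kH} = \delta_{gH,kH} E_{gH,gH}$ is symmetric in its two arguments. Consequently, the composition $a \mapsto \delta_{H,a} \mapsto \sum_{gH} \alpha_{g}a \otimes E_{gH,gH}$, which is a priori a homomorphism into $(A \otimes \End_{R}(R[G/H])^{\mathrm{op}})^{G}$ by Theorem \ref{thm: fixed_points}, is simultaneously a homomorphism into $(A \otimes \End_{R}(R[G/H]))^{G}$. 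A direct check confirms this: for $a,b \in A^{H}$,
\[
\Bigl( \sum_{gH} \alpha_{g}a \otimes E_{gH,gH} \Bigr)\Bigl( \sum_{kH} \alpha_{k}b \otimes E_{kH,kH} \Bigr) = \sum_{gH} \alpha_{g}(ab) \otimes E_{gH,gH} .
\]

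For injectivity, the inclusion \eqref{eqn: inclusion_general_AH} is injective and the isomorphism of Theorem \ref{thm: fixed_points} is an isomorphism, so the composition is injective; alternatively, the coefficient of $E_{H,H}$ recovers $a$ from its image. The only substantive point in the argument is the opposite-algebra check in the middle step; everything else is bookkeeping.
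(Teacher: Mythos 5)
Your proposal is correct and follows essentially the same route as the paper: the map is obtained as the composition of the inclusion $A^{H} \to \HGHA$ from \S\ref{sec: two_subalgebras} with the explicit isomorphism \eqref{eqn: fixed_points_isom_tensor} of Theorem \ref{thm: fixed_points}. Your additional check that the image lands in $(A \otimes \End_{R}(R[G/H]))^{G}$ rather than only in the opposite-algebra version --- because the diagonal idempotents $E_{gH,gH}$ commute, so the two products agree on the image --- is a genuine subtlety that the paper's one-line proof leaves implicit, and it is handled correctly.
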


\begin{proof}
The map \eqref{eqn: fixed_points_Yen} in the statement is equal to the composition of the $R$-algebra morphism $A^{H} \to \HGHA$, $a \mapsto a \otimes H$ (see \S \ref{sec: two_subalgebras}) and the isomorphism \eqref{eqn: fixed_points_isom_tensor} of Theorem \ref{thm: fixed_points}.
\end{proof}

\begin{remi}
\label{rem: Yen_QRF}
The morphism \eqref{eqn: fixed_points_Yen} of Corollary \ref{cor: fixed_points_Yen} appears as the relativisation map `$\yen$' in joint work of the authors and G\l{}owacki \cite[Prop.\ III.1]{GlowackiLW2024}.
\end{remi}

% >>>

% sec: functions on G <<<
\section{Functions on $G$}
\label{sec: stone}
Throughout the present section we denote by $RG$ the \emph{commutative} $R$-algebra of $R$-valued functions on $G$ equipped with the pointwise product.
As an $R$-module, $RG$ is free with a basis given by the set of indicator functions $\delta_{g}$.
Under the pointwise product $\delta_{g} \delta_{k}$ is equal to $\delta_{g}$ if $g = k$, and is zero otherwise.
The group $G$ acts on $RG$ by $(g \cdot f) (k) = f(g^{-1}k)$, which is equivalent to $g \cdot \delta_k = \delta_{gk}$.

\begin{lem}
\label{lem: RG_B}
If $B$ is an $R$-algebra equipped with an action of $G$ then the map
\begin{align}
\label{eqn: RG_B}
B & \to \left( RG \otimes B \right)^{G} \\
\nonumber
b & \mapsto \sum_{g \in G} \delta_{g} \otimes (g \cdot b) 
\end{align}
is an $R$-algebra isomorphism.
\end{lem}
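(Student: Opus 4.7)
The plan is to verify directly that the map in \eqref{eqn: RG_B} lands in the $G$-invariants, is a unital $R$-algebra homomorphism, and is a bijection. Since $RG$ is free as an $R$-module on the basis $\{\delta_g\}_{g \in G}$, any element of $RG \otimes B$ has a unique expression $\sum_g \delta_g \otimes b_g$ with $b_g \in B$, and all computations reduce to manipulations of such expansions together with the relations $g \cdot \delta_k = \delta_{gk}$ and $\delta_g \delta_k = \delta_g$ if $g=k$, zero otherwise.

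First I would check that the image is $G$-invariant: applying $h$ to $\sum_g \delta_g \otimes (g \cdot b)$ and reindexing $g' = hg$ recovers the same expression. Next I would verify multiplicativity, which is the main computation. Using the fact that the basis elements $\delta_g$ satisfy $\delta_g \delta_k = \delta_{g,k}\, \delta_g$ under the pointwise product, the product of the images of $b$ and $b'$ collapses to a single sum
\[
\sum_g \delta_g \otimes (g \cdot b)(g \cdot b') = \sum_g \delta_g \otimes \bigl(g \cdot (bb')\bigr),
\]
which is the image of $bb'$. Unitality follows from $\sum_g \delta_g = 1_{RG}$ and the fact that $g \cdot 1_B = 1_B$ for each $g$, since $G$ acts by $R$-algebra automorphisms.

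For bijectivity, the key step is to classify the $G$-invariants in $RG \otimes B$. If $\sum_g \delta_g \otimes b_g$ is $G$-invariant, applying $h$ gives $\sum_g \delta_{hg} \otimes (h \cdot b_g) = \sum_g \delta_g \otimes b_g$; comparing coefficients of $\delta_g$ yields $b_g = g \cdot b_1$, so every $G$-invariant element is determined by a single element $b_1 \in B$ and is indeed of the form $\sum_g \delta_g \otimes (g \cdot b_1)$. The inverse to \eqref{eqn: RG_B} is thus the projection picking out the $\delta_1$-coefficient.

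The only subtlety worth flagging is keeping the $G$-action on $RG$ straight: with the convention $(g \cdot f)(k) = f(g^{-1}k)$, one has $g \cdot \delta_k = \delta_{gk}$, which is what makes the reindexing in the invariance and multiplicativity computations go through cleanly. Once that is set up, no step is hard; the result is really a statement that $RG$, regarded as a $G$-module, corepresents the functor $B \mapsto (RG \otimes B)^G$ on $R$-algebras with $G$-action, and it can equivalently be viewed as the standard descent/faithfully-flat isomorphism for the trivial $G$-torsor $G \to \mathrm{pt}$.
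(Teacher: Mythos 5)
Your proof is correct and follows essentially the same route as the paper's: bijectivity via the observation that $G$-invariance of $\sum_g \delta_g \otimes b_g$ forces $b_g = g \cdot b_1$, and multiplicativity via the orthogonality relation $\delta_g \delta_k = 0$ for $g \neq k$. The extra remarks on unitality and the descent interpretation are fine but not needed.
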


\begin{proof}
An element $\sum_{g} \delta_{g} \otimes b_{g}$ of $RG \otimes B$ is $G$-invariant if and only if
\[
\sum_g \delta_{kg} \otimes ( k \cdot b_g ) = \sum_g \delta_g \otimes b_g
\]
for all $k \in G$, which is equivalent to the condition that $b_k = k \cdot b_{1}$ for each $k \in G$; this shows that \eqref{eqn: RG_B} is an isomorphism.
The map \eqref{eqn: RG_B} is multiplicative because if $b,b' \in B$ then
\begin{align*}
\left( \sum_{g \in G} \delta_{g} \otimes (g \cdot b) \right) \left( \sum_{k \in G} \delta_{k} \otimes (k \cdot b') \right) & = \sum_{g,k \in G} \delta_{g} \delta_{k} \otimes (g \cdot b) (k \cdot b') \\
& = \sum_{g \in G} \delta_{g} \otimes g \cdot (bb') .
\qedhere
\end{align*}
\end{proof}

\begin{prop}[{c.f.\ \cite[Thm.\ 7.0.3]{PalmaBook2018}}]
\label{prop: stone}
For $\alpha : G \to \Aut_{R\text{-alg}}(RG)$ the group homomorphism determined by $(\alpha_{g} \phi)  (k) = \phi (g^{-1}k)$ for $g,k \in G$ and $\phi \in RG$, there is an isomorphism of $R$-algebras
\[
\mathcal{H}_{R}(G,H,RG,\alpha) \cong \End_{R} (R[G/H]) .
\]
\end{prop}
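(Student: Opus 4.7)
The plan is to combine Theorem \ref{thm: fixed_points} with Lemma \ref{lem: RG_B}, and then dispatch a minor discrepancy involving an opposite algebra.

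First, I would apply Theorem \ref{thm: fixed_points} with $A = RG$ to obtain
\[
\mathcal{H}_R(G,H,RG,\alpha) \;\cong\; \bigl(RG \otimes \End_{R}(R[G/H])^{\mathrm{op}}\bigr)^{G},
\]
where $G$ acts diagonally, via the shift action $(g \cdot f)(k) = f(g^{-1}k)$ on $RG$ (which is precisely the action $\alpha$ specified in the proposition), and by conjugation on $\End_{R}(R[G/H])^{\mathrm{op}}$.

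Next, I would apply Lemma \ref{lem: RG_B} with $B = \End_{R}(R[G/H])^{\mathrm{op}}$ equipped with the conjugation $G$-action. The lemma yields an $R$-algebra isomorphism
\[
\End_{R}(R[G/H])^{\mathrm{op}} \;\xrightarrow{\cong}\; \bigl(RG \otimes \End_{R}(R[G/H])^{\mathrm{op}}\bigr)^{G},
\]
so composing the two isomorphisms gives $\mathcal{H}_R(G,H,RG,\alpha) \cong \End_{R}(R[G/H])^{\mathrm{op}}$. The compatibility of the two $G$-actions on $RG$ (the one in Lemma \ref{lem: RG_B} and the one in Proposition \ref{prop: stone}) is immediate, since both are defined by $(g \cdot f)(k) = f(g^{-1}k)$.

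Finally, I would remove the $\mathrm{op}$ by noting that $R[G/H]$ is free of finite rank $n = |G/H|$, so any choice of ordering of the cosets of $H$ gives $\End_{R}(R[G/H]) \cong M_{n}(R)$, and matrix transposition is an $R$-algebra anti-isomorphism $M_{n}(R) \cong M_{n}(R)^{\mathrm{op}}$. Hence $\End_{R}(R[G/H])^{\mathrm{op}} \cong \End_{R}(R[G/H])$, completing the chain of isomorphisms. There is no real obstacle here: the proposition is essentially a direct corollary of Theorem \ref{thm: fixed_points} and Lemma \ref{lem: RG_B}, with the only subtlety being the cosmetic step of passing from $\End_R(R[G/H])^{\mathrm{op}}$ to $\End_R(R[G/H])$ via transposition, which is harmless because the module is finite-rank free.
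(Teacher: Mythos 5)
Your proposal is correct and follows essentially the same route as the paper: apply Theorem \ref{thm: fixed_points} with $A = RG$ and then Lemma \ref{lem: RG_B} with $B = \End_{R}(R[G/H])^{\mathrm{op}}$. Your explicit handling of the $\mathrm{op}$ via transposition is if anything slightly more careful than the paper, which silently identifies $\End_{R}(R[G/H])^{\mathrm{op}}$ with $\End_{R}(R[G/H])$ (the identification is also $G$-equivariant, as the paper notes elsewhere in the proof of Proposition \ref{prop: op_alg_isom}, so it could equally be applied before taking invariants).
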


\begin{proof}
By Theorem \ref{thm: fixed_points} there is an $R$-algebra isomorphism
\[
\mathcal{H}_{R}(G,H,RG,\alpha) \cong \left( RG \otimes \End_{R} (R[G/H]) \right)^{G} 
\]
and by Lemma \ref{lem: RG_B} there is an $R$-algebra isomorphism
\[
\left( RG \otimes \End_{R} (R[G/H]) \right)^{G} \cong \End_{R} (R[G/H]) .
\qedhere
\]
\end{proof}

\begin{cor}
\label{cor: skew_hecke_stone}
The algebra $\mathcal{H}_{R}(G,H,RG,\alpha)$ is isomorphic to the algebra $M_{n}(R)$ of $n \times n$ matrices over $R$ for $n = |G/H|$.
In particular, $\mathcal{H}_{R}(G,H,RG,\alpha)$ is Morita equivalent to $R$, and is simple if $R$ is a field.
\end{cor}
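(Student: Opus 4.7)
The plan is to deduce this corollary almost immediately from Proposition \ref{prop: stone}. That proposition gives an $R$-algebra isomorphism
\[
\mathcal{H}_{R}(G,H,RG,\alpha) \cong \End_{R}(R[G/H]) ,
\]
so the only remaining content is to identify $\End_{R}(R[G/H])$ with $M_{n}(R)$ and then invoke standard facts about matrix rings.

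First I would observe that $R[G/H]$ is a free $R$-module of rank $n = |G/H|$ with basis given by the cosets $gH$, so that fixing an ordering $g_1H, \dots, g_nH$ of these cosets yields an $R$-algebra isomorphism $\End_{R}(R[G/H]) \cong M_{n}(R)$ by the usual matrix representation of an endomorphism of a finite rank free module. Composing with the isomorphism of Proposition \ref{prop: stone} gives the first claim.

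For the remaining assertions I would simply cite well-known facts: for any commutative ring $R$, the matrix ring $M_{n}(R)$ is Morita equivalent to $R$ via the progenerator $R^{n}$, and if $R$ is a field then $M_{n}(R)$ is a simple ring (indeed a central simple algebra over $R$). Both statements transfer to $\mathcal{H}_{R}(G,H,RG,\alpha)$ through the isomorphism just constructed. There is no substantial obstacle here; the corollary is essentially a direct reading of Proposition \ref{prop: stone} combined with elementary matrix-ring facts.
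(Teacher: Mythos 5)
Your proposal is correct and is exactly the argument the paper intends: Corollary \ref{cor: skew_hecke_stone} is stated without a separate proof precisely because it follows from Proposition \ref{prop: stone} together with the observation that $R[G/H]$ is free of rank $n=|G/H|$, so $\End_{R}(R[G/H])\cong M_{n}(R)$, and the Morita equivalence and simplicity claims are standard facts about matrix rings. No gaps.
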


\begin{remi}
\label{remi: palma_stone}
In the C*-algebra setting Palma has proven a generalisation of Proposition \ref{prop: stone} for general Hecke pairs $H \le G$ with $G$ possibly infinite, see \cite[Thm.\ 7.0.3]{PalmaBook2018} or \cite[Thm.\ 5.3]{Palma2013}.
In the setup and notation of \cite{PalmaBook2018}, Palma's `Stone-von Neumann theorem' is that
\[
C_0(G/\Gamma) \rtimes_{\alpha} G/\Gamma \cong C_0(G/\Gamma) \rtimes_{\alpha,r} G/\Gamma \cong \mathcal{K} (l^2(G/\Gamma))
\]
where the first two terms are particular C*-algebra completions of the skew Hecke algebra $\mathcal{H}_{\mathbb{C}} (G,\Gamma,C_0(G),\alpha)$, and $\mathcal{K}(l^2(G/\Gamma))$ is the C*-algebra of compact operators on the Hilbert space $l^2(G/\Gamma)$.
\end{remi}
% >>>

% sec: change of groups <<<
\section{Group operations}
\label{sec: change_groups}
In this section we prove several results concerning the behaviour the skew Hecke algebra $\HGHA$ under certain group-theoretic operations: factoring out a normal subgroup $N \le G$ contained in $H$, replacing $G$ by a product group $G_1 \times G_2$, replacing $G$ by a subgroup $K \le G$ containing $H$, or replacing $H$ by a conjugate subgroup $H'$.
We also prove a result relating skew Hecke algebras to semi-direct products of groups.

% subsec: factoring out normal subgroups contained in H <<<
\subsection{Factoring out normal subgroups}
\label{sec: factoring_out_normal_subgroups}
The following result is an analogue of the corresponding result \cite[Chap.\ 1, Cor.\ 6.2]{KriegBook1990} for Hecke algebras, and is a generalisation of Proposition \ref{prop: special_cases} \eqref{it: spec_case.N}.

\begin{prop}[c.f.\ {\cite[Chap.\ 1, Cor.\ 6.2]{KriegBook1990}}]
\label{prop: factoring_out_normal}
If $N \le G$ is a normal subgroup and $N \subseteq H$ then there is an isomorphism
\[
\mathcal{H}_{R} (G,H,A,\alpha) \cong \mathcal{H}_{R} (G/N,H/N,A^{N},\alpha^{N}) 
\]
where $\alpha^{N} : G/N \to \Aut_{R\text{-alg}}(A)$ is the homomorphism $gN \mapsto \alpha_{g}$.
\end{prop}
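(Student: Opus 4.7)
The plan is to write down the obvious candidate isomorphism induced by the third isomorphism theorem bijection $G/H \cong (G/N)/(H/N)$, verify that it lands in the correct fixed-point subalgebra, and then check multiplicativity directly from the convolution formula \eqref{eqn: skew_hecke_product}.

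First I would observe that because $N \subseteq H$, any $\phi \in \HGHA$ automatically takes values in $A^{N}$: for $n \in N$ and $gH \in G/H$,
\[
\alpha_{n} \phi(gH) = \phi(ngH) = \phi(gH),
\]
since $n \in H$. I would also note that because $N$ is normal in $G$, the subalgebra $A^{N}$ is preserved by the full $G$-action, so the formula $\alpha^{N}_{gN} := \alpha_{g}|_{A^{N}}$ indeed gives a well-defined homomorphism $\alpha^{N}: G/N \to \Aut_{R\text{-alg}}(A^{N})$.

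Next I would define the candidate map
\[
\Theta : \mathcal{H}_{R}(G,H,A,\alpha) \longrightarrow \mathcal{H}_{R}(G/N,H/N,A^{N},\alpha^{N}), \qquad (\Theta\phi)(gN \cdot H/N) := \phi(gH).
\]
Well-definedness is immediate from the natural bijection between $G/H$ and $(G/N)/(H/N)$; the target values lie in $A^{N}$ by the observation above; and the $H/N$-equivariance $(\Theta\phi)(hN \cdot gN \cdot H/N) = \alpha^{N}_{hN}(\Theta\phi)(gN \cdot H/N)$ follows immediately from $\phi(hgH) = \alpha_{h}\phi(gH)$. The inverse is defined by the same formula read in the opposite direction, so $\Theta$ is an $R$-module isomorphism.

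For multiplicativity I would use the fact that a set of representatives of the left cosets of $H$ in $G$ is simultaneously a set of representatives of the left cosets of $H/N$ in $G/N$; under this identification $\alpha^{N}_{kN} = \alpha_{k}$ on $A^{N}$, and so
\[
(\Theta\phi * \Theta\psi)(gN \cdot H/N) = \sum_{kN \cdot H/N} (\Theta\phi)(kN \cdot H/N)\, \alpha^{N}_{kN} (\Theta\psi)(k^{-1}gN \cdot H/N)
\]
reproduces exactly the defining sum for $(\phi * \psi)(gH)$. Sending the unit $\delta_{H,1_{A}}$ to $\delta_{H/N,1_{A}}$ is clear, so $\Theta$ is an $R$-algebra isomorphism. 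I do not expect any genuine obstacle here; the only mild bookkeeping point is being careful that the coset representatives match on both sides and that $A^{N}$ is genuinely $G$-stable, both of which are handled by normality of $N$ together with the inclusion $N \subseteq H$.
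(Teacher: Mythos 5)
Your proof is correct, but it takes a genuinely different route from the paper. You argue directly at the level of the convolution algebra: you use the bijection $G/H \cong (G/N)/(H/N)$ to define the map $\Theta$, observe that every $\phi \in \HGHA$ already takes values in $A^{N}$, and match the convolution sums term by term via a common set of coset representatives. The paper instead invokes its fixed-point description $\HGHA \cong \bigl( A \otimes \End_{R}(\IndHG R) \bigr)^{G}$ from Theorem \ref{thm: fixed_points}: since $N$ acts trivially on $\IndHG R$ (again because $N$ is normal and $N \le H$), taking $N$-invariants first gives $\bigl( A^{N} \otimes \End_{R}(\IndHG R) \bigr)^{G/N}$, and $\IndHG R \cong \Ind_{H/N}^{G/N} R$ as $R[G/N]$-modules. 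The paper's argument is shorter and fits the result into a uniform framework (the same device proves Propositions \ref{prop: products}, \ref{prop: skew_hecke_fun_conjugation} and \ref{prop: skew_hecke_semi_direct}), but it quietly relies on the identity $(A \otimes E)^{N} = A^{N} \otimes E$ for $E$ a free $R$-module with trivial $N$-action; your computation is entirely elementary and avoids any such tensor-product considerations, at the cost of some bookkeeping. One small imprecision: in your first display, the equality $\phi(ngH) = \phi(gH)$ is not a consequence of $n \in H$ alone — it needs $ngH = gH$, i.e.\ $g^{-1}ng \in H$, which is exactly where normality of $N$ together with $N \subseteq H$ enters. You do use normality correctly elsewhere, so this is a matter of attribution rather than a gap.
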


\begin{proof}
The subgroup $N \le G$ acts trivially on $\IndHG R$ because $N$ is normal and $N \le H$, and as an $R[G/N]$-module $\IndHG R \cong \Ind_{H/N}^{G/N} R$.
Then
\[
( A \otimes \End_R(\IndHG) )^{G} =
( A^{N} \otimes \End_R(\IndHG) )^{G/N} =
( A^{N} \otimes \End_R(\Ind_{H/N}^{G/N} R) )^{G/N}
\]
which is isomorphic to $\mathcal{H}_{R}(G/N,H/N,A,\alpha^{N})$.
\end{proof}
% >>>

% subsec: product groups <<<
\subsection{Product groups}
\label{sec: product_groups}
The following result is an analogue of the corresponding result for Hecke algebras \cite[Chap.\ 1, Thm.\ 6.3]{KriegBook1990}.

\begin{prop}[c.f.\ {\cite[Chap.\ 1, Thm.\ 6.3]{KriegBook1990}}]
\label{prop: products}
Suppose that $(G_1,H_1,A_1,\alpha_1)$ and $(G_2,H_2,A_2,\alpha_2)$ are tuples as in \S \ref{sec: setup_tuples}, and $\alpha_1 \otimes \alpha_2$ is the group homomorphism
\begin{align*}
\alpha_1 \otimes \alpha_2 : G_1 \times G_2 & \to \Aut_{R\text{-alg}}(A_1 \otimes A_2) \\
(g_1,g_2) & \mapsto \alpha_1(g_1) \otimes \alpha_2(g_2) .
\end{align*}
If either $A_1$ and $A_2$ are free as $R$-modules, or $|H_1|$ and $|H_2|$ are units in $R$, then the skew Hecke algebra
\[
\mathcal{H}_{R}(G_1 \times G_2,H_1 \times H_2,A_1 \otimes A_2,\alpha_1 \otimes \alpha_2) 
\]
is isomorphic to the tensor product of $R$-algebras
\[
\mathcal{H}(G_1,H_1,A_1,\alpha_1) \otimes \mathcal{H}(G_2,H_2,A_2,\alpha_2) .
\]
\end{prop}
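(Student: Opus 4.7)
The plan is to reduce both cases to the natural $R$-algebra isomorphism
\[
(A_1 \otimes A_2) \rtimes_{\alpha_1 \otimes \alpha_2} (G_1 \times G_2) \cong (A_1 \rtimes_{\alpha_1} G_1) \otimes_R (A_2 \rtimes_{\alpha_2} G_2),
\]
$(a_1 \otimes a_2)(g_1,g_2) \leftrightarrow (a_1 g_1) \otimes (a_2 g_2)$, which follows directly from the skew group algebra product (\ref{eqn: skew_group_mult}), and to then invoke the structural description of the skew Hecke algebra most convenient to each hypothesis.

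In the case where $|H_1|$ and $|H_2|$ are units in $R$, I would apply the corner-ring description of Proposition \ref{mthm: skew_hecke_corner}. Under the isomorphism above, the idempotent $\mathbf{e}_{H_1 \times H_2} = \frac{1}{|H_1||H_2|} \sum_{(h_1,h_2)} 1 \otimes (h_1,h_2)$ corresponds to $\mathbf{e}_{H_1} \otimes \mathbf{e}_{H_2}$, and compressing therefore factors:
\[
\mathbf{e}_{H_1 \times H_2}\bigl((A_1 \otimes A_2) \rtimes (G_1 \times G_2)\bigr)\mathbf{e}_{H_1 \times H_2} \cong \mathbf{e}_{H_1}(A_1 \rtimes G_1)\mathbf{e}_{H_1} \otimes_R \mathbf{e}_{H_2}(A_2 \rtimes G_2)\mathbf{e}_{H_2}.
\]
A second application of Proposition \ref{mthm: skew_hecke_corner} identifies each side with the relevant skew Hecke algebra, making this half of the proof essentially formal.

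In the case where $A_1, A_2$ are free $R$-modules, I would instead use the double coset decomposition of Theorem \ref{thm: double_coset}. The double cosets of $H_1 \times H_2$ in $G_1 \times G_2$ are exactly the products $H_1 g_1 H_1 \times H_2 g_2 H_2$, with stabilisers $(H_1 \cap g_1 H_1 g_1^{-1}) \times (H_2 \cap g_2 H_2 g_2^{-1})$, so both sides split as direct sums indexed by the same set of pairs of double cosets. The statement then reduces to the commutation identity
\[
(A_1 \otimes A_2)^{K_1 \times K_2} \cong A_1^{K_1} \otimes_R A_2^{K_2}
\]
for arbitrary subgroups $K_i \le H_i$. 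This last identity is the step I expect to be the main obstacle: the natural approach is to exploit that $A_2$ is flat (being free) so that tensoring with $A_2$ preserves the kernel of the $R$-linear map $A_1 \to \bigoplus_{k \in K_1} A_1$, $a \mapsto (ka - a)_k$, giving $(A_1 \otimes A_2)^{K_1 \times 1} = A_1^{K_1} \otimes A_2$, and then iterate the argument for the $K_2$-action. Finally, I would check compatibility of the algebra structures by tracking the convolution product of Proposition \ref{prop: skew_hecke_R_module_isomorphisms} through the identifications, using the factorisation $(A_1 \otimes A_2)[(G_1 \times G_2)/(H_1 \times H_2)] \cong A_1[G_1/H_1] \otimes_R A_2[G_2/H_2]$ to separate tensor factors before taking $H_1 \times H_2$-invariants.
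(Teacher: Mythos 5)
Your corner-ring half coincides exactly with the paper's argument: the paper likewise factorises $(A_1 \otimes A_2) \rtimes_{\alpha_1\otimes\alpha_2} (G_1\times G_2) \cong (A_1\rtimes_{\alpha_1} G_1)\otimes(A_2\rtimes_{\alpha_2} G_2)$, observes that $\mathbf{e}_{H_1\times H_2}$ corresponds to $\mathbf{e}_{H_1}\otimes\mathbf{e}_{H_2}$, and compresses, invoking Proposition \ref{prop:skew_hecke_corner} on both sides. For the free case the paper takes a different but parallel route: it works with the fixed-point model $(A\otimes\End_{R}(\IndHG R))^{G}$ of Theorem \ref{thm: fixed_points}, uses $\Ind_{H_1\times H_2}^{G_1\times G_2}R\cong\Ind_{H_1}^{G_1}R\otimes\Ind_{H_2}^{G_2}R$, and reduces everything to the claim that $(V_1\otimes V_2)^{G_1\times G_2}=V_1^{G_1}\otimes V_2^{G_2}$ for $R$-free $V_i$; since the isomorphism is a reordering of tensor factors, multiplicativity is automatic. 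Your detour through Theorem \ref{thm: double_coset} buys little: that theorem is only a bimodule isomorphism, so you must verify multiplicativity separately anyway, and your closing sentence --- factorising $(A_1\otimes A_2)[(G_1\times G_2)/(H_1\times H_2)]\cong A_1[G_1/H_1]\otimes A_2[G_2/H_2]$ before taking invariants --- already contains the whole argument, with $V_i=A_i[G_i/H_i]$ free. Both routes hinge on the same commutation lemma.

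The one step that fails as written is the ``iterate'' in your proof of that lemma. Flatness of $A_2$ does give $(A_1\otimes A_2)^{K_1\times 1}=A_1^{K_1}\otimes A_2$, but repeating the argument for the $K_2$-action means tensoring the left-exact sequence $0\to A_2^{K_2}\to A_2\to\bigoplus_{k}A_2$ with $A_1^{K_1}$, and $A_1^{K_1}$ is merely a submodule of a free module, not known to be flat. The symmetric version gives $(A_1\otimes A_2)^{K_1\times K_2}=(A_1^{K_1}\otimes A_2)\cap(A_1\otimes A_2^{K_2})$ inside $A_1\otimes A_2$, but identifying this intersection with the image of $A_1^{K_1}\otimes A_2^{K_2}$ is genuinely delicate over an arbitrary commutative ring; it is immediate when the $A_i^{K_i}$ are $R$-module direct summands (e.g.\ when $|K_i|$ is invertible, which is the other hypothesis) or when $R$ is a field, but not in general. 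The paper asserts the corresponding fact for free modules without proof, so you are no worse off than the source, but as stated your sketch does not close this step, and it is the only real gap in the proposal.
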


\begin{proof}
As an $R[G_1 \times G_2]$-module $\Ind_{H_1 \times H_2}^{G_1 \times G_2}R \cong \Ind_{H_1}^{G_1}R \otimes \Ind_{H_2}^{G_2}R$.
If $V_1$ (resp.\ $V_2$) is an $R[G_1]$ (resp.\ $R[G_2]$)-module, and $V_1,V_2$ are free over $R$, then $(V_1 \otimes V_2)^{G_1 \times G_2} = V_1^{G_1} \otimes V_2^{G_2}$.
It follows that if $A_{1}$ and $A_{2}$ are free over $R$ then by reordering tensor factors we get an isomorphism
\begin{align*}
& ( A_{1} \otimes \End_{R}(\Ind_{H_1}^{G_1}R) )^{G_1} \otimes ( A_{2} \otimes \End_{R}(\Ind_{H_2}^{G_2}R) )^{G_2} \\
& \cong (A_{1} \otimes A_{2} \otimes \End_{R} (\Ind_{H_1 \times H_2}^{G_1 \times G_2}))^{G_1 \times G_2} .
\end{align*}
If $|H_1|$ and $|H_2|$ are units in $R$ then the natural isomorphism
\[
(A_1 \otimes A_2) \rtimes_{\alpha_1 \otimes \alpha_2} (G_1 \times G_2) \cong \left( A_1 \rtimes_{\alpha_1} G_1 \right) \otimes \left( A_2 \rtimes_{\alpha_2} G_2 \right)
\]
given by reordering tensor factors maps the idempotent $\mathbf{e}_{H_1 \times H_2}$ to the tensor product of the idempotents $\mathbf{e}_{H_1}$ and $\mathbf{e}_{H_2}$, and 
\begin{align*}
& (\mathbf{e}_{H_1} \otimes \mathbf{e}_{H_2}) \left( \left( A_1 \rtimes_{\alpha_1} G_1 \right) \otimes \left( A_2 \rtimes_{\alpha_2} G_2 \right) \right) (\mathbf{e}_{H_1} \otimes \mathbf{e}_{H_2}) \\
& \cong \left( \mathbf{e}_{H_1} \left(A_{1} \rtimes_{\alpha_1} G_{1}\right) \mathbf{e}_{H_1} \right)  \otimes \left( \mathbf{e}_{H_2} \left(A_{2} \rtimes_{\alpha_2} G_{2}\right) \mathbf{e}_{H_2} \right) .
\end{align*}
\end{proof}

% >>>

% subsec: intermediate subgroups <<<
\subsection{Intermediate subgroups}
\label{sec: skew_hecke_intermediate_subgroups}
The following result is an analogue of the statement that if $K \le G$ is a subgroup then $A \rtimes K$ is a subalgebra of $A \rtimes G$.

\begin{prop}
\label{prop: intermediate_subgroups}
Suppose that $K \le G$ is a subgroup of $G$ such that $H \subseteq K \subseteq G$.
Extending functions by zero defines an injective $R$-algebra morphism
\[
\mathcal{H}_{R}(K,H,A,\alpha) \hookrightarrow \HGHA
\]
where we denote by the same symbol $\alpha$ the restriction of $\alpha$ to $K$.
\end{prop}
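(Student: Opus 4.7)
\medskip

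The plan is to define the extension-by-zero map explicitly and then check the three things that need checking: that the extension lands in $\HGHA$ (i.e.\ is well-defined and $H$-invariant), that it is injective and $R$-linear, and that it is multiplicative and unital. Given $\phi \in \mathcal{H}_{R}(K,H,A,\alpha)$, I define $\tilde\phi : G/H \to A$ by
\[
\tilde\phi(gH) = \begin{cases} \phi(gH) & \text{if } g \in K , \\ 0 & \text{if } g \notin K . \end{cases}
\]
Since $H \subseteq K$, the condition $g \in K$ depends only on the coset $gH$, so $\tilde\phi$ is well-defined on $G/H$, and the left cosets of $H$ in $K$ correspond bijectively with those $gH \in G/H$ such that $g \in K$.

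First I would verify $H$-invariance. If $g \in K$ then $hg \in K$ for $h \in H$ because $H \subseteq K$, so $\tilde\phi(hgH) = \phi(hgH) = \alpha_h \phi(gH) = \alpha_h \tilde\phi(gH)$. If $g \notin K$ then $hg \notin K$ either, so both sides vanish. Injectivity and $R$-linearity are immediate from the pointwise definition.

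The main step is multiplicativity. Let $\phi,\psi \in \mathcal{H}_{R}(K,H,A,\alpha)$ and compute
\[
(\tilde\phi * \tilde\psi)(gH) = \sum_{lH \in G/H} \tilde\phi(lH) \, \alpha_l \tilde\psi(l^{-1}gH) .
\]
I split on whether $g \in K$. If $g \in K$, then in each summand $\tilde\phi(lH) = 0$ unless $l \in K$, and in that case $l^{-1}g \in K$ iff $g \in K$, so the nonzero summands are exactly those with $l \in K$, giving $\sum_{lH \in K/H} \phi(lH) \alpha_l \psi(l^{-1}gH) = (\phi * \psi)(gH) = \widetilde{\phi*\psi}(gH)$. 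If $g \notin K$, then in each summand either $l \notin K$ (so $\tilde\phi(lH) = 0$) or $l \in K$ and hence $l^{-1}g \notin K$ (so $\tilde\psi(l^{-1}gH) = 0$); thus the whole sum vanishes, matching $\widetilde{\phi*\psi}(gH) = 0$. The identity element $\delta_{H,1_A}$ of $\mathcal{H}_{R}(K,H,A,\alpha)$ obviously extends to the identity element of $\HGHA$.

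There is no real obstacle here; the argument is pure bookkeeping. The only point that needs care is the observation underlying both the well-definedness of the extension and the multiplicativity calculation, namely that $H \subseteq K$ forces the condition ``$g \in K$'' to be stable under left multiplication by $H$ and that for $g \in K$ one has $l \in K \Leftrightarrow l^{-1}g \in K$; without $H \subseteq K$ the extension-by-zero would not even preserve $H$-invariance.
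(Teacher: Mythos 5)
Your proof is correct and follows essentially the same route as the paper: the paper phrases the embedding via the tensor description $(A \otimes R[K/H])^{H} \hookrightarrow (A \otimes R[G/H])^{H}$ and notes that $K/H$ is closed under the product, while you work with the convolution-of-functions description and verify explicitly that the cross terms vanish. Your version spells out the bookkeeping (well-definedness, $H$-invariance, the case split on $g \in K$) that the paper leaves implicit, but the underlying idea is identical.
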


\begin{proof}
There is an injective $H$-equivariant map 
\[
K/H \to G/H \; , \; kH \mapsto gH
\]
so that an element $\sum_{kH \in K/H} a_{kH} \otimes kH$ of $(A \otimes R[K/H])^{H}$ can be considered as an element of $(A \otimes R[G/H])^{H}$.
The product of two such elements in $\mathcal{H}_{R}(K,H,A,\alpha)$ agrees with their product in $\HGHA$, because if $kH,k'H \in K/H$ then $kk'H \in K/H$ also.
\end{proof}

\begin{remi}
Taking $K = H$ in Proposition \ref{prop: intermediate_subgroups} corresponds to the natural morphism from $A^{H}$ to $\HGHA$, see \S \ref{sec: two_subalgebras}.
\end{remi}

% >>>

% subsec: conjugate subgroups <<<
\subsection{Conjugate subgroups}
\label{sec: skew_hecke_fun_conjugation}
The following result is a generalisation of the fact that if $H,H' \le G$ are conjugate subgroups of $G$, then $\HGH \cong \mathcal{H}_{R}(G,H')$.

\begin{prop}
\label{prop: skew_hecke_fun_conjugation}
If $H,H' \le G$ are conjugate subgroups of $G$ then there is an isomorphism of $R$-algebras
\[
\HGHA \cong \mathcal{H}_{R}(G,H',A,\alpha) .
\]
\end{prop}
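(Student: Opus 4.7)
The plan is to reduce the statement to Theorem \ref{thm: fixed_points} and then produce a $G$-equivariant isomorphism between the two relevant induced modules. Write $H' = sHs^{-1}$ for some fixed $s \in G$. By Theorem \ref{thm: fixed_points},
\[
\HGHA \cong \left( A \otimes \End_R(R[G/H])^{\mathrm{op}} \right)^G \; \text{ and } \; \mathcal{H}_R(G, H', A, \alpha) \cong \left( A \otimes \End_R(R[G/H'])^{\mathrm{op}} \right)^G,
\]
so it suffices to exhibit a $G$-equivariant $R$-module isomorphism $R[G/H] \cong R[G/H']$; tensoring with $A$ and taking $G$-invariants will then deliver the required algebra isomorphism.

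The key step is to construct a $G$-equivariant bijection $f : G/H \to G/H'$. I would take $f(gH) := gs^{-1}H'$. Well-definedness is the main computation: if $g_1H = g_2H$, then $g_1^{-1}g_2 \in H$, so $(g_1 s^{-1})^{-1}(g_2 s^{-1}) = s g_1^{-1} g_2 s^{-1} \in sHs^{-1} = H'$, giving $g_1 s^{-1}H' = g_2 s^{-1}H'$. The map is $G$-equivariant because left multiplication by $G$ commutes with right multiplication by $s^{-1}$, and an inverse is given by $gH' \mapsto gsH$ (well-defined by the symmetric calculation using $s^{-1}H's = H$).

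Let $\tilde{f} : R[G/H] \xrightarrow{\sim} R[G/H']$ denote the $R$-linear extension. Conjugation by $\tilde{f}$ gives an $R$-algebra isomorphism $\End_R(R[G/H]) \to \End_R(R[G/H'])$, $\Phi \mapsto \tilde{f} \Phi \tilde{f}^{-1}$, which intertwines the $G$-actions by conjugation (since $\tilde{f}$ is $G$-equivariant, the action of $g \in G$ commutes with $\tilde{f}$) and descends to opposite algebras. Tensoring with $A$, taking $G$-invariants, and applying Theorem \ref{thm: fixed_points} in both directions yields the desired $R$-algebra isomorphism.

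I do not anticipate any real obstacle; the only nontrivial check is that the bijection $f$ respects the coset equivalence relations on both sides, and this is where the hypothesis $H' = sHs^{-1}$ is used. A direct alternative would be to define $\Phi : \HGHA \to \mathcal{H}_R(G, H', A, \alpha)$ by $(\Phi \phi)(gH') := \alpha_s \phi(s^{-1} g s H)$ and verify $H'$-invariance and multiplicativity by reindexing the convolution sum via $k' = sks^{-1}$, but the fixed-point route is conceptually cleaner and avoids explicit manipulation of the convolution.
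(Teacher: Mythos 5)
Your proposal is correct and follows essentially the same route as the paper: both reduce the statement to Theorem \ref{thm: fixed_points} via the $R[G]$-module isomorphism $\IndHG R \cong \Ind_{H'}^{G} R$ for conjugate subgroups, which the paper simply cites as standard and you make explicit via $gH \mapsto gs^{-1}H'$. Your verification of well-definedness, equivariance, and invertibility of that map is accurate.
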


\begin{proof}
If $H$ is conjugate to $H'$ then $\IndHG R \cong \Ind_{H'}^{G} R$ as $R[G]$-modules.
It follows that
\[
\left( A \otimes \End_{R} ( \IndHG R ) \right)^{G} \cong
\left( A \otimes \End_{R} ( \Ind_{H'}^{G} R ) \right)^{G} 
\]
as $R$-algebras, whence $\HGHA \cong \mathcal{H}_{R}(G,H',A,\alpha)$ by Theorem \ref{thm: fixed_points}.
\end{proof}

% >>>

% subsec: semidirect products <<<
\subsection{Semi-direct products}
\label{sec: semi_direct}
\begin{prop}
\label{prop: skew_hecke_semi_direct}
Suppose that $N \rtimes K$ is a semi-direct product group with $K$ finite, and $H \le K$ is a subgroup.
Let $\alpha$ be the $R$-linear extension to $R[N]$ of the action of $K$ on $N$.
There is an $R$-algebra isomorphism
\[
\mathcal{H}_{R}(K,H,R[N],\alpha) \cong \mathcal{H}_{R}(N \rtimes K,H) .
\]
\end{prop}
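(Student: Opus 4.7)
The plan is to construct an explicit $R$-algebra isomorphism by identifying the underlying $R$-modules via the natural bijection between left cosets of $H$ in $N \rtimes K$ and pairs in $N \times (K/H)$, and then to verify that the two multiplication formulas agree. The right-hand side is the ordinary Hecke algebra $\mathcal{H}_R(N \rtimes K, H)$, whose convolution is controlled by the group multiplication in $N \rtimes K$; the left-hand side is the skew Hecke algebra whose convolution is twisted by the $K$-action on $R[N]$. The key observation is that these two twists coincide, because the semi-direct product formula $(n_1, k_1)(n_2, k_2) = (n_1 \alpha_{k_1}(n_2), k_1 k_2)$ encodes precisely the action that defines $\alpha$.

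For the coset identification, embed $H$ in $N \rtimes K$ as $\{(1_N, h) : h \in H\}$. Since $(n, k)(1_N, h) = (n, kh)$, the left coset $(n, k) H$ depends only on $n$ and $kH \in K/H$, giving a bijection $(N \rtimes K)/H \cong N \times (K/H)$ via $(n, k) H \mapsto (n, kH)$. Linear extension then yields an $R$-module isomorphism
\[
R[(N \rtimes K)/H] \cong R[N] \otimes R[K/H].
\]
The identity $(1_N, h)(n, k) = (\alpha_h(n), hk)$ shows that left translation by $h \in H$ corresponds under this isomorphism to the action $h \cdot (n \otimes kH) = \alpha_h(n) \otimes hkH$, which is precisely the $H$-action \eqref{eqn: action_A_RGH} for the tuple $(K, H, R[N], \alpha)$. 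Taking $H$-invariants and applying Proposition \ref{prop: skew_hecke_R_module_isomorphisms} to both sides therefore produces an $R$-module isomorphism
\[
\Phi: \mathcal{H}_R(N \rtimes K, H) \xrightarrow{\;\sim\;} \mathcal{H}_R(K, H, R[N], \alpha).
\]

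For multiplicativity, I would work in the $A[G/H]^H$ presentation and compare products via \eqref{eqn: skew_hecke_product_tensor}. On basis tensors $n \otimes lH$ and $m \otimes jH$ with $n, m \in N$, the Hecke-side product corresponds to the group element $(n, l)(m, j) = (n \alpha_l(m), l j)$, which under $\Phi$ is sent to $n \alpha_l(m) \otimes (lj)H$; the skew Hecke-side product gives the same expression $n \alpha_l(m) \otimes (lj)H$ directly from \eqref{eqn: skew_hecke_product_tensor} with $A = R[N]$. Agreement on these tensors, together with $R$-bilinear extension, completes the proof. I do not expect a genuine obstacle here: the argument is essentially bookkeeping, the only point of substance being the observation that the twist built into the semi-direct product multiplication and the twist $\alpha$ driving the skew Hecke convolution are literally the same $K$-action on $N$, so their matching is automatic once the coset correspondence is in place.
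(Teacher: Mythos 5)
Your proof is correct, but it takes a different route from the paper's. The paper does not verify multiplicativity by hand: it invokes the $R$-algebra isomorphism $R[N \rtimes K] \cong R[N] \rtimes K$, observes that under this identification the module $R[(N \rtimes K)/H]$ becomes the $R[N] \rtimes K$-module $R[N][K/H]$, and then concludes by comparing endomorphism algebras, using Theorem \ref{thm: fixed_points} ($\HGHA \cong \End_{A \rtimes G}(A[G/H])^{\mathrm{op}}$) on one side and the classical $\mathcal{H}_{R}(G,H) \cong \End_{R[G]}(R[G/H])^{\mathrm{op}}$ on the other. Your argument instead works directly with the convolution presentations: the coset bijection $(n,k)H \mapsto (n,kH)$, the check that left translation by $H$ matches the action \eqref{eqn: action_A_RGH}, and the computation $(n,l)(m,j) = (n\,\alpha_{l}(m), lj)$ against \eqref{eqn: skew_hecke_product_tensor} are all correct, and the key conceptual point --- that the twist in the semidirect product multiplication is literally the $K$-action defining $\alpha$ --- is the same observation that makes $R[N \rtimes K] \cong R[N] \rtimes K$ in the paper's version. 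What your approach buys is an explicit, self-contained isomorphism that does not depend on Theorem \ref{thm: fixed_points}; what the paper's buys is brevity, since the multiplicativity check is absorbed into the standard isomorphism of group algebras. One small point worth making explicit in your write-up: when $N$ is infinite, $R[N]$ and $R[(N\rtimes K)/H]$ both denote free modules (finitely supported functions), so your identification still matches the finitely supported convolution algebras on both sides.
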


\begin{proof}
As $R$-algebras $R[N \rtimes K] \cong R[N] \rtimes K$, and under this isomorphism the $R[N \rtimes K]$-module $R[ (N \rtimes K) / H ]$ corresponds to the $R[N] \rtimes K$-module $(R[N]) [K/H]$.
Then
\[
\End_{R[N] \rtimes K} ( R[N][K/H] )
\cong
\End_{R[N \rtimes K]} (R[(N \rtimes K)/H]) .
\]
\end{proof}

% >>>

% >>>

% sec: further properties <<<
\section{Further properties}
\label{sec: further_properties}
In this section we show that the construction of skew Hecke algebras is compatible with restriction and extension of the ground ring and with localisation at a prime ideal $\mathfrak{p} \subset R$, is stable under certain cocycle perturbations of the action $\alpha$, is compatible with gradings and filtrations, and is compatible with the formation of opposite algebras.

% subsec: extension of scalars <<<
\subsection{Restriction and extension of scalars}
\label{sec: ext_scalars}
Let $R$ be a commutative ring and $(G,H,A,\alpha)$ be as in \S \ref{sec: setup_tuples}.
Suppose that $R'$ is a commutative ring and $R$ is an $R'$-algebra.
By restriction, $A$ is an $R'$-algebra, the action $\alpha$ is $R'$-linear, and
\[
\mathcal{H}_{R'}(G,H,A,\alpha) = \mathcal{H}_{R}(G,H,A,\alpha)
\]
as $R'$-algebras.

\begin{prop}
\label{prop: ext_scalars}
Suppose that $S$ is a commutative $R$-algebra.
Let $\alpha_{S}$ be group homomorphism
\[
\alpha_S : G \to \Aut_{S\text{\emph{-alg}}}(S \otimes_{R} A) \; , \; \alpha_S(g) = \mathrm{id}_S \otimes \alpha(g) .
\]
If either $S$ is a free $R$-module, or $|H|$ is a unit in $A$ then there is an isomorphism of $S$-algebras
\[
S \otimes_{R} \HGHA \cong \mathcal{H}_{S}(G,H,S \otimes_{R} A,\alpha_S) .
\]
\end{prop}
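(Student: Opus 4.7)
The plan is to construct a natural candidate for the isomorphism and then verify bijectivity under each hypothesis separately. I would define the $S$-linear map
$$\Theta : S \otimes_{R} \HGHA \to \mathcal{H}_{S}(G,H,S \otimes_{R} A,\alpha_{S}) \;,\; s \otimes \phi \mapsto \bigl(gH \mapsto s \otimes \phi(gH)\bigr).$$
Since $\alpha_{S}$ acts trivially on the $S$-factor, the image is $H$-invariant with respect to $\alpha_{S}$, and a direct comparison of the convolution formulas \eqref{eqn: skew_hecke_product} for the two algebras shows $\Theta$ is a homomorphism of $S$-algebras; this check requires neither hypothesis. The substance of the proof is in showing $\Theta$ is bijective.

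For the case where $S$ is a free $R$-module, I would use Proposition \ref{prop: skew_hecke_R_module_isomorphisms} to identify $\HGHA$ with $(A \otimes R[G/H])^{H}$ and reduce bijectivity of $\Theta$ to the claim that $-\otimes_{R} S$ commutes with $H$-invariants. This holds because choosing an $R$-basis $\{s_{i}\}$ of $S$ decomposes $S \otimes_{R} V = \bigoplus_{i} s_{i} \otimes V$ as $R[H]$-modules for any $R[H]$-module $V$ (with $H$ acting trivially on $S$), so
$$(S \otimes_{R} V)^{H} = \bigoplus_{i} s_{i} \otimes V^{H} = S \otimes_{R} V^{H}.$$

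For the case where $|H|$ is a unit in $A$, I would reduce to Proposition \ref{mthm: skew_hecke_corner}. The element $|H| \cdot 1_{A}$ being invertible in $A$ implies $|H| \cdot 1_{S \otimes A}$ is invertible in $S \otimes_{R} A$, with inverse $1_{S} \otimes |H|^{-1} 1_{A}$, so Proposition \ref{mthm: skew_hecke_corner} expresses both skew Hecke algebras as corner rings in the respective skew group algebras. There is a canonical $S$-algebra isomorphism $(S \otimes_{R} A) \rtimes G \cong S \otimes_{R} (A \rtimes G)$, immediate from the tensor-product description of skew group algebras and the fact that $S$ is central, under which the idempotent $\eeH$ for the former corresponds to $1_{S} \otimes \eeH$ for the latter. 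The desired isomorphism then follows from the chain
$$\mathcal{H}_{S}(G,H,S \otimes_{R} A,\alpha_{S}) \cong (1_{S} \otimes \eeH)\bigl(S \otimes_{R} (A \rtimes G)\bigr)(1_{S} \otimes \eeH) \cong S \otimes_{R} \bigl(\eeH (A \rtimes G) \eeH\bigr) \cong S \otimes_{R} \HGHA.$$

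The only real obstacle is the bookkeeping of confirming that the chain of identifications in each case recovers the natural map $\Theta$, so that one genuinely obtains an isomorphism of $S$-algebras. The hypotheses play the same conceptual role throughout: they ensure that $-\otimes_{R} S$ commutes with the formation of $H$-invariants—via flatness in the free case, or via the corner-ring description (which expresses the invariants as a direct summand cut out by an idempotent, and is therefore preserved by any base change) in the unit case.
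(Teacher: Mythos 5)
Your proof is correct, and its two case-by-case mechanisms are the same ones the paper relies on; the difference is purely in the routing. The paper's proof is a one-line reduction to Proposition \ref{prop: products} via the observation that $S \cong \mathcal{H}_{R}(1,1,S,\mathrm{triv})$, with a remark that when $G_1 = 1$ freeness of $S$ over $R$ suffices to make invariants commute with the tensor product — exactly the basis argument you spell out. You instead construct the natural map $\Theta$ explicitly and verify bijectivity directly, which costs a little more writing but buys two things. First, it makes the isomorphism concrete rather than leaving it implicit in a chain of identifications. Second, your handling of the second hypothesis is actually cleaner than the paper's: the proposition assumes $|H|$ is a unit in $A$, whereas Proposition \ref{prop: products} is stated with $|H_1|,|H_2|$ units in $R$, so the paper's citation involves a small mismatch that your direct appeal to Proposition \ref{prop:skew_hecke_corner} (which needs only $|H|$ invertible in $A$) avoids. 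Your closing observation that the corner $\eeH(A\rtimes G)\eeH$ is an $R$-module direct summand of $A \rtimes G$, hence preserved under arbitrary base change, is the right justification for the middle isomorphism in your chain.
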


\begin{proof}
This follows from Proposition \ref{prop: products} by noting that $S \cong \mathcal{H}_{R}(1,1,S,\mathrm{triv})$.
Note that in the proof of Proposition \ref{prop: products}, if $G_1 = 1$ then it is sufficient to assume that $S$ is free over $R$ to ensure that $(S \otimes V)^{G} = S \otimes V^{G}$ for any $R[G]$-module $V$.
\end{proof}

% >>>

% subsec: families <<<
\subsection{Families}
\label{sec: families}
Let $\HGHA$ be a skew Hecke algebra over $R$.
For each prime ideal $\mathfrak{p} \subset R$, with residue field $\kappa(\mathfrak{p}) = R_{\mathfrak{p}} / \mathfrak{p}R_{\mathfrak{p}}$, there is an associated $\kappa(\mathfrak{p})$-algebra
\[
\HGHA^{\#} _{\mathfrak{p}} := \kappa(\mathfrak{p}) \otimes_{R} \HGHA ,
\]
which is equal to the fibre over $\mathfrak{p}$ of the sheaf of $\mathcal{O}_{\mathrm{Spec} \, R}$-algebras on $\mathrm{Spec} \, R$ associated to the $R$-algebra $\HGHA$.

\begin{prop}
If $|H|$ is a unit in $R$ then for each $\mathfrak{p} \in \mathrm{Spec} \, R$ there is an isomorphism of $\kappa(\mathfrak{p})$-algebras
\[
\HGHA^{\#} _{\mathfrak{p}} \cong \mathcal{H}_{\kappa(\mathfrak{p})} (G,H,\kappa(\mathfrak{p}) \otimes_{R} A,\alpha_{\kappa(\mathfrak{p})}) .
\]
\end{prop}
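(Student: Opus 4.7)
The plan is to deduce this from Proposition \ref{prop: ext_scalars} by taking $S = \kappa(\mathfrak{p})$, viewed as a commutative $R$-algebra via the composition $R \to R_{\mathfrak{p}} \twoheadrightarrow \kappa(\mathfrak{p})$. By definition $\HGHA^{\#}_{\mathfrak{p}} = \kappa(\mathfrak{p}) \otimes_{R} \HGHA$, so the desired isomorphism is precisely the conclusion of Proposition \ref{prop: ext_scalars} applied to this choice of $S$, provided one of its two alternative hypotheses is satisfied.

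The first alternative, that $\kappa(\mathfrak{p})$ be free as an $R$-module, fails in general (for instance for a prime not of the form $\mathfrak{p} = (0)$ in a domain), so the plan is to verify the second alternative: that $|H|$ is a unit in $A$. This is essentially immediate from the standing hypothesis that $|H|$ is a unit in $R$. Indeed, because $A$ is an $R$-algebra by a ring map $i_{A} : R \to Z(A)$, the image $i_{A}(|H|)$ of $|H|$ in $A$ is equal to $|H| \cdot 1_{A}$ and admits $i_{A}(|H|^{-1})$ as a two-sided inverse.

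With this verified, Proposition \ref{prop: ext_scalars} yields an isomorphism of $\kappa(\mathfrak{p})$-algebras
\[
\kappa(\mathfrak{p}) \otimes_{R} \HGHA \xrightarrow{\cong} \mathcal{H}_{\kappa(\mathfrak{p})}(G,H,\kappa(\mathfrak{p}) \otimes_{R} A,\alpha_{\kappa(\mathfrak{p})}),
\]
which is the required statement.

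There is no real obstacle here; the only subtlety worth remarking is that one cannot invoke the first alternative of Proposition \ref{prop: ext_scalars} for an arbitrary residue field, so the hypothesis that $|H|$ be a unit in $R$ (rather than merely in $A$) is what allows a uniform statement across all $\mathfrak{p} \in \mathrm{Spec}\,R$.
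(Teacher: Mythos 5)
Your proposal is correct and follows exactly the paper's route: the paper likewise deduces the statement immediately from Proposition \ref{prop: ext_scalars} applied with $S = \kappa(\mathfrak{p})$. Your explicit check that $|H|$ being a unit in $R$ forces it to be a unit in $A$ (so that the second alternative hypothesis of that proposition applies, the first being unavailable for a general residue field) is a correct and worthwhile detail that the paper leaves implicit.
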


\begin{proof}
This follows immediately from Proposition \ref{prop: ext_scalars}.
\end{proof}

\begin{ex}
Suppose that $A$ is an algebra over an algebraically closed field $k$, $G$ is a group, $H \le G$ is a subgroup, $\alpha : G \to \Aut_{k\text{-alg}} (A)$ is an action of $G$ on $A$, and $\xi \in Z(A)^{G}$ is a $G$-invariant central element.
The assignment $t \mapsto \xi$ extends uniquely to a $k[t]$-algebra structure on $A$, compatible with the action of $G$, and we have the skew Hecke algebra $\mathcal{H}_{k[t]}(G,H,A,\alpha)$ over $k[t]$.
The spectrum of $k[t]$ is
\[
\mathrm{Spec} \, k[t] = \{ (0) \} \cup \{ (t-\lambda) \;|\; \lambda \in k\}
\] 
and if $|H|$ is not divisible by the characteristic of $k$ then for any $\lambda \in k$
\[
\mathcal{H}_{k[t]}(G,H,A,\alpha) / (\xi - \lambda 1_{A}) \cong \mathcal{H}_{k}\left(G,H,A/(\xi-\lambda 1_A),\alpha_{k[t]/(t-\lambda)}\right) .
\]
\end{ex}
% >>>

% subsec: cocycle-equivalent actions <<<
\subsection{Cocycle-equivalent actions}
\label{sec: coycle_equiv}

\begin{prop}
\label{prop: cocycle_equivalence}
Suppose that $\alpha , \beta : G \to \Aut_{R\text{-alg}}(A)$ are group homomorphisms, and that there exists a map $\chi : G \to A^{\times}$ from $G$ to the group of units in $A$, such that:
\begin{enumerate}
\item \label{it: cocycle_equivalence.chi}
$\chi(gg') = \chi(g) \alpha_{g} \chi(g')$ for all $g,g' \in G$.

\item \label{it: cocycle_equivalence.inner}
$\beta_{g} a = \chi(g) (\alpha_{g} a) \chi(g)^{-1}$ for all $g \in G$ and $a \in A$.

\item \label{it: cocycle_equivalence.h}
$\chi(h) = 1_{A}$ for all $h \in H$.
\end{enumerate} 
Then there is an isomorphism of $R$-algebras
\begin{equation}
\label{eqn:isom_cocyle_equivalent}
\HGHA \cong \mathcal{H}_{R}(G,H,A,\beta) .
\end{equation}
\end{prop}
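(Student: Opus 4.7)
The plan is to construct an explicit $R$-algebra isomorphism at the level of functions. Define
\[
\Theta : \HGHA \to \mathcal{H}_{R}(G,H,A,\beta) \; , \; \Theta(\phi)(gH) = \phi(gH) \chi(g)^{-1} ,
\]
with candidate inverse $\Theta^{-1}(\psi)(gH) = \psi(gH) \chi(g)$. The preliminary point is that although $\chi$ is defined on $G$, the expressions $\chi(g)^{\pm 1}$ depend only on the coset $gH$: for $h' \in H$, conditions \eqref{it: cocycle_equivalence.chi} and \eqref{it: cocycle_equivalence.h} give $\chi(gh') = \chi(g) \alpha_{g} \chi(h') = \chi(g)$, so $\chi$ descends to a well-defined function $G/H \to A^{\times}$.

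Next I would verify that $\Theta(\phi)$ satisfies the $\beta$-equivariance condition required to lie in $\mathcal{H}_{R}(G,H,A,\beta)$. The key observation is that \eqref{it: cocycle_equivalence.inner} and \eqref{it: cocycle_equivalence.h} together force $\beta_{h} = \alpha_{h}$ for $h \in H$, while \eqref{it: cocycle_equivalence.chi} and \eqref{it: cocycle_equivalence.h} give the left-variant cocycle identity $\chi(hg) = \chi(h) \alpha_{h} \chi(g) = \alpha_{h} \chi(g)$. Combining these with the $\alpha$-equivariance of $\phi$ yields
\[
\Theta(\phi)(hgH) = \alpha_{h}\phi(gH) \cdot \alpha_{h}(\chi(g)^{-1}) = \alpha_{h} \bigl( \Theta(\phi)(gH) \bigr) = \beta_{h} \Theta(\phi)(gH).
\]
An analogous calculation shows that $\Theta^{-1}(\psi)$ lies in $\HGHA$, so $\Theta$ and $\Theta^{-1}$ are mutually inverse $R$-module isomorphisms. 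It is also immediate from $\chi(1) = 1_{A}$ that the unit $\delta_{H,1_{A}}$ is preserved.

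The main computation, and the main obstacle, is multiplicativity. Expanding $(\Theta(\phi) * \Theta(\psi))(gH)$ using the $\beta$-convolution \eqref{eqn: skew_hecke_product} and then \eqref{it: cocycle_equivalence.inner}, the factors of $\chi(k)$ produced by $\beta_{k} = \chi(k) \alpha_{k}(-) \chi(k)^{-1}$ cancel the $\chi(k)^{-1}$ coming from $\Theta(\phi)(kH)$. The remaining task is to move $\alpha_{k}(\chi(k^{-1}g)^{-1})$ across and combine it with the factor from the right-hand $\Theta$; applying the cocycle identity \eqref{it: cocycle_equivalence.chi} in the form $\chi(g) = \chi(k) \alpha_{k} \chi(k^{-1}g)$ yields the crucial identity
\[
\alpha_{k} \bigl( \chi(k^{-1}g)^{-1} \bigr) = \chi(g)^{-1} \chi(k) .
\]
After this substitution the $\chi(k)$-factors telescope and one is left with
\[
(\Theta(\phi) * \Theta(\psi))(gH) = \left( \sum_{kH} \phi(kH) \alpha_{k} \psi(k^{-1}gH) \right) \chi(g)^{-1} = \Theta(\phi * \psi)(gH) ,
\]
establishing \eqref{eqn:isom_cocyle_equivalent}. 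The delicate bookkeeping in this last step is the heart of the argument; all other verifications reduce to direct application of one of the three hypotheses.
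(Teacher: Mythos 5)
Your proof is correct, and it takes a genuinely different route from the paper. The paper proves this proposition by passing through the fixed-point description of Theorem \ref{thm: fixed_points}: it defines an automorphism $\Phi$ of $A \otimes \End_{R}(\IndHG R)$ by $a \otimes E_{gH,kH} \mapsto \chi(g)\,a\,\chi(k)^{-1} \otimes E_{gH,kH}$ (well-defined by hypothesis \eqref{it: cocycle_equivalence.h}), checks that $\Phi$ is multiplicative using $E_{gH,kH}E_{sH,tH} = \delta_{kH,sH} E_{gH,tH}$, and then uses the cocycle identity to show that $\Phi$ intertwines the $\alpha \otimes \mathrm{Ad}$-action with the $\beta \otimes \mathrm{Ad}$-action, hence restricts to an isomorphism of the invariant subalgebras. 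Your argument instead works directly with the convolution algebra of $H$-invariant functions, and your map $\Theta(\phi)(gH) = \phi(gH)\chi(g)^{-1}$ is precisely the explicit formula that the paper records only in the remark following its proof (in the $A[G/H]^{H}$ picture: $\sum_{gH} a_{gH}\, gH \mapsto \sum_{gH} (a_{gH}\chi(g)^{-1})\, gH$). I have checked your computation: the descent of $\chi$ to $G/H$, the identity $\beta_h = \alpha_h$ for $h \in H$, the equivariance verification, and the multiplicativity calculation via $\alpha_{k}(\chi(k^{-1}g)^{-1}) = \chi(g)^{-1}\chi(k)$ all go through, with the orderings correct for non-commutative $A$. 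What each approach buys: the paper's route makes the intertwining property a one-line conjugation computation and fits into its systematic use of Theorem \ref{thm: fixed_points}, but it depends on that earlier machinery; your route is self-contained, more elementary, and directly produces the explicit isomorphism at the level of functions.
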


\begin{proof}
Set $E := \End_{R}(\IndHG R)$; note that $E \cong E^{\mathrm{op}}$.
Write $\mathrm{Ad}_{g}$ for the action of $G$ on $E$.
We use the isomorphism $\HGHA \cong (A \otimes E)^{G}$ of Theorem \ref{thm: fixed_points}, and the notation $E_{gH,kH}$ used in the proof of that result.
There is an $R$-algebra automorphism $\Phi$ of $A \otimes E$ determined by
\[
a \otimes E_{gH,kH} \mapsto \chi(g) a \chi(k)^{-1} \otimes E_{gH,kH} .
\]
The map is well defined by assumption \eqref{it: cocycle_equivalence.h}.
The fact that $\Phi$ is multiplicative follows from the fact that $E_{gH,kH} E_{sH,tH} = 0$ unless $kH = sH$, in which case for any $a,b \in A$
\begin{align*}
& \left( \chi(g) a \chi(k)^{-1} \otimes E_{gH,kH} \right) \left( \chi(s) b \chi(t)^{-1} \otimes E_{sH,tH} \right) \\
& = \left( \chi(g) a \chi(k)^{-1} \chi(k) b \chi(t)^{-1} \otimes E_{gH,tH} \right) \\
& = \chi(g) a b \chi(t)^{-1} \otimes E_{gH,tH} .
\end{align*}
It follows from the cocycle condition of assumption \eqref{it: cocycle_equivalence.chi} that
\begin{align*}
\chi(sg) (\alpha_{s}a) \chi(sk)^{-1} & = \chi(s)\alpha_{s}(\chi(g)) \chi(s)^{-1} (\beta_{s}a) \chi(s) \alpha_{s}(\chi(k))^{-1} \chi(s)^{-1} \\
& = \left( \beta_{s} \chi(g) \right) (\beta_{s}a) \left( \beta_{s}\chi(k)^{-1} \right) ,
\end{align*}
which gives the third equality in 
\begin{align*}
\left( \Phi \circ ( \alpha_{s} a\otimes \mathrm{Ad}_{s} ) \right) (a \otimes E_{gH,kH}) & = \Phi (\alpha_{s} a \otimes E_{sgH,skH}) \\
& = \chi(sg) (\alpha_{s}a) \chi(sk)^{-1} \otimes E_{sgH,skH} \\
& = \left( \beta_{s} \chi(g) \right) (\beta_{s}a) \left( \beta_{s}\chi(k)^{-1} \right) \otimes E_{sgH,skH} \\
& = (\beta_{s} \otimes \mathrm{Ad}_{s}) ( \chi(g) a \chi(k)^{-1} \otimes E_{gH,kH} ) \\
& = \left( (\beta_{s} \otimes \mathrm{Ad}_{s}) \circ \Phi \right) ( a \otimes E_{gH,kH} ) .
\end{align*}
This shows that $\Phi$ maps the $\alpha \otimes \mathrm{Ad}$-invariants isomorphically onto the $\beta \otimes \mathrm{Ad}$-invariants, and the result follow from Theorem \ref{thm: fixed_points}.
\end{proof}

\begin{remi}
In terms of $A[G/H]^{H}$, an isomorphism \eqref{eqn:isom_cocyle_equivalent} is given by
\[
\sum_{gH} a_{gH} gH \mapsto \sum_{gH} (a_{gH} \chi(g)^{-1}) gH .
\]
\end{remi}

\begin{cor}
\label{cor: inner_action}
If there exists a homomorphism $\chi : G \to A^{\times}$ such $\alpha_{g}(a) = \chi(g)a\chi(g)^{-1}$ and $\chi(h) = 1_{A}$ for all $g \in G$, $a \in A$ and $h \in H$; and $A$ is free as an $R$-module, then there is an isomorphism of $R$-algebras
\[
\HGHA \cong A \otimes \HGH .
\]
\end{cor}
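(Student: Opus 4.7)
The strategy is to reduce the statement to Proposition \ref{prop: cocycle_equivalence} combined with the trivial-action case of Proposition \ref{prop: special_cases}. With $\beta = \mathrm{triv}$ the trivial $G$-action on $A$, Proposition \ref{prop: special_cases} gives $\mathcal{H}_{R}(G,H,A,\mathrm{triv}) \cong A \otimes \HGH$ under the freeness hypothesis, so it suffices to construct an $R$-algebra isomorphism $\HGHA \cong \mathcal{H}_{R}(G,H,A,\mathrm{triv})$.

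To produce this isomorphism via Proposition \ref{prop: cocycle_equivalence}, I would set $\chi'(g) := \chi(g)^{-1}$ and verify the three hypotheses of that proposition for the data $(\alpha, \beta = \mathrm{triv}, \chi')$. Condition (c), $\chi'(h) = 1_{A}$ for $h \in H$, is immediate from $\chi(h) = 1_{A}$. Condition (b), $\beta_{g}a = \chi'(g)(\alpha_{g}a)\chi'(g)^{-1}$, reduces after substituting the inner formula $\alpha_{g}(a) = \chi(g)a\chi(g)^{-1}$ to the tautology $a = \chi(g)^{-1}\chi(g)a\chi(g)^{-1}\chi(g)$. For the cocycle identity (a), using $\alpha_{g}(\chi(g')^{-1}) = \chi(g)\chi(g')^{-1}\chi(g)^{-1}$ and the fact that $\chi$ is a group homomorphism, one computes
\[
\chi'(g)\,\alpha_{g}(\chi'(g')) = \chi(g)^{-1}\chi(g)\chi(g')^{-1}\chi(g)^{-1} = \chi(g')^{-1}\chi(g)^{-1} = \chi(gg')^{-1} = \chi'(gg') ,
\]
as required.

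The only non-obvious point is that one must use the inverse $\chi^{-1}$ rather than $\chi$ itself as the intertwining cocycle: the conjugation in hypothesis (b) of Proposition \ref{prop: cocycle_equivalence} takes $\alpha$ to $\beta$, so in order to pass \emph{from} the inner action $\alpha$ \emph{to} the trivial action one must invert. Once this is noticed, the verifications above are purely formal and I anticipate no substantive obstacle; the proof reduces to invoking Propositions \ref{prop: cocycle_equivalence} and \ref{prop: special_cases} in sequence.
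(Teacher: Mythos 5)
Your proof is correct and follows essentially the same route as the paper: both reduce the statement to Proposition \ref{prop: cocycle_equivalence} combined with Proposition \ref{prop: special_cases}\eqref{it: spec_case.a_triv}. The only difference is that the paper applies Proposition \ref{prop: cocycle_equivalence} with the roles of the two actions reversed (taking the proposition's $\alpha$ to be $\mathrm{triv}$ and its $\beta$ to be the inner action), so that $\chi$ itself serves as the cocycle and the inversion $\chi \mapsto \chi^{-1}$ you carry out is not needed.
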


\begin{proof}
This follows from Proposition \ref{prop: special_cases}\eqref{it: spec_case.a_triv} and Proposition \ref{prop: cocycle_equivalence} applied to the case where $\alpha$ is the trivial homomorphism.
\end{proof}

\begin{remi}
\label{rem: outer_equiv_idempotent}
Proposition \ref{prop: cocycle_equivalence} and Corollary \ref{cor: inner_action} are in general false if the assumption that $\chi(h) = 1_{A}$ for all $h \in H$ is dropped.
By Proposition \ref{prop: special_cases} \eqref{it: spec_case.H_eq_G}, $\mathcal{H}_{R}(G,G,A,\alpha) \cong A^{G}$ whether or not the action $\alpha$ is inner, whereas $A \otimes \mathcal{H}_{R}(G,G) \cong A$.
\end{remi}
% >>>

% subsec: gradings and filtrations <<<
\subsection{Gradings and filtrations}
\label{sec: grad_filt}
Set $E := \End_{R}(\IndHG R)$.
If $A$ is equipped with an $R$-algebra grading $A = \bigoplus_{i \in \mathbb{Z}} A_{i}$ such that each $R$-submodule $A_{i} \subseteq A$ is an $R[G]$-submodule, then $(A \otimes E)^{G} \cong \HGHA$ is a graded $R$-algebra with $((A \otimes E)^{G} )_{i} = (A_{i} \otimes E)^{G}$.
Similarly, if $A$ is equipped with a $G$-invariant $R$-algebra filtration $\{A_{\le i}\}_{i \in \mathbb{Z}}$ then $\HGHA$ is a graded $R$-algebra with $((A \otimes E)^{G})_{\le i} = (A_{\le i} \otimes E)^{G}$.
In the latter situation, there is a natural action $\alpha^{\mathrm{Gr}}$ of $G$ on the associated graded algebra $\Gr A := \bigoplus_{i \in \mathbb{Z}} A_{\le i}/A_{\le i-1}$ and we can consider the corresponding skew Hecke algebra.

\begin{prop}
\label{prop: skew_hecke_grading}
Suppose that $A$ is equipped with a $G$-invariant $R$-algebra filtration.
There is a homomorphism of graded $R$-algebras
\[
\Gr \HGHA \to \mathcal{H}_{R}(G,H,\Gr A,\alpha^{\mathrm{Gr}})
\]
that is an isomorphism if $|G|$ is a unit in $R$.
\end{prop}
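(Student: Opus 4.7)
The plan is to carry out the entire argument on the fixed-point side, using the identification $\HGHA \cong (A \otimes E)^{G}$ coming from Theorem~\ref{thm: fixed_points}, where $E := \End_{R}(\IndHG R)$. Under this isomorphism the filtration on $\HGHA$ is, by construction, $(A_{\le i} \otimes E)^{G}$, and since $E$ is a finite free $R$-module of rank $|G/H|^{2}$ carrying no filtration, the question reduces to understanding how $G$-invariants interact with the filtration coming from $A$.

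To build the graded map, I would first use the $G$-equivariant quotient maps $A_{\le i} \twoheadrightarrow A_{\le i}/A_{\le i-1} = (\Gr A)_{i}$ to obtain, after tensoring with $E$ and taking $G$-invariants, morphisms $(A_{\le i} \otimes E)^{G} \to ((\Gr A)_{i} \otimes E)^{G}$ that vanish on $(A_{\le i-1} \otimes E)^{G}$. These assemble into a graded $R$-module homomorphism
\[
\Gr \HGHA \to \bigoplus_{i \in \mathbb{Z}} ((\Gr A)_{i} \otimes E)^{G} = \mathcal{H}_{R}(G,H,\Gr A,\alpha^{\mathrm{Gr}}).
\]
Multiplicativity is then a direct check using the product formula \eqref{eqn: skew_hecke_product_tensor}: since each $\alpha_{g}$ preserves the filtration and $E$ is ungraded, the product of representatives of filtration degrees $i$ and $j$ lies in $(A_{\le i+j} \otimes E)^{G}$, and its class in degree $i+j$ depends only on the classes of the factors, matching exactly the induced product on $\Gr A$. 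Unitality is immediate because the unit of $\HGHA$ lies in $(A_{\le 0} \otimes E)^{G}$.

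For the isomorphism claim when $|G|$ is a unit in $R$, the key input is the Reynolds operator $\frac{1}{|G|}\sum_{g \in G} g$, which splits every surjection of $R[G]$-modules and so makes the functor $(-)^{G}$ exact on the category of $R[G]$-modules. Applied to the short exact sequence
\[
0 \to A_{\le i-1} \otimes E \to A_{\le i} \otimes E \to (\Gr A)_{i} \otimes E \to 0,
\]
which remains exact because $E$ is free over $R$, this produces a short exact sequence on $G$-invariants, and hence a canonical identification
\[
(A_{\le i} \otimes E)^{G}/(A_{\le i-1} \otimes E)^{G} \cong ((\Gr A)_{i} \otimes E)^{G}
\]
in each degree, showing the comparison map is a bijection degree by degree. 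The main obstacle, to the extent that there is one, is the multiplicativity bookkeeping in the second paragraph; the rest of the argument reduces to the single observation that $G$-invariants are exact as soon as $|G|$ is invertible in $R$.
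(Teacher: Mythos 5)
Your argument is correct and follows essentially the same route as the paper: define the comparison map degreewise from $(A_{\le i}\otimes E)^{G}/(A_{\le i-1}\otimes E)^{G}$ to $((\Gr A)_{i}\otimes E)^{G}$, and use exactness of $(-\otimes E)^{G}$ (via the Reynolds operator and freeness of $E$) when $|G|$ is a unit to conclude it is an isomorphism. The only slip is cosmetic: on the fixed-point side the relevant product is the tensor-algebra product on $(A\otimes E)^{G}$ rather than the formula \eqref{eqn: skew_hecke_product_tensor}, which lives on $(A\otimes R[G/H])^{H}$, but the multiplicativity check is the same either way.
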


\begin{proof}
The required $R$-algebra homomorphism is given by the direct sum of the natural maps
\[
(A_{\le i} \otimes E)^{G} / (A_{\le i-1} \otimes E)^{G} \to (A_{\le i}/A_{\le i-1} \otimes E)^{G} = ((\Gr A \otimes E)^{G})_{i} .
\]
If $|G|$ is a unit in $R$ then the functor $(- \otimes E)^{G}$ on $R$-modules is exact because $E$ is free and $-^{G}$ is exact, and the result follows from the short exact sequence
\[
0 \to A_{\le i-1} \to A_{\le i} \to A_{\le i} / A_{\le i-1} \to 0 .
\]
\end{proof}

\begin{remi}
With more work one can show that it is sufficient to assume that $|H|$ is a unit in $R$ by considering $A[G/H]^{H}$ instead of $(A \otimes E)^{G}$.
\end{remi}
% >>>

% subsec: opposite algebras and involutions <<<
\subsection{Opposite algebras and involutions}
\label{sec: opposite_involutions}

\begin{prop}
\label{prop: op_alg_isom}
There is an $R$-algebra isomorphism
\[
\HGHA^{\mathrm{op}} \cong \mathcal{H}_{R}(G,H,A^{\mathrm{op}},\alpha^{\mathrm{op}})
\]
where $\alpha_{g}^{\mathrm{op}}a := \alpha_{g}a$ for $a \in A^{\mathrm{op}}$.
\end{prop}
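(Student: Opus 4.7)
The plan is to construct an explicit $R$-algebra isomorphism $T : \HGHA^{\mathrm{op}} \to \mathcal{H}_{R}(G,H,A^{\mathrm{op}},\alpha^{\mathrm{op}})$. Viewing elements of $\HGHA$ as right $H$-invariant functions $\phi : G \to A$ satisfying $\phi(hg) = \alpha_h \phi(g)$ for $h \in H$, I define
\[
(T\phi)(g) := \alpha_g \phi(g^{-1}) .
\]
This generalises the standard involution $\phi \mapsto (g \mapsto \phi(g^{-1}))$ on Hecke algebras from \cite[Chap.\ I, Ex.\ 7.3]{KriegBook1990}, which corresponds to the case $A = R$, $\alpha = \mathrm{triv}$. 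The first step is to verify that $T\phi$ lies in the target algebra: the identity $(gh)^{-1} = h^{-1}g^{-1}$ together with the left transformation rule for $\phi$ yields right $H$-invariance of $T\phi$, and symmetrically the right $H$-invariance of $\phi$ yields the required left transformation rule $T\phi(hg) = \alpha_h T\phi(g)$, using that $\alpha^{\mathrm{op}} = \alpha$ as a map $G \to \Aut_{R\text{-alg}}(A) = \Aut_{R\text{-alg}}(A^{\mathrm{op}})$.

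The main computation is to show that $T$ is an anti-homomorphism with respect to convolution, i.e.\ $T(\phi * \psi) = T(\psi) \star T(\phi)$, where $\star$ denotes the product in the target. Applying $T$ to the convolution formula \eqref{eqn: skew_hecke_product} and using that each $\alpha_g$ is a ring homomorphism yields
\[
(T(\phi * \psi))(g) = \sum_{kH} \alpha_g \phi(k) \cdot \alpha_{gk} \psi(k^{-1} g^{-1}) ,
\]
while expanding $T(\psi) \star T(\phi)$, and remembering that multiplication in $A^{\mathrm{op}}$ reverses the order of factors when rewritten in $A$, yields
\[
(T(\psi) \star T(\phi))(g) = \sum_{kH} \alpha_g \phi(g^{-1} k) \cdot \alpha_k \psi(k^{-1}) .
\]
These two sums coincide after the substitution $l = gk$, which is legitimate because left multiplication by $g$ permutes any set of representatives of $G/H$. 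It is immediate that $T$ is $R$-linear and sends the identity $\delta_{H,1_A}$ of $\HGHA^{\mathrm{op}}$ to the identity of the target, so $T$ is a unital $R$-algebra homomorphism.

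Bijectivity is free: applying the same defining formula with $A$ replaced by $A^{\mathrm{op}}$ and $\alpha$ by $\alpha^{\mathrm{op}}$, one obtains
\[
(T^2 \phi)(g) = \alpha_g (T\phi)(g^{-1}) = \alpha_g \alpha_{g^{-1}} \phi(g) = \phi(g)
\]
after the canonical identifications $(A^{\mathrm{op}})^{\mathrm{op}} = A$ and $(\alpha^{\mathrm{op}})^{\mathrm{op}} = \alpha$, so $T$ is an involution and in particular a bijection. The only genuine obstacle will be bookkeeping which algebra ($A$ or $A^{\mathrm{op}}$) each product takes place in; the computational heart is simply the change of summation variable $k \mapsto g^{-1}k$. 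A more conceptual alternative would be to apply Theorem \ref{thm: fixed_points} twice and reduce the statement to the existence of a $G$-equivariant algebra isomorphism $\End_R(R[G/H]) \cong \End_R(R[G/H])^{\mathrm{op}}$, which is furnished by transposition with respect to the coset basis, since transposition is $G$-equivariant for permutation representations.
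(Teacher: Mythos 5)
Your argument is correct, but it takes a different route from the paper. The paper's proof is a two-line conceptual argument via Theorem \ref{thm: fixed_points}: writing $\HGHA \cong (A \otimes E)^{G}$ with $E = \End_{R}(\IndHG R)$, it uses that transposition in the coset basis is a $G$-equivariant isomorphism $E \cong E^{\mathrm{op}}$, so $(A\otimes E)^{\mathrm{op}} \cong A^{\mathrm{op}}\otimes E^{\mathrm{op}} \cong A^{\mathrm{op}}\otimes E$, and then passes to $G$-invariants --- this is precisely the "more conceptual alternative" you sketch in your final sentence. What you carry out in full instead is the direct convolution-level computation with $T\phi(g) = \alpha_{g}\phi(g^{-1})$, and your formula is exactly the explicit isomorphism $\sum_{gH} a_{gH}\,gH \mapsto \sum_{gH} (\alpha_{g} a_{g^{-1}H}^{\mathrm{op}})\,gH$ that the paper records only in the remark following the proposition, without verification. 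Your computation checks out: the two displayed sums do agree under $k \mapsto gk$, the summands are independent of the choice of coset representatives (by the right $H$-invariance of $\phi$ and the left equivariance of $\psi$), and $T^{2} = \mathrm{id}$ gives bijectivity. The trade-off is the usual one: the paper's proof is shorter and reuses existing machinery, while yours is self-contained, produces the explicit involution generalising \cite[Chap.\ I, Ex.\ 7.3]{KriegBook1990}, and in effect supplies the proof of the paper's unproved remark. One presentational nit: "right $H$-invariant functions satisfying $\phi(hg)=\alpha_{h}\phi(g)$" compresses two separate conditions (constancy on left cosets $gH$, i.e.\ $\phi(gh)=\phi(g)$, and left $H$-equivariance); both are needed and both are used in your verification, so it would be worth stating them separately.
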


\begin{proof}
Set $E := \End_{R} (\IndHG R)^{\mathrm{op}}$.
Then as $R$-algebras and $R[G]$-modules $E^{\mathrm{op}} \cong E$ via the transpose map, and so
\[
(A \otimes E)^{\mathrm{op}} \cong A^{\mathrm{op}} \otimes E^{\mathrm{op}} \cong A^{\mathrm{op}} \otimes E .
\]
The statement then follows by taking $G$-invariants and using the isomorphism $\HGHA \cong (A \otimes E)^{G}$ of Theorem \ref{thm: fixed_points}.
\end{proof}

\begin{remi}
An explicit $R$-algebra isomorphism $(A[G/H]^{H})^{\mathrm{op}} \cong A^{\mathrm{op}}[G/H]^{H}$ is given by
\[
\sum_{gH} a_{gH} gH \mapsto \sum_{gH} (\alpha_{g} a_{g^{-1}H}^{\mathrm{op}}) gH
\]
where $a^{\mathrm{op}}$ denotes an element $a \in A$ considered as an element of $A^{\mathrm{op}}$.
\end{remi}

\begin{cor}
\label{cor: isom_to_op}
The following statements hold.
\begin{enumerate}
\item If $A \cong A^{\mathrm{op}}$ as $R$-algebras equipped with an action of $G$ then
\[
\HGHA \cong \HGHA^{\mathrm{op}} .
\]

\item If $A$ is commutative then $\HGHA$ is isomorphic to its opposite algebra $\HGHA^{\mathrm{op}}$.

\item If $A$ is free as $R$-module, or if $|H|$ is a unit in $R$, then the enveloping algebra
\[
\HGHA \otimes \HGHA^{\mathrm{op}}
\]
is isomorphic to the skew Hecke algebra
\[
\mathcal{H}_{R} ( G \times G , H \times H , A \otimes A^{\mathrm{op}} , \alpha \otimes \alpha^{\mathrm{op}} ) .
\]
\end{enumerate}
\end{cor}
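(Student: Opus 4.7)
The plan is to deduce all three statements from Proposition \ref{prop: op_alg_isom}, combined with the evident functoriality of the skew Hecke algebra construction in the pair $(A,\alpha)$ and, for part (c), with Proposition \ref{prop: products} on product groups. For part (a), I would first record a general functoriality statement: any $R$-algebra isomorphism $f : A \to A'$ intertwining given $G$-actions induces an $R$-algebra isomorphism $\mathcal{H}_R(G,H,A,\alpha) \to \mathcal{H}_R(G,H,A',\alpha')$ by the rule $\phi \mapsto f \circ \phi$. That this map respects the convolution product \eqref{eqn: skew_hecke_product} is immediate from $f$ being multiplicative and $G$-equivariant. Applying this functoriality to any $G$-equivariant $R$-algebra isomorphism $A \cong A^{\mathrm{op}}$ and composing with the isomorphism of Proposition \ref{prop: op_alg_isom} produces $\HGHA \cong \mathcal{H}_R(G,H,A^{\mathrm{op}},\alpha^{\mathrm{op}}) \cong \HGHA^{\mathrm{op}}$. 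Part (b) is then immediate from (a): if $A$ is commutative then the identity map is an $R$-algebra isomorphism $A \to A^{\mathrm{op}}$, and since $\alpha_g^{\mathrm{op}} = \alpha_g$ by definition this is automatically $G$-equivariant.

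For part (c), by Proposition \ref{prop: op_alg_isom} the enveloping algebra is isomorphic to $\mathcal{H}_R(G,H,A,\alpha) \otimes \mathcal{H}_R(G,H,A^{\mathrm{op}},\alpha^{\mathrm{op}})$. I would then apply Proposition \ref{prop: products} with $(G_1,H_1,A_1,\alpha_1) = (G,H,A,\alpha)$ and $(G_2,H_2,A_2,\alpha_2) = (G,H,A^{\mathrm{op}},\alpha^{\mathrm{op}})$ to identify this tensor product with $\mathcal{H}_R(G \times G, H \times H, A \otimes A^{\mathrm{op}}, \alpha \otimes \alpha^{\mathrm{op}})$. The hypotheses of Proposition \ref{prop: products} match those stated here, since $A$ and $A^{\mathrm{op}}$ share an underlying $R$-module, so that freeness of one is equivalent to freeness of the other, and $|H_1| = |H_2| = |H|$.

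No serious obstacle is expected: the argument simply chains together results already established in the paper. The only point requiring any care is the essentially tautological verification that the $G$-action on $A^{\mathrm{op}}$ appearing in Proposition \ref{prop: op_alg_isom} coincides with the $\alpha^{\mathrm{op}}$ appearing in the present statement; both are defined by $\alpha_g^{\mathrm{op}}(a) = \alpha_g(a)$, so the identification is immediate.
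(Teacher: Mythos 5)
Your proposal is correct and follows essentially the same route as the paper, which simply cites Proposition \ref{prop: op_alg_isom} and Proposition \ref{prop: products}; your added details (the functoriality of $\mathcal{H}_R(G,H,-,-)$ in equivariant algebra isomorphisms for part (a), and the check that the hypotheses of Proposition \ref{prop: products} transfer from $A$ to $A^{\mathrm{op}}$ in part (c)) are exactly the "immediate" verifications the paper leaves implicit.
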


\begin{proof}
These statements follow immediately from Proposition \ref{prop: op_alg_isom} and Proposition \ref{prop: products}.
\end{proof}

\begin{remi}
A related result is proven in \cite[Prop.\ 3.14]{PalmaBook2018}, where it is shown that a $G$-invariant involution on $A$ determines an involution on $\HGHA$.
\end{remi}
% >>>

% >>>

% sec: example S2 in S3 <<<
\section{Example: $S_2 \le S_3$}
\label{sec: example_S3}
In this section we choose a particular example of a skew Hecke algebra and calculate the product, the double coset decomposition given by Theorem \ref{thm: double_coset}, and the isomorphism onto a fixed point algebra given by Theorem \ref{thm: fixed_points}.
\\
\\
Given data $(G,H,A,\alpha)$, Proposition \ref{prop: special_cases} shows that if $H$ is normal, or $G$ acts trivially on $A$, then the skew Hecke algebra $\HGHA$ is either an invariant ring, a skew group algebra, a (non-skew) Hecke algebra, or a tensor product of such algebras.
In this section we study $\HGHA$ for the case that $H$ is the smallest example of a non-normal subgroup, which is $S_2 \le S_3$.
\\
\\
For $G = S_3$, the three subgroups of order 2, say $S_2,S_2',S_2''$, are conjugate, and the alternating subgroup $A_3 \le S_3$ is normal.
It then follows from Proposition \ref{prop: skew_hecke_fun_conjugation} that there are isomorphisms
\[
\mathcal{H}_{R}(S_3,S_2,A,\alpha) \cong \mathcal{H}_{R}(S_3,S_2',A,\alpha) \cong \mathcal{H}_{R}(S_3,S_2'',A,\alpha) ,
\]
and from Proposition \ref{prop: special_cases} that there are isomorphisms
\begin{align*}
\mathcal{H}_{R}(S_3,A_3,A,\alpha) & \cong A^{A_3} \rtimes S_3/A_3 \\
\mathcal{H}_{R}(S_3,S_3,A,\alpha) & \cong A^{S_3} \\
\mathcal{H}_{R}(S_3,1,A,\alpha) & \cong A .
\end{align*}
We therefore focus on the case that $H \le S_3$ is a particular subgroup of order 2.
In \S \ref{sec: example_S3_general} $A$ is any $R$-algebra equipped with an action $\alpha$ of $S_3$, in \S \ref{sec: polynomial_rings} we specialise to the case that $A = R[x_1,x_2,x_3]$, in \S \ref{sec: example_functions_on_S3} $A$ is the commutative algebra $RS_3$ of functions on $S_3$, and in \S \ref{sec: example_semidirect} $A$ is the group algebra $R[K]$ for $K$ a finite group.

% subsec: example with general A <<<
\subsection{General $A$}
\label{sec: example_S3_general}
Throughout \S \ref{sec: example_S3_general}, $A$ is an arbitrary $R$-algebra equipped with an action $\alpha$ of $S_3$ by $R$-algebra automorphisms.

\subsubsection{The group $S_3$}
\label{sec: example_S3_setup}
We take $S_3$ to be the group of permutations of the set $\{1,2,3\}$, and $S_2$ to be stabiliser of the element $3$, so that
\begin{align*}
S_3 & = \{ \mathrm{id} , (12) , (23) , (13) , (123) , (132) \} \\
S_2 & = \{ \mathrm{id} , (12) \} 
\end{align*}
where $\mathrm{id}$ is the identity map.
The left cosets of $S_2$ are
\[
S_2 = \{ \mathrm{id} , (12) \} \; , \; (23) S_2 = \{ (23) , (132) \} \; , \; (13) S_2 = \{ (13) , (123) \} .
\]
We fix the representatives $g_1 = \mathrm{id}$, $g_2 = (23)$, $g_3 = (13)$ of these cosets, and label the cosets 
\[
C_1 := S_2 \; , \; C_2 := (23)S_2 \; , \; C_3 := (13)S_2 .
\]
We fix the representatives $\mathrm{id}$ and $(23)$ of the double cosets of $S_2$, which are
\[
S_2 \; , \; S_2 (23) S_2  = \{ (13) , (23) , (123) , (132) \} .
\]

\subsubsection{The $R$-module structure}
\label{sec: example_S3_Rmodule}
As an $R$-module the skew Hecke algebra $\HGHASym$ is equal to
\begin{align*}
\Maps ( S_3 / S_2 , A )^{S_2} & = \{ \phi : S_3 / S_2 \mid \phi( (12) gS_2 ) = \alpha_{(12)} \phi(gS_2) \text{ for all } g \in S_3 \} \\
& \cong \left( A \otimes R[S_3/S_2] \right)^{S_2} .
\end{align*}
It follows from the identity $(12)(23)S_2 = (13)S_2$ that a map $\phi : S_3/S_2 \to A$ is in $\Maps(S_3/S_2,A)^{S_2}$ if and only if
\[
\phi( S_2 ) \in A^{S_2} \; , \; \phi( (13)S_2 ) = \alpha_{(12)} \phi( (23) S_2 ) .
\]
With respect to the double coset representatives $\mathrm{id}$ and $(23)$, the $A^{H}$-bimodule isomorphism of Theorem \ref{thm: double_coset}
\[
\HGHASym \xrightarrow{\cong} \sum_{S_2 g S_2} A^{S_2 \cap gS_2g^{-1}}
\]
is
\begin{equation}
\label{eqn: example_S3_R_module_isom}
\HGHASym \xrightarrow{\cong} A \oplus A^{S_2} \; , \; \phi \mapsto \left( \phi(S_2) , \phi( (23)S_2) \right) .
\end{equation}
The inverse of this isomorphism is the map
\[
(a,b) \mapsto \left( S_2 \mapsto a \; , \; (12)S_2 \mapsto b \; , \; (13)S_2 \mapsto \alpha_{(12)} b \right) .
\]
When comparing with Theorem \ref{thm: double_coset} note that $S_2 \cap (23) S_2 (23)^{-1} = \{ \mathrm{id} \}$.
In terms of tensors, a general element of $\HGHASym \cong (A \otimes R[S_3/S_2])^{S_2}$ is of the form
\[
a \otimes S_2 + b \otimes (23)S_2 + \alpha_{(12)} b \otimes (23)S_2
\]
or
\[
a \otimes C_1 + b \otimes C_2 + \alpha_{(12)} b \otimes C_3 .
\]

\subsubsection{The product}
\begin{prop}
\label{prop: example_S3_skew_hecke_product}
Under the $R$-module isomorphism \eqref{eqn: example_S3_R_module_isom}, the product on the $R$-algebra $\HGHASym$ corresponds to the product on $A^{S_2} \oplus A$ given by the formula
\begin{equation}
\label{eqn: example_S3_skew_hecke_product}
\begin{pmatrix}
a \\
b
\end{pmatrix}
\begin{pmatrix}
a' \\
b'
\end{pmatrix}
 = 
\begin{pmatrix}
aa' + \left( \mathrm{id} + \alpha_{(12)} \right) ( b \alpha_{(23)} b' ) \\
ab' + b \alpha_{(23)} a' + (\alpha_{(12)} b) (\alpha_{(13)} b')
\end{pmatrix}
\end{equation}
where $\mathrm{id} + \alpha_{(12)}$ is the sum in $\End_{R} A$ of the identity endomorphism and the automorphism $\alpha_{(12)}$.
Equivalently, the direct summand $A^{S_2}$ is a subalgebra with product restricted from $A$; the direct summand $A$ is an $A^{S_2}$-bimodule with actions
\begin{equation}
\label{eqn: example_S3_bimodule_A}
a \cdot b \cdot a' = a b (\alpha_{(23)} a')
\end{equation}
for $a,a' \in A^{S_2}$ and $b \in A$; and the product of two elements $b,b'$ in the direct summand $A$ is given by
\begin{equation}
\label{eqn: example_S3_product_bb'}
\begin{pmatrix}
\left( \mathrm{id} + \alpha_{(12)} \right) ( b \alpha_{(23)} b' ) \\
(\alpha_{(12)} b) (\alpha_{(13)} b') .
\end{pmatrix}
\end{equation}
\end{prop}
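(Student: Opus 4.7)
The plan is to apply the convolution formula \eqref{eqn: skew_hecke_product} (or equivalently the free-module product \eqref{eqn: skew_hecke_product_free}) directly to two general elements of $\HGHASym$ written in the coordinates introduced in \S\ref{sec: example_S3_Rmodule}. Given pairs $(a,b)$, $(a',b') \in A^{S_2} \oplus A$, set $\phi(C_1) = a$, $\phi(C_2) = b$, $\phi(C_3) = \alpha_{(12)} b$, and similarly for $\psi$. Since the image of \eqref{eqn: example_S3_R_module_isom} recovers an $H$-invariant function from its values on $C_1$ and $C_2$, it suffices to compute $(\phi * \psi)(C_1)$ and $(\phi * \psi)(C_2)$ from the convolution formula, each of which is a sum of three terms indexed by the representatives $\mathrm{id}, (23), (13)$.

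The bookkeeping step is to identify the cosets $k^{-1} g S_2$ that appear as $k$ ranges over $\{\mathrm{id},(23),(13)\}$ and $g \in \{\mathrm{id},(23)\}$. The only cases that leave $C_1$ and $C_2$ are $(23)(13)S_2 = (123)S_2 = C_3$ and $(13)(23)S_2 = (132)S_2 = C_2$; on these, $H$-invariance converts $\psi(C_3)$ into $\alpha_{(12)}b'$ and keeps $\psi(C_2) = b'$. Reading off the contributions to $(\phi * \psi)(C_2)$ gives the second component of \eqref{eqn: example_S3_skew_hecke_product} on the nose. Reading off $(\phi * \psi)(C_1)$ produces $aa' + b\alpha_{(23)}b' + (\alpha_{(12)}b)\alpha_{(13)}(\alpha_{(12)}b')$, and the one non-mechanical step is to observe that $(13)(12) = (123) = (12)(23)$ so that
\[
(\alpha_{(12)}b)\alpha_{(123)}b' \;=\; \alpha_{(12)}\bigl(b\,\alpha_{(23)}b'\bigr),
\]
which is exactly what packages the last two terms as $(\mathrm{id} + \alpha_{(12)})(b\alpha_{(23)}b')$.

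The main potential obstacle is purely notational: one must keep track of which cycle is which in $S_3$ (the products $(12)(23), (23)(12), (13)(12), (13)(23)$ alternate between $(123)$ and $(132)$), and be attentive to when $H$-invariance of $a, a' \in A^{S_2}$ is used --- for instance, to reconcile $(\phi * \psi)(C_3)$ computed directly with $\alpha_{(12)}(\phi * \psi)(C_2)$, one needs $\alpha_{(12)}a' = a'$ so that $\alpha_{(123)}a' = \alpha_{(13)}a'$. This consistency check is a useful sanity test but is not needed for the stated formula itself.

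Finally, the reformulation in terms of bimodule structure follows by specialising the formula \eqref{eqn: example_S3_skew_hecke_product}. Setting $b = b' = 0$ recovers the subalgebra structure on $A^{S_2}$; setting $b' = 0$ (resp.\ $a = 0$) gives the left (resp.\ right) $A^{S_2}$-action on $A$ as in \eqref{eqn: example_S3_bimodule_A}, matching \eqref{eqn: AH_bimodule_actions} for $g = (23)$; and setting $a = a' = 0$ gives \eqref{eqn: example_S3_product_bb'}. This recovers, as an instance, the double coset decomposition of Theorem \ref{thm: double_coset}, so the bimodule description is consistent with the general theory.
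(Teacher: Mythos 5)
Your proposal is correct and follows essentially the same route as the paper: a direct evaluation of the convolution product \eqref{eqn: skew_hecke_product} at the coset representatives $\mathrm{id}$ and $(23)$, with the same key identity $(13)(12)=(12)(23)$ used to package the third term of $(\phi*\psi)(C_1)$ as $\alpha_{(12)}(b\,\alpha_{(23)}b')$. The only (immaterial) organisational difference is that the paper imports the subalgebra and $A^{S_2}$-bimodule structure from Theorem \ref{thm: double_coset} and only computes the $b,b'$ cross term directly, whereas you compute the full product at once and then read off the bimodule description by specialisation.
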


\begin{proof}
It follows from \S \ref{sec: R_module_structure} and Theorem \ref{thm: double_coset} that the summand $A^{S_{2}}$ is a subalgebra with product equal to that restricted from $A$, and the summand $A$ is an $A^{H}$-bimodule with actions given by \eqref{eqn: example_S3_bimodule_A}.
It remains to calculate the product of two elements in the summand $A$.
Suppose that $b,b' \in A$ and $\phi,\phi' \in \HGHASym$ are the corresponding $S_{2}$-invariant functions defined by
\[
\phi(S_2) = \phi'(S_2) = 0
\]
and
\begin{align*}
\phi( (23)S_2 ) = b \; , & \; \phi( (13)S_2 ) = \alpha_{(13)} b \\
\phi'( (23)S_2 ) = b' \; , & \; \phi'( (13)S_2 ) = \alpha_{(13)} b' .
\end{align*}
Using the formula \eqref{eqn: skew_hecke_product} for the convolution product on $\HGHASym$ gives
\begin{align*}
( \phi * \phi' ) ( S_2 ) & = \phi(S_2) \phi'(S_2) + \phi( (23)S_2 ) \alpha_{(23)} \phi'( (23)S_2 ) + \phi( (13) S_2 ) \alpha_{(13)} \phi'( (13)S_2 ) \\
& = b \alpha_{(23)} b' + (\alpha_{(12)} b) \alpha_{(13)} (\alpha_{(12)} b') \\
& = b \alpha_{(23)} b' + (\alpha_{(12)} b) \alpha_{(12)} (\alpha_{(23)} b') \\
& = b \alpha_{(23)} b' + \alpha_{(12)} ( b \alpha_{(23)} b' ) \\
& = \left( \mathrm{id} + \alpha_{(12)} \right) ( b \alpha_{(23)} b' )
\end{align*}
where $\mathrm{id} + \alpha_{(12)}$ is the sum in $\End_{R} A$ of the identity endomorphism and the automorphism $\alpha_{(12)}$, and we used the identity $(13)(12) = (12)(23)$ in $S_3$.
Similarly,
\begin{align*}
( \phi * \psi ) ( (23)S_2 ) & = \phi( S_2 ) \phi'( (23)S_2 ) + \phi( (23)S_2 ) \alpha_{(23)} \phi'( S_2 ) + \phi( (13)S_2 ) \alpha_{(13)} \phi'( (23)S_2 ) \\
& = ( \alpha_{(12)} b ) ( \alpha_{(13)} b' ) .
\end{align*}
These computations show that the product $(0,b) (0,b')$ is equal to \eqref{eqn: example_S3_product_bb'}.
\end{proof}

\begin{remi}
If $A = R$ and $\alpha$ is the trivial action then $\HGHASym = \HGHSym$, and the formula \eqref{eqn: example_S3_skew_hecke_product} reduces to
\[
\begin{pmatrix}
r \\
s
\end{pmatrix}
\begin{pmatrix}
r' \\
s'
\end{pmatrix}
 = 
\begin{pmatrix}
rr' +  2ss' \\
rs' + sr' + ss'
\end{pmatrix}
\]
which agrees with the formula \cite[Lem.\ 5.1]{KriegBook1990} for the product in $\HGHSym$.
\end{remi}

\subsubsection{Fixed point algebras}
By Theorem \ref{thm: fixed_points} there is an $R$-algebra isomorphism
\begin{align}
\label{eqn: fixed_point_isom_S3}
\HGHASym & \xrightarrow{\cong} \left( A \otimes \End_{R} (R[S_{3}/S_{2}]) \right)^{S_3} \\
\nonumber
\phi & \mapsto \sum_{gS_2,kS_2 \in S_3/S_2} \alpha_{k} \phi(k^{-1}gS_2) \otimes E_{gS_2,kS_2} .
\end{align}
The labelling $C_1 = S_2$, $C_2 = (23)S_2$, $C_3 = (13)S_2$ identifies $\End_{R}(R[G/H])$ with $M_{n}(R)$, and $A \otimes \End_{R} (R[S_{3}/S_{2}])$ with $M_{n}(A)$.
Set $g_{1} := 1$, $g_{2} := (23)$, $g_{3} := (13)$.
Then the isomorphism \eqref{eqn: fixed_point_isom_S3} corresponds to the map
\begin{align}
\label{eqn: fixed_point_isom_S3_matrix}
\HGHASym & \xrightarrow{\cong} M_{3}(A)^{S_3} \\
\nonumber
\phi & \mapsto \left( \alpha_{g_j} \phi( g_{j}^{-1}C_{i} ) \right)_{1 \le i,j \le 3}
\end{align}
If $\phi \in \HGHASym$ and $a := \phi(C_1)$, $b := \phi(C_2)$, $c := \phi(C_3)$; then by \S \ref{sec: example_S3_Rmodule} we have $c = \alpha_{(12)} b$, and
the isomorphism \eqref{eqn: fixed_point_isom_S3_matrix} maps $\phi$ to the matrix
\begin{align*}
\begin{pmatrix}
\phi(C_1) & \alpha_{(23)}\phi((23)C_1) & \alpha_{(13)} \phi((13)C_1) \\ 
\phi(C_2) & \alpha_{(23)}\phi((23)C_2) & \alpha_{(13)} \phi((13)C_2) \\ 
\phi(C_3) & \alpha_{(23)}\phi((23)C_3) & \alpha_{(13)} \phi((13)C_3)
\end{pmatrix}
& = 
\begin{pmatrix}
a & \alpha_{(23)} b & \alpha_{(13)} c \\
b & \alpha_{(23)} a & \alpha_{(13)} b \\
c & \alpha_{(23)} c & \alpha_{(13)} a
\end{pmatrix}  
\\
& = 
\begin{pmatrix}
a & \alpha_{(23)} b & \alpha_{(13)(12)} b \\
b & \alpha_{(23)} a & \alpha_{(13)} b \\
c & \alpha_{(23)(12)} b & \alpha_{(13)} a
\end{pmatrix} 
\end{align*}
and the restriction of \eqref{eqn: fixed_point_isom_S3_matrix} to the subalgebra $A^{S_2} \subseteq \HGHASym$ is
\[
d \mapsto \begin{pmatrix}
d & 0 & 0 \\
0 & \alpha_{(23)} d & 0 \\
0 & 0 & \alpha_{(13)} d
\end{pmatrix} .
\]

\subsubsection{Corner rings}
If $|S_2| = 2$ is a unit in $A$, e.g.\ if $R$ is a field or characteristic not equal to $2$, then
\[
\mathbf{e}_{S_2} = \frac{1}{2} \sum_{g \in S_2} 1_{A} \otimes g = \frac{1}{2} ( 1_{A} \otimes 1_{G} + 1_{A} \otimes (12) )
\]
is an idempotent in the skew group algebra $A \rtimes S_3$, and by Theorem \ref{prop:skew_hecke_corner} there is an isomorphism of $R$-algebras
\[
\HGHASym \cong \mathbf{e}_{S_2} ( A \rtimes S_{3} ) \mathbf{e}_{S_2} .
\]

% >>>

% subsec: polynomial rings <<<
\subsection{Polynomial algebras}
\label{sec: polynomial_rings}
We now specialise to the case where
\[
A = R[x_1 , x_2 , x_3]
\]
is a polynomial algebra over $R$, and $S_3$ acts on $A$ by permuting the variables $x_1,x_2,x_3$.
For $f \in R[x_1,x_2,x_3]$ we will sometimes write $f(x_1,x_2,x_3)$ for $f$, and set
\[
f(x_2,x_1,x_3) := \alpha_{(12)} f \; , \;  f(x_1,x_3,x_2) := \alpha_{(23)} f \; , \ f(x_3,x_2,x_1) := \alpha_{(13)} f 
\]
though we do not think of $f$ as a function.
By the fundamental theorem on elementary symmetric polynomials
\[
A^{S_2} = R[ x_1 + x_2 \, , \,  x_1x_2 \, , \,  x_3 ] , 
\]
which is itself a polynomial algebra in 3 variables.
The $R$-module isomorphism \eqref{eqn: example_S3_R_module_isom} is an isomorphism from $\HGHASym$ to the $R$-module
\[
A^{S_2} \oplus A = R[x_1 + x_2 \, , \, x_1x_2 \, , \, x_3 ] \oplus R[ x_1 , x_2 , x_3 ] .
\]
By Proposition \ref{prop: example_S3_skew_hecke_product} the product on $A^{S_2} \oplus A$ is determined by the following:
\begin{enumerate}
\item The summand $R[ x_1 + x_2 \, , \, x_1x_2 \, , \, x_3 ]$ is a subalgebra.
\item The summand $R[ x_1,x_2,x_3 ]$ is an $R[x_1 + x_2 \, , \, x_1x_2 \, , \, x_3]$-bimodule with actions
\[
f \cdot h \cdot f' = f h (\alpha_{(23)} f') = f(x_1,x_2,x_3) h(x_1,x_2,x_3) f'(x_1,x_3,x_2)
\]
for $f,f' \in A^{S_2}$ and $h \in A$.
\item The product of $h,h' \in R[x_1,x_2,x_3]$ is equal to
\[
\begin{pmatrix}
h(x_1,x_2,x_3) h'(x_1,x_3,x_2) + h(x_2,x_1,x_3) h'(x_3,x_1,x_2) \\
h(x_2,x_1,x_3) h'(x_3,x_2,x_1) .
\end{pmatrix} .
\]
\end{enumerate}
The isomorphism \eqref{eqn: fixed_point_isom_S3_matrix} maps $(f,h) \in A^{S_2} \oplus A$ to the matrix
\[
\begin{pmatrix}
f(x_1,x_2,x_3) & h(x_1,x_3,x_2) & h(x_3,x_1,x_2) \\
h(x_1,x_2,x_3) & f(x_1,x_3,x_2) & h(x_3,x_2,x_1) \\
h(x_2,x_1,x_3) & h(x_2,x_3,x_1) & f(x_3,x_2,x_1) 
\end{pmatrix} .
\]
The action $\alpha$ preserves the natural grading on $R[x_1,x_2,x_3]$ defined by setting the degree of each of $x_1,x_2,x_3$ to be equal to 1.
It follows from \S \ref{sec: grad_filt} that $\HGHA$ is graded, with $\HGHA_{i}$ consisting of elements
\[
f \otimes S_2 + h \otimes (23)S_2 + \alpha_{(12)} h \otimes (13)S_2
\]
such that $f \in R[x_1,x_2,x_3]^{S_2}_{i}$ and $h \in R[x_1,x_2,x_3]_{i}$.
% >>>

% subsec: functions on S3 <<<
\subsection{Functions on $S_3$}
\label{sec: example_functions_on_S3}
Let $A = RS_3$, the commutative $R$-algebra of $R$-valued functions on $S_3$, equipped with the action of $S_3$ given by $( \alpha_{g} f ) (k) = f (g^{-1}k)$.
By Proposition \ref{prop: stone} there is an $R$-algebra isomorphism
\[
\mathcal{H}_{R}(S_3,S_2,RS_3,\alpha) \cong \End_{R} ( \Ind_{S_2}^{S_3} R ) \cong M_{2}(R)
\]
because $\Ind_{S_2}^{S_3} R \cong R[S_3/S_2]$ is a free $R$-module of rank 2.
% >>>

% subsec: example semidirect products <<<
\subsection{Semidirect products}
\label{sec: example_semidirect}
Let $K$ be a finite group and $\alpha$ be an action of $S_3$ on $K$ by group automorphisms.
Denote by the same symbol $\alpha$ the corresponding action of $S_3$ on $R[K]$.
It follows from Proposition \ref{prop: skew_hecke_semi_direct} that there is an $R$-algebra isomorphism
\[
\mathcal{H}_{R}(S_3,S_2,R[K],\alpha) \cong \mathcal{H}_{R}( K \rtimes S_{3} , S_2 ) .
\]
By Theorem \ref{thm: fixed_points} (or the statement for non-skew Hecke algebras \cite[Chap.\ I, Thm.\ 4.8]{KriegBook1990}) the $R$-algebra $\mathcal{H}_{R}( K \rtimes S_{3} , S_{2} )$ is isomorphic to the endomorphism algebra of the $R[K \rtimes S_3]$-module $\Ind_{S_2}^{K \rtimes S_3} R \cong \Ind_{S_3}^{K \rtimes S_3} \Ind_{S_2}^{S_3} R$.
\\
\\
For example, suppose that $L$ is a finite group, $K = L^{3}$, and $\alpha$ is the action of $S_3$ on $L^{3}$ given by permuting the factors.
Then $S_3$ acts on $R[L^{3}] \cong R[L]^{\otimes 3}$ by permuting the tensor factors, $K^{3} \rtimes S_3 = L \wr S_3$ is a wreath product group, and
\[
\mathcal{H}_{R}(S_3,S_2,R[L]^{\otimes 3},\alpha) \cong \mathcal{H}_{R}( L \wr S_{3} , S_2 ) \cong \End_{R[G]} ( \Ind_{S_2}^{L \wr S_3} R ) .
\]
% >>>

% >>>

% bibliography <<<
\bibliography{skewhecke}{}
\bibliographystyle{amsalpha}

% >>>

\end{document}